\documentclass[11pt]{article}
\usepackage{times,amsmath,amsthm,amssymb,graphicx,hyperref,float,xcolor}
\usepackage{fancyhdr}     
\usepackage{setspace}    
\usepackage{booktabs}    
\usepackage{longtable}   
\usepackage{multirow}
\usepackage{textcomp}    
\usepackage{bm}         
\usepackage{pdfpages}    
%
%
\setlength{\textwidth}{6.9in}
\setlength{\textheight}{8.8in}
\setlength{\oddsidemargin}{-0.25in}
\setlength{\evensidemargin}{-0pt}
\setlength{\topmargin}{-0.25in}
\setlength{\columnsep}{0.4in}
\setlength{\parindent}{4ex}
\newtheorem{defn}{Definition}
\newtheorem{prpstn}{Proposition}
\newtheorem{nt}{Note}
\newtheorem{rmrk}{Remark}

\newtheorem{lmm}{Lemma}
\newtheorem{thrm}{Theorem}
\newtheorem{xmpl}{Example}

\newcommand{\R}{\mathbb{R}}
\newcommand{\C}{\mathbb{C}}

\newcommand{\ds}{\displaystyle}

\newcommand{\bx}{\mathbf{x}}

\newcommand{\bbeta}{\bm{\beta}}
\newcommand{\bxi}{\bm{\xi}}
%
%
%

%
%
%
\begin{document}
\global\def\refname{{\normalsize \it References:}}
\baselineskip 12.5pt
%
%
%
\title{\LARGE \bf Regularization techniques for estimating the source in a complete parabolic equation in $\R^n$}

\date{}

\author{\hspace*{-10pt}
\begin{minipage}[t]{2.3in} \normalsize \baselineskip 12.5pt
\centerline{GUILLERMO FEDERICO UMBRICHT}
\centerline{Departamento de Matem\'atica, Facultad de Ciencias Empresariales, Universidad Austral}
\centerline{Paraguay 1950, Rosario, Santa Fe, ARGENTINA}
\centerline{Consejo Nacional de Investigaciones Cient{\'i}ficas y T\'ecnicas (CONICET)}
\centerline{Godoy Cruz 2290, CABA, ARGENTINA}
\vspace{0.5cm}
\centerline{DIANA RUBIO}
\centerline{ITECA (UNSAM-CONICET), CEDEMA, ECyT, Universidad
Nacional de General San Mart\'in}
\centerline{25 de mayo y Francia, San Mart\'in, Buenos Aires, ARGENTINA}
\end{minipage}
%
%
\\ \\ \hspace*{-10pt}
\begin{minipage}[b]{6.9in} \normalsize
\baselineskip 12.5pt {\it Abstract:}
In this article, the problem of identifying the source term in transport processes given by a complete parabolic equation is studied mathematically from noisy measurements taken at an arbitrary fixed time. The problem is solved analytically with Fourier techniques and it is shown that this solution is not stable. 
Three single parameter families of regularization operators are proposed to dealt with the instability of the solution.
Each of them is designed to compensate the factor that causes the instability of the inverse operator.  Moreover, a rule of  choice for the regularization parameter is included and a  H\"older error bound type is
obtained for each estimation. Numerical examples of different characteristics are presented to demonstrate the benefits of the proposed strategies. 
\\ [4mm] {\it Key--Words:}
Transfer problems, Parabolic equation, Inverse problems, Ill-posed problems, Regularization operators.\\
{\it AMS Subject Classification:}
35R30; 35R25; 47A52; 58J35; 65T50.
\end{minipage}
\vspace{-10pt}}

\maketitle

\thispagestyle{empty} \pagestyle{empty}
%
%
\section{Introduction}
\label{s:1} \vspace{-4pt}

Complete parabolic equations are used to model transport phenomena. These are highly studied due to the number of direct applications they are related to in different fields of physics, as they are used to analyze the transfer of mass, energy, and information in different processes \cite{Bird02, Hangos01}. Among the most important applications, we can mention the heat transfer of a body immersed in a moving fluid \cite{Umbricht21d}, heat transfer in biological tissue \cite{Pen48}, and the concentration of a certain compound in a flow of water \cite{Basha93}.

On the other hand, the problem of source determination has been analyzed in recent years in various areas of applied physics and has garnered considerable attention in current research. It finds applications in fields such as heat conduction \cite{Alifa75, Elden00, Umbricht19d, Zhao11}, crack identification \cite{Zeng96}, electromagnetic theory \cite{Banks15, Ciaret10, Elbadia00}, geophysical prospecting \cite{Beroza88}, contaminant detection \cite{Li06}, and detection of tumor cells \cite{Macleod99}, among others.

Several methods are available in the literature for determining a source, and among the most important tools are the potential logarithmic method \cite{Ohe94}, the projective method \cite{Nara03}, Green functions \cite{Hon10}, limit element methods of dual reciprocity \cite{Farcas03}, the method of dual reciprocity \cite{Sun97}, and the method of fundamental solution \cite{Jin07}.

In the context of various transport processes or complete parabolic differential equations, there is a limited number of articles published for the general case \cite{Umbricht19a, Umbricht19b, Umbricht21}, with the majority of available literature focusing on the one-dimensional heat equation. Different methods, techniques, and strategies have been employed to retrieve sources in this equation, as seen in \cite{Farcas06, Ahmadabadi09, Johansson07}. Many articles analyze specific cases with simplifications or restrictions in mathematical models, source types, boundary conditions, or selected domains, such as \cite{Farcas06, Ahmadabadi09, Johansson07, Liu09, Savateev95, Cannon98, Yan08}. The most commonly used methods in these cases include the limit element method \cite{Farcas06, Johansson07}, the fundamental solution method \cite{Ahmadabadi09, Yan08}, the Ritz-Galerkin method \cite{Ras11}, the finite difference method \cite{Yan10}, the meshless method \cite{Yan09}, the conditional stability method \cite{Yama93}, and the shooting method \cite{Liu09}.

The problem of source determination is considered ill-posed in the sense of Hadamard \cite{Hada24}, as the solution does not depend continuously on the data. Regularization methods \cite{Engl96, Kirsch11} play a crucial role in estimating unstable solutions. Among the most commonly used are the iterative regularization method \cite{Honchbruck09, Johansson08}, the simplified Tikhonov regularization method \cite{Fu04, Cheng07, Cheng08, Natterer84, Yang10b}, the modified regularization method \cite{Umbricht19d, Umbricht21, Yang10b, Yang10, Yang11, Zhao14}, Fourier truncation \cite{Yang11}, and the mollification method \cite{Yang14}.

When it comes to source determination in parabolic equations, \cite{Sivergina2003} focuses on a convection-diffusion equation, while \cite{Elden00, Yan10, Yang10, Zhao14, Dou09, Dou09b, Martin96, Trong05, Trong06} exclusively consider diffusion. More recently, the problem of finding the source term in a complete parabolic equation has been addressed using the quasi-reversibility method, as seen in \cite{Li19}. Notably, this method extends its applicability to solving the inverse source problem in nonlinear parabolic equations \cite{Le20}.

The issue addressed in this article stems from modeling various transportation problems and involves identifying the source term, which is independent of time, in a complete parabolic evolutionary equation using measurements or noisy data taken at an arbitrary fixed time. This problem is ill-posed, as high-frequency components of arbitrarily small data errors can result in disproportionately large errors in the solution. To tackle this challenge, three uniparametric families of regularization operators are devised to counteract the instability of the inverse operator. These regularization operators create families of well-posed problems that approximate the originally ill-posed problem. Additionally, a guideline is provided for selecting the regularization parameter. The stability and convergence of each method are analyzed, and an optimal H"older bound is derived for the error of each estimate.

The three strategies presented here generalize the ideas used by other authors for the case of the one-dimensional heat equation. Here, they are applied to a complete parabolic equation where temperature measurements can be taken at any instant. On the other hand, the proposal is framed within the theory of operators, giving rise to applications in more general contexts and problems.
Any important application is the detection of contaminants in groundwater layers, this is a problem that concerns all urbanized cities. Being able to determine the focus of contamination from measurements in a particular place, minimizes the costs used for the search \cite{Li06,Das17b,Sun96}.
Another application is the estimation of the metabolic heat in a biological tissue \cite{Umbricht19b} using the Pennes model \cite{Pen48}.
Any existing abnormality within the body can lead to variations in temperature and heat flux at the surface. The presence of a tumor produces local inflammation, increased metabolic activity,
  among other symptoms. Due to this, the diseased cells act as a source of temperature that produces abnormal thermal profiles in the skin and its measurements can be used to identify, locate and characterize the sick cells.
   The two problems mentioned above can be addressed with the tools that are explained in this paper.
There are many other applications in different disciplines, these two are mentioned as an example to highlight the importance of the problem and the proposed methods.

To illustrate the performance of the proposed regularizations and in order to compare the different regularization operators introduced in this article; numerical examples of different characteristics are included.
\section{Source identification}

In this section, the inverse problem to be studied is formally presented, an analytical expression is given for the solution of the problem of interest, and it is shown that the problem is ill-posed.

\subsection{Presentation of the problem} 

Given $u:\R^n \times \R^+\longrightarrow \R$, we seek to determine the source term in the following complete parabolic equation in an unbounded domain

\begin{equation}
\dfrac{\partial u}{\partial t}({\bx},t)=\alpha^2 \Delta u(\bx,t)-\bbeta \cdot \nabla (u(\bx,t))-\nu u(\bx,t)+f(\bx), \qquad  \bx \in \R^{n}, \,\,\, t>0,
\label{Ec_1_Mod}
\end{equation}
where $\Delta$,$\nabla$ denote the Laplacian and Nabla operators, respectively, and $``\cdot"$ represents the usual inner product in $\R^{n}$.
Without loss of generality, the null initial condition is considered, i.e.,

\begin{equation}
\label{C_I}
u(\bx,t)=0, \qquad  \bx \in \R^{n}, \,\,\, t=0.  
\end{equation}

The determination of the source term in Equation \eqref{Ec_1_Mod} with the condition \eqref{C_I}, is carried out using experimental or simulated noisy data, at an instant of time $t_0$. 

\begin{equation}
\label{Med_Ruidosas}
u(\bx,t)=y_{\delta}(\bx), \qquad  \bx \in \R^{n}, \,\,\, t=t_0>0,  
\end{equation}
where $y_{\delta}$ represents the noisy data or measurements and $\delta$ is the noise in the data. It is also assumed that said noise is bounded, that is,

\begin{equation}
\label{noiselevel}
    ||y(\bx)-y_{\delta }(\bx)||_{L^2(\R^{n})}\le \delta, \qquad 0<\delta\le \delta_M,\\
\end{equation} 
where $\delta_M \in \R^{+} $ is the maximum noise level tolerated. In practice $\delta_M$ is obtained from measurement, instrumentation and calibration errors.

\subsection{Problem solution}

The source estimation problem will be solved using the $n$-dimensional Fourier transform. It is included here for completeness reasons.

\begin{defn}\label{DefTransf} 

 Let $g \in L^2(\R^n)$, the $n$-dimensional Fourier transform \cite{Marks09} is defined by
\begin{equation*}
\widehat{g}(\bxi) :=\left(\dfrac{1}{\sqrt{2\pi}}\right)^n \ds\int\limits_{\R^n}{e^{-i \bxi \cdot \bx} \, g (\bx) \,  d\bx}, \qquad   \bxi \in \R^n.
\end{equation*}

Let $\widehat{g} \in L^2(\R^n)$, the Fourier antitransform is defined by
\begin{equation*}
\label{defantitransfn}
g(\bx) :=\left(\dfrac{1}{\sqrt{2\pi}}\right)^n \ds\int\limits_{\R^n}{e^{i \bxi \cdot \bx} \, \widehat{g} (\bxi) \,  d\bxi}, \qquad   \bx \in \R^n.
\end{equation*}
\end{defn}

\begin{prpstn} \label{Prop_gradiente_y_Laplaciano}
The $n$-dimensional Fourier transform is a linear integral operator that satisfies the following property.
Let $g \in L^2(\R^n)$ hence:

\begin{equation*} 
\widehat{\nabla g}(\bxi)= i \, \bxi \, \widehat{g}(\bxi), \qquad \widehat{\Delta g}(\bxi)=- \left\|\bxi\right\|^2 \, \widehat{g}(\bxi), \qquad \bxi \in \R^n.
\end{equation*} 
\end{prpstn}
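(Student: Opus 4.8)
The plan is to establish both identities directly from the definition of the transform together with integration by parts, using linearity to reduce everything to the behaviour of a single first-order partial derivative. Linearity of $\widehat{\,\cdot\,}$ is immediate, since by Definition~\ref{DefTransf} the transform is an integral against the fixed kernel $e^{-i\bxi\cdot\bx}$ and integration is linear; this is precisely what lets me treat the gradient and the Laplacian componentwise.

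First I would compute the transform of a single partial derivative. Fixing a coordinate direction $x_j$ and writing out $\widehat{\p g/\p x_j}(\bxi)$ from the definition, I integrate by parts in the variable $x_j$ while holding the remaining $n-1$ variables fixed. The boundary contribution vanishes because $g$ decays at infinity, so the derivative is transferred onto the exponential kernel. Since $\p_{x_j} e^{-i\bxi\cdot\bx} = -i\xi_j\, e^{-i\bxi\cdot\bx}$, the minus sign from integration by parts and the minus sign from differentiating the exponential combine to produce exactly the factor $i\xi_j$, giving $\widehat{\p g/\p x_j}(\bxi) = i\xi_j\,\widehat{g}(\bxi)$. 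Assembling these $n$ scalar identities into a vector yields the gradient formula $\widehat{\nabla g}(\bxi) = i\,\bxi\,\widehat{g}(\bxi)$.

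For the Laplacian I would simply iterate the first-order rule. Writing $\Delta = \sum_{j=1}^{n} \p^2/\p x_j^2$ and applying the identity above twice in each direction produces a factor $(i\xi_j)^2 = -\xi_j^2$ per direction; summing over $j$ and using $\sum_{j=1}^{n} \xi_j^2 = \|\bxi\|^2$ gives $\widehat{\Delta g}(\bxi) = -\|\bxi\|^2\,\widehat{g}(\bxi)$, as claimed.

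The only genuinely delicate point is the vanishing of the boundary terms in the integration by parts: the stated hypothesis is merely $g \in L^2(\R^n)$, which by itself does not guarantee pointwise decay of $g$ or of its derivatives. The clean way to resolve this is to prove the identities first on the Schwartz class $\mathcal{S}(\R^n)$, where all integrations by parts are fully justified and every boundary term vanishes, and then to extend to the relevant Sobolev space by a density argument together with the continuity of the Fourier transform on $L^2(\R^n)$. In the present context it suffices to assume that $g$ is smooth enough and that $g$ together with its first and second derivatives is integrable, so that the manipulations are legitimate; I expect this regularity to be implicit, and the main effort to lie in the careful bookkeeping of the two sign changes rather than in any deep estimate.
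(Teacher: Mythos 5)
Your argument is correct and is the standard one: linearity plus integration by parts in each coordinate (the sign from transferring the derivative onto the kernel combining with $\p_{x_j}e^{-i\bxi\cdot\bx}=-i\xi_j e^{-i\bxi\cdot\bx}$ to give $i\xi_j$), iterated twice and summed for the Laplacian, with the boundary terms justified by proving the identities on the Schwartz class and extending by density together with Plancherel continuity. The paper itself offers no proof of this proposition --- it is stated as a known property of the Fourier transform --- so there is nothing to compare against; if anything, your remark that the bare hypothesis $g \in L^2(\R^n)$ does not by itself make the identities meaningful (one needs $g$ in the appropriate Sobolev space, with the derivatives read distributionally) is a point of care that the paper's statement glosses over.
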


From the use of the definition \ref{DefTransf} and the proposition \label{PropNablayLap} it is possible to find the analytical solution of the problem of interest given by the equations \eqref{Ec_1_Mod}-\eqref{noiselevel}. This is given in the following result.

\begin{thrm}[Solving the source identification problem] \label{teorema_Fuente_Inversa}
For $\bbeta \in \R^{n}$ and $\alpha^2,\nu, t_0, \delta, \delta_M \in \R^+$ such that $\delta<\delta_M $, are the functions $u(\cdot, t), f(\cdot), y(\cdot), y_\delta(\cdot) \in L^2(\R^n)$ with $||y-y_{\delta}||_{L^2(\R^n)}\le \delta$ satisfying the parabolic problem

\begin{equation}
\label{transpeqn}
\begin{cases} 
\dfrac{\partial u}{\partial t}({\bx},t)=\alpha^2 \Delta u(\bx,t)-\bbeta \cdot \nabla (u(\bx,t))-\nu u(\bx,t)+f(\bx), \quad & \bx \in \R^{n}, \,\,\, t>0, \\
u(\bx,t)=0, \quad  & \bx \in \R^{n}, \,\,\, t=0, \\
u(\bx,t)=y_{\delta}(\bx), \quad & \bx \in \R^n, \,\,\, t= t_0>0. 
\end{cases}
\end{equation}

Then, the expression for the source is given by

\begin{equation*}
f_{\delta}(\bx) =\left(\dfrac{1}{\sqrt{2\pi}}\right)^n \ds\int\limits_{\R^n} e^{i \bxi \cdot \bx} \Lambda (\bxi) \left[\left(\dfrac{1}{\sqrt{2\pi}}\right)^n \ds\int\limits_{\R^n} e^{-i \bxi \cdot \bx} y_{\delta}(\bx)d\bx\right]d\bxi,
\end{equation*}
where

\begin{equation*}
\Lambda (\bxi )=\dfrac{z(\bxi)}{1-e^{-z(\bxi)\, t_0}},
\end{equation*}
with

\begin{equation*}
z(\bxi)=\alpha^2 \left\|\bxi\right\|^2 + i\,\bbeta \cdot \bxi + \nu \, \in \C.
\end{equation*}
\end{thrm}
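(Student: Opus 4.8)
The plan is to pass to Fourier space in the spatial variable $\bx$, where the differential operators collapse to algebraic multipliers and the partial differential equation becomes an ordinary differential equation in $t$. First I would apply the $n$-dimensional Fourier transform of Definition \ref{DefTransf} to the first line of \eqref{transpeqn}. Using Proposition \ref{Prop_gradiente_y_Laplaciano} to replace $\widehat{\nabla u}$ by $i\,\bxi\,\widehat{u}$ and $\widehat{\Delta u}$ by $-\left\|\bxi\right\|^2\widehat{u}$, and interchanging the time derivative with the (spatial) transform, the equation turns into
\begin{equation*}
\frac{\partial \widehat{u}}{\partial t}(\bxi,t) = -\left(\alpha^2\left\|\bxi\right\|^2 + i\,\bbeta\cdot\bxi + \nu\right)\widehat{u}(\bxi,t) + \widehat{f}(\bxi) = -z(\bxi)\,\widehat{u}(\bxi,t) + \widehat{f}(\bxi),
\end{equation*}
where for each fixed $\bxi$ the forcing term $\widehat{f}(\bxi)$ is a constant, since $f=f(\bx)$ does not depend on $t$.

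Second, I would solve this first-order linear ODE in $t$. Multiplying by the integrating factor $e^{z(\bxi)t}$ and integrating from $0$ to $t$, the null initial condition $\widehat{u}(\bxi,0)=0$ yields
\begin{equation*}
\widehat{u}(\bxi,t) = \widehat{f}(\bxi)\,\frac{1-e^{-z(\bxi)\,t}}{z(\bxi)}.
\end{equation*}
Evaluating at $t=t_0$ and imposing the measurement condition $\widehat{u}(\bxi,t_0)=\widehat{y_\delta}(\bxi)$, I would solve the resulting algebraic relation for $\widehat{f}$, obtaining $\widehat{f}(\bxi)=\Lambda(\bxi)\,\widehat{y_\delta}(\bxi)$ with $\Lambda(\bxi)=z(\bxi)/(1-e^{-z(\bxi)t_0})$. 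Applying the Fourier antitransform of Definition \ref{DefTransf} and writing $\widehat{y_\delta}$ explicitly as its defining integral then gives exactly the claimed double-integral expression for $f_\delta$.

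The main point requiring care is that $\Lambda$ is well defined, that is, that the denominator $1-e^{-z(\bxi)t_0}$ never vanishes. This follows from the hypotheses $\alpha^2,\nu\in\R^+$: the real part of $z$ is $\operatorname{Re}z(\bxi)=\alpha^2\left\|\bxi\right\|^2+\nu\ge\nu>0$, so that $\left|e^{-z(\bxi)t_0}\right|=e^{-(\alpha^2\|\bxi\|^2+\nu)t_0}\le e^{-\nu t_0}<1$ for every $\bxi\in\R^n$; hence $1-e^{-z(\bxi)t_0}\neq 0$ and the division is legitimate. The same estimate also reveals the source of the ill-posedness flagged in the statement: since $\operatorname{Re}z(\bxi)\to\infty$ as $\|\bxi\|\to\infty$, the multiplier $\Lambda(\bxi)$ grows like $z(\bxi)\sim\alpha^2\|\bxi\|^2$, so the inverse operator $y_\delta\mapsto f_\delta$ is unbounded on $L^2(\R^n)$ --- which is precisely what motivates the regularization families introduced afterward.
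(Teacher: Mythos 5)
Your proposal is correct and follows essentially the same route as the paper's proof: Fourier transform in space, reduction to a first-order linear ODE in $t$ solved with the null initial condition, evaluation at $t=t_0$, and inversion of the multiplier $\Lambda$. Your explicit verification that $1-e^{-z(\bxi)t_0}\neq 0$ (since $\operatorname{Re}z(\bxi)\ge\nu>0$) is a worthwhile detail that the paper's proof leaves implicit.
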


\begin{proof}

For the proof of theorem \ref{teorema_Fuente_Inversa} it is used the $n$-dimensional Fourier transform given in \ref{DefTransf} on the space variables of the system \eqref{transpeqn}. Then the proposition \ref{Prop_gradiente_y_Laplaciano} is used to obtain,

\begin{equation}
\label{transpeqn2}
\begin{cases} 
\widehat{\dfrac{\partial u}{\partial t}}({\bxi},t)=-(\alpha^2 \left\|\bxi\right\|^2 +i \bbeta \cdot \bxi + \nu) \, \widehat{u}(\bxi,t) + \widehat{f}(\bxi), \qquad & \bxi \in \R^{n}, \,\,\, t>0, \\
\widehat{u}(\bxi,t)=0, \qquad  & \bxi \in \R^{n}, \,\,\, t=0, \\
\widehat{u}(\bxi,t)=\widehat{y_{\delta}}(\bxi), \qquad & \bxi \in \R^n, \,\,\, t= t_0>0, 
\end{cases}
\end{equation}
where $\bxi$ is the $n$-dimensional Fourier variable. Equivalently, the identification problem \eqref{transpeqn} can be rewritten fron \eqref{transpeqn2} in frequency space as follows

\begin{equation}
\label{transpeqn3}
\begin{cases} 
\widehat{\dfrac{\partial u}{\partial t}}({\bxi},t)=-z(\bxi) \, \widehat{u}(\bxi,t) + \widehat{f}(\bxi), \qquad & \bxi \in \R^{n}, \,\,\, t>0, \\
\widehat{u}(\bxi,t)=0, \qquad  & \bxi \in \R^{n}, \,\,\, t=0, \\
\widehat{u}(\bxi,t)=\widehat{y_{\delta}}(\bxi), \qquad & \bxi \in \R^n, \,\,\, t= t_0>0, 
\end{cases}
\end{equation}
where $z(\bxi)$ is given by the expression,

\begin{equation}
\label{z}
z(\bxi)=\alpha^2 \left\|\bxi\right\|^2 + i\,\bbeta \cdot \bxi + \nu.
\end{equation}

The system \eqref{transpeqn3} is represented by a first order non-homogeneous  differential equation with initial condition. The analytical solution to this equation is, 

\begin{equation}
\label{solutionu}
\widehat{u}(\bxi,t) = \dfrac{1-e^{-z(\bxi)\, t}}{z(\bxi)}\widehat{f}(\bxi).
\end{equation}

Because $\widehat{u}(\bxi,t_0)=\widehat{y_{\delta}}(\bxi)$, evaluating Equation \eqref{solutionu} at $t=t_0$ produces a linear expression for the source in frequency space, 

\begin{equation}
\label{illf}
\widehat{f_{\delta}}(\bxi )=\Lambda (\bxi )\widehat{y_{\delta}}(\bxi ),
\end{equation}
where

\begin{equation}
\label{Lambda}
\Lambda (\bxi )=\dfrac{z(\bxi)}{1-e^{-z(\bxi)\, t_0}}.
\end{equation}

Equivalently, using the definition \ref{DefTransf} (of the Fourier antitransform) results from the Equation\eqref{illf}
 
\begin{equation}
\label{solucion_fuente}
f_{\delta}(\bx) =\left(\dfrac{1}{\sqrt{2\pi}}\right)^n \ds\int\limits_{\R^n} e^{i \bxi \cdot \bx} \Lambda (\bxi) \left[\left(\dfrac{1}{\sqrt{2\pi}}\right)^n \ds\int\limits_{\R^n} e^{-i \bxi \cdot \bx} y_{\delta}(\bx)d\bx\right]d\bxi,
\end{equation}

which ends the proof.
\end{proof}

\subsection{Ill-posed problem}

The problem of identifying the source in a complete parabolic equation from noisy measurements turns out to be a ill-posed problem in the sense of Hadamard \cite{Hada24} since the solution does not depend continuously on the data. This fact can be seen in the following result.

\begin{thrm} [The problem is ill-posed]\label{Teo_Prob_mal_Planteado}

Under the assumptions used so far, the identification problem \eqref{transpeqn} given in Theorem \ref{teorema_Fuente_Inversa}. It is an ill-posed problem in Hadamard's sense, since the solutions do not vary continuously with the data.
\end{thrm}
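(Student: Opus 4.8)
The plan is to demonstrate that continuous dependence on the data---the third of Hadamard's conditions---fails, by producing a sequence of data perturbations that vanish in $L^2(\R^n)$ yet whose associated source perturbations diverge. Because the $n$-dimensional Fourier transform of Definition \ref{DefTransf} is an isometry on $L^2(\R^n)$ (Plancherel), and because the solution operator acts in frequency space simply as multiplication by the factor $\Lambda(\bxi)$ of Equation \eqref{Lambda}, the entire argument can be carried out on the multiplier. The first step is to estimate $|\Lambda(\bxi)|$ for large $\|\bxi\|$. From Equation \eqref{z} the real part $\operatorname{Re} z(\bxi) = \alpha^2\|\bxi\|^2 + \nu$ is strictly positive, so $|e^{-z(\bxi)t_0}| = e^{-(\alpha^2\|\bxi\|^2+\nu)t_0} < 1$; this guarantees both that $1 - e^{-z(\bxi)t_0} \neq 0$, so that $\Lambda$ is well defined, and that $|1 - e^{-z(\bxi)t_0}| \le 2$. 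Combined with $|z(\bxi)| \ge \operatorname{Re} z(\bxi) \ge \alpha^2\|\bxi\|^2$, this yields the uniform lower bound $|\Lambda(\bxi)| \ge \tfrac{1}{2}\alpha^2\|\bxi\|^2$, which in particular diverges as $\|\bxi\| \to \infty$.

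Next I would exploit this unbounded growth to build the destabilizing sequence. For each $k \in \N$ pick a measurable set $A_k \subset \{\bxi \in \R^n : \|\bxi\| \ge k\}$ of a fixed positive Lebesgue measure $V$, and set $\widehat{\phi_k} := c_k\,\mathbf{1}_{A_k}$, choosing $c_k > 0$ so that $\|\widehat{\phi_k}\|_{L^2(\R^n)} = 1/k$. Regarding $\phi_k$ as a perturbation of the data $y_\delta$, Plancherel gives $\|\phi_k\|_{L^2(\R^n)} = 1/k \to 0$, so the perturbed data converge to $y_\delta$. By contrast, on $A_k$ the bound above gives $|\Lambda(\bxi)| \ge \tfrac{1}{2}\alpha^2 k^2$, whence $\|\Lambda\,\widehat{\phi_k}\|_{L^2(\R^n)} \ge \tfrac{1}{2}\alpha^2 k^2\,\|\widehat{\phi_k}\|_{L^2(\R^n)} = \tfrac{1}{2}\alpha^2 k \to \infty$. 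Transporting this back to physical space via the isometry, the source reconstruction produced by $y_\delta + \phi_k$ differs from $f_\delta$ by a term whose $L^2(\R^n)$ norm is at least $\tfrac{1}{2}\alpha^2 k$, and hence diverges even though the data converge to $y_\delta$.

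From here the conclusion is immediate: the inverse map $y_\delta \mapsto f_\delta$ given in Theorem \ref{teorema_Fuente_Inversa} is an unbounded operator on $L^2(\R^n)$ and therefore discontinuous, so the identification problem \eqref{transpeqn} is ill-posed in the sense of Hadamard. I expect the only genuinely delicate point to be controlling the complex exponential in the denominator of $\Lambda$, which is the feature introduced by the convection term $i\,\bbeta\cdot\bxi$ together with the reaction term $\nu$ in the full parabolic operator. Had $z(\bxi)$ been purely imaginary, the denominator $1 - e^{-z(\bxi)t_0}$ could vanish and $\Lambda$ would develop poles; the strict positivity of $\operatorname{Re} z(\bxi) = \alpha^2\|\bxi\|^2 + \nu$ is exactly what rules this out, pinning $|1 - e^{-z(\bxi)t_0}|$ between $1 - e^{-\nu t_0}$ and $2$, so that the quadratic growth of $|z(\bxi)|$ passes undisturbed to $|\Lambda(\bxi)|$.
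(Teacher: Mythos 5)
Your proposal is correct and follows essentially the same route as the paper: both arguments reduce the problem to the Fourier multiplier $\Lambda(\bxi)$ and show that it is unbounded as $\left\|\bxi\right\|\to\infty$, so that high-frequency data errors are amplified without control. The only difference is one of completeness rather than of method: where the paper stops at the lower bound \eqref{lim} and asserts the resulting discontinuity, you supply the explicit destabilizing sequence $\phi_k$ (vanishing in $L^2(\R^n)$ while $\|\Lambda\,\widehat{\phi_k}\|_{L^2(\R^n)}\to\infty$) that turns this into a rigorous failure of Hadamard's third condition.
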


\begin{proof}

We denote $\widehat{f_{\delta}}(\bxi)=\Lambda (\bxi )\widehat{y_{\delta}}(\bxi )$. It's easy to see that

\begin{equation}
\label{error_f_fdelta}
\|\widehat{f} - \widehat{f_{\delta}}\|_{L^2(\R^n)} = \| \Lambda(\bxi) ( \widehat{y}(\bxi )- \widehat{y_{\delta}}(\bxi ))\|_{L^2(\R^n)}=
\left\| \Lambda(\bxi)(\widehat{y}(\bxi )- \widehat{y_{\delta}}(\bxi ))\right\|_{L^2(\R^n)},
\end{equation}
on the other hand, using the Equations \eqref{z} and \eqref{Lambda} it is found that,

\begin{eqnarray}
 \label{lim}
| \Lambda (\bxi )|=\left| \dfrac{  z(\bxi)}{1-e^{-z(\bxi)\, t_0}}\right|
=\left| \dfrac{\alpha^2 \left\|\bxi\right\|^2 + i\,\bbeta \cdot \bxi + \nu}{1-e^{-(\alpha^2 \left\|\bxi\right\|^2 + i\,\bbeta \cdot \bxi + \nu)\, t_0}}\right| \geq 
\dfrac{  |\alpha^2 \left\|\bxi\right\|^2 + i\,\bbeta \cdot \bxi + \nu|}{1+e^{-(\alpha^2 \left\|\bxi\right\|^2 + \nu)\, t_0}} 
,
\end{eqnarray}
from \eqref{lim} it is evident that $\Lambda (\bxi)$ is not bounded, since it tends to infinity as $\left\|\bxi\right\| \to \infty$. As can be seen in
\eqref{error_f_fdelta} this fact amplifies the error of the measurements\index{Measurements} at high frequencies and this can lead to a large estimation error 
$ \|\widehat{f} - \widehat{f_{\delta}}\|_{L^2(\R^n)}$ even for very small observation or measurement errors. In other words, the solution to the identification problem \eqref{transpeqn} does not vary continuously with the data (see \cite{Engl96}).

\end{proof}

\section{Regularization operators}\label{s:3}\vspace{-4pt}

When an inverse problem is ill-posed, a regularization method is usually applied to stabilize the solution. In this section, three regularization operators are proposed for comparative purposes. The existence of the regularization parameter leading to three convergent methods is proved, and basic theoretical issues related to regularization operators are included.
  Readers unfamiliar with this topic may find more information at\cite{Engl96, Kirsch11}.

\subsection{Regularization solutions}

To stabilize the ill-posed problem, regularization operators will be used.

\begin{defn}
Let $\mathbb{X}$ and $\mathbb{Y}$ be Hilbert spaces and $T : \mathbb{Y} \longrightarrow \mathbb{X}$ a linear bounded operator. A regularization strategy for $T$ is a family of linear bounded operators satisfying
\begin{equation*}
\label{DefinitionR}
R_{\mu} : \mathbb{Y} \longrightarrow \mathbb{X}, \quad \mu>0, \quad / \lim_{\mu \to 0^+} R_{\mu} y = Ty, \quad \forall y \in \mathbb{Y}.
\end{equation*}
\end{defn}
For our particular case, the parametric families of linear operators are defined by
$R_\mu ^i: L^2(\R^n) \to L^2(\R^n)$ with $\mu \in \R^{+}$ and $i=1,2,3$; such that
\begin{equation}
\label{familyR}
R_{\mu}^{1} \, \widehat{y} := \dfrac{\Lambda (\bxi)}{1+\mu^2 \left\|\bxi\right\|^2} \, \widehat{y} , \qquad
R_{\mu}^{2} \, \widehat{y} := \dfrac{\Lambda (\bxi)}{1+\mu^2 \left\|\bxi\right\|^4} \, \widehat{y} , \qquad
R_{\mu}^{3} \, \widehat{y} := \dfrac{\Lambda (\bxi)}{e^{\mu^2 \,\left\|\bxi\right\|^ 2/4}} \, \widehat{y} ,
\end{equation}
where $\Lambda (\bxi )$ is defined in \eqref{Lambda}, $R_{\mu}^{i}$ with $i=1,2,3$, are regularization strategies for $\Lambda (\bxi )$ and $\mu$ is the regularization parameter.

\begin{nt}
Note that the denominators of the expressions \eqref{familyR} that define the linear operators $R_{\mu}^{i}$ with $i=1,2,3$, were introduced in the solution only for stabilization purposes.
\end{nt}

\begin{thrm}  [Convergent regularization operators] 
Consider the source identification problem \eqref{transpeqn}.
%
%
Let $u(\cdot,t), f(\cdot) \in L^2(\R^n) $ that satisfy the following differential equation with initial condition
\begin{equation}
\label{illpp}
\begin{cases}
\dfrac{\partial u}{\partial t}({\bx},t)=\alpha^2 \Delta u(\bx,t)-\bbeta \cdot \nabla (u(\bx,t))-\nu u(\bx,t)+ f(\bx), \quad & \bx \in \R^{n}, \,\,\, t>0, \\
u(\bx,t)=0, \quad & \bx \in \R^{n}, \,\,\, t=0 \\
\end{cases}
\end{equation}
and let $\{R_{\mu}^{i}\}$ with $i=1,2,3$ be the families of operators defined in \eqref{familyR}.
Then, for all $y(\bx)=u(\bx,t_0)$ there exists an a priori parameter choice rule for $\mu>0$ such that the pairs $(R_{\mu}^ {i},\mu)$ for $i=1,2,3$
are convergent regularization methods for the identification problem \eqref{illpp}.
\end{thrm}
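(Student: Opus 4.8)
The plan is to exploit Plancherel's theorem, which makes the Fourier transform an isometry of $L^2(\R^n)$, so that every error estimate can be carried out on the Fourier side, where each $R_\mu^i$ acts as multiplication by the symbol $m_\mu^i(\bxi) := \Lambda(\bxi)/d_i(\mu,\bxi)$, with $d_1 = 1+\mu^2\|\bxi\|^2$, $d_2 = 1+\mu^2\|\bxi\|^4$ and $d_3 = e^{\mu^2\|\bxi\|^2/4}$. Writing $f_\mu^{i,\delta}$ for the inverse transform of $R_\mu^i\,\widehat{y_\delta}$ and recalling from Theorem \ref{teorema_Fuente_Inversa} that $\widehat f = \Lambda\,\widehat y$, I would split the total error by the triangle inequality into an \emph{approximation (bias) error} $\|R_\mu^i\widehat y - \widehat f\|_{L^2(\R^n)}$ and a \emph{propagated data error} $\|R_\mu^i(\widehat y - \widehat{y_\delta})\|_{L^2(\R^n)} \le \|R_\mu^i\|\,\delta$, and then control each piece separately.

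For the propagated error the key step, and the main obstacle, is to bound the operator norm $\|R_\mu^i\| = \sup_{\bxi}|m_\mu^i(\bxi)|$ and, above all, to quantify its blow-up rate as $\mu\to 0^+$. First I would show that the denominator of $\Lambda$ is bounded away from zero: since $\mathrm{Re}\,z(\bxi) = \alpha^2\|\bxi\|^2 + \nu \ge \nu > 0$, one has $|e^{-z(\bxi)t_0}| = e^{-t_0\,\mathrm{Re}\,z(\bxi)} \le e^{-\nu t_0} < 1$, whence $|1 - e^{-z(\bxi)t_0}| \ge 1 - e^{-\nu t_0} > 0$ uniformly in $\bxi$. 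This gives the upper bound $|\Lambda(\bxi)| \le |z(\bxi)|/(1-e^{-\nu t_0}) \le C(1+\|\bxi\|^2)$. Substituting it into each symbol and maximizing over $r = \|\bxi\|\ge 0$ by elementary calculus yields $\|R_\mu^1\|,\|R_\mu^3\| = O(\mu^{-2})$ and $\|R_\mu^2\| = O(\mu^{-1})$; in particular every $R_\mu^i$ is bounded for fixed $\mu>0$, with a controlled polynomial growth as $\mu\to 0^+$.

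For the approximation error I would write $m_\mu^i(\bxi)\widehat y(\bxi) - \widehat f(\bxi) = \big(d_i(\mu,\bxi)^{-1} - 1\big)\widehat f(\bxi)$ and observe that the scalar factor tends to $0$ pointwise as $\mu\to 0^+$ while remaining bounded by $1$ in modulus: for $i=1,2$ because $\mu^2\|\bxi\|^2/(1+\mu^2\|\bxi\|^2)$ (resp. with $\|\bxi\|^4$) lies in $[0,1)$, and for $i=3$ because $0<e^{-\mu^2\|\bxi\|^2/4}\le 1$. Since $\widehat f\in L^2(\R^n)$ dominates the integrand, the dominated convergence theorem gives $\|R_\mu^i\widehat y - \widehat f\|_{L^2(\R^n)}\to 0$ as $\mu\to 0^+$, which is precisely the regularization-strategy requirement $R_\mu^i\widehat y \to \Lambda\widehat y = \widehat f$.

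Finally I would combine the two bounds into $\|f - f_\mu^{i,\delta}\|_{L^2(\R^n)} \le \|R_\mu^i\widehat y - \widehat f\|_{L^2(\R^n)} + \|R_\mu^i\|\,\delta$ and prescribe the a priori rule $\mu = \mu(\delta)$ so that $\mu(\delta)\to 0$ and $\|R_\mu^i\|\,\delta\to 0$ hold simultaneously as $\delta\to 0$. Given the rates above, this is achieved for instance by $\mu(\delta)=\delta^{1/3}$ for $i=1,3$ (so that $\delta/\mu^2 = \delta^{1/3}\to 0$) and by $\mu(\delta)=\delta^{1/2}$ for $i=2$ (so that $\delta/\mu = \delta^{1/2}\to 0$); both error terms then vanish, establishing that each pair $(R_\mu^i,\mu(\delta))$ is a convergent regularization method. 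The only genuinely delicate point is the norm estimate of the second paragraph; once the uniform lower bound on $|1-e^{-z(\bxi)t_0}|$ is secured, the remaining steps are routine.
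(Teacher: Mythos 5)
Your proof is correct, but it follows a genuinely different and more self-contained route than the paper. The paper's own argument is abstract and short: it checks that each $R_{\mu}^{i}$ is a bounded linear multiplier for fixed $\mu>0$, asserts the pointwise-in-$\mu$ limit $R_{\mu}^{i}\,\widehat{y}\to\Lambda\,\widehat{y}$ so that the family satisfies the definition of a regularization strategy for $\Lambda$, and then invokes the general existence theorem of Engl--Hanke--Neubauer, which guarantees that \emph{every} regularization strategy admits an a priori parameter choice rule making it a convergent regularization method; no explicit rule and no rates are produced at this stage. You instead construct the rule by hand: you split the error into bias and propagated noise, derive the uniform lower bound $|1-e^{-z(\bxi)t_0}|\ge 1-e^{-\nu t_0}$ (which the paper obtains later, in essence, via Lemma \ref{lemma1}), deduce the quantitative operator-norm blow-up rates (your $O(\mu^{-1})$ for $R_{\mu}^{2}$ is in fact sharper than the $O(\mu^{-2})$ the paper settles for in Lemma \ref{lemma8}), prove the bias convergence by dominated convergence, and balance the two terms with explicit choices $\mu(\delta)$. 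In doing so you essentially anticipate the content of Lemma \ref{lemma8} and Theorem \ref{boundestimate}, which is where the paper does this quantitative work. Your approach buys an explicit, verifiable rule and makes transparent a point the paper leaves implicit: the limit $R_{\mu}^{i}\,\widehat{y}\to\Lambda\,\widehat{y}$ in $L^2(\R^n)$ requires $\Lambda\,\widehat{y}=\widehat{f}\in L^2(\R^n)$, i.e.\ that $y$ is attainable data, which is exactly the hypothesis $y(\bx)=u(\bx,t_0)$ with $f\in L^2(\R^n)$; the paper's approach buys brevity at the cost of deferring all quantitative content to the cited reference and to later sections.
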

%
\begin{proof}
The operators $\{R_{\mu}^{i}\}$ with $i=1,2,3$ defined in \eqref{familyR} are continuous for all $\bxi \in \R^n$ and are bounded, since it is evident that,
\begin{equation*}
\begin{split}
0 \leq\lim_{\left\|\bxi\right\| \to \infty} R_{\mu}^{i}<\infty, \qquad i=1,2,3.
\end{split}
\end{equation*}
Therefore, for each parameter $\mu>0$, the operators $R_{\mu}^{i}$ with $i=1,2,3$ are linear, continuous and it has to be
\begin{equation*}
\lim _{\mu \to 0^+} R_{\mu}^{i} \, \widehat{y} = \Lambda \, \widehat{y}, \qquad  i=1,2,3,
\end{equation*}
where $ \widehat{y} \in L^{2}(\R^n)$, then $R_{\mu}^{i}$ with $i=1,2,3$ are regularization strategies for $ \Lambda$. Therefore, according to \cite{Engl96},
there exist \emph{a priori} parameter choice rules $\mu$ such that $(R_{\mu}^{i}, \mu)$ with $i=1,2,3$ are convergent regularization methods to solve \eqref {illf}.
\end{proof}

The regularized solution of the inverse source identification problem, in frequency space, is given by
\begin{equation}
\label{ffregtransf}
\widehat{f}_{\delta ,\mu}^{i} (\bxi) = R_{\mu}^{i}  \, \widehat{y}_\delta (\bxi), \qquad i=1,2,3.
\end{equation}
Therefore, from the equation \eqref{ffregtransf} an approximate expression for the function $f$ is obtained, solution of the problem given in \eqref{illpp}. This is,
\begin{equation*}
\label{ffreg}
f_{\delta ,\mu}^{i}(\bx) =\left(\dfrac{1}{\sqrt{2\pi}}\right)^n \ds\int\limits_{\R^n } e^{i \bxi \cdot \bx} R_{\mu}^{i} \, \widehat{y}_{\delta} (\bxi) \, d\bxi , \qquad i=1,2 ,3,
\end{equation*}
equivalently
\begin{equation}
\label{ffreg2}
f_{\delta ,\mu}^{i}(\bx) =\left(\dfrac{1}{\sqrt{2\pi}}\right)^n \ds\int\limits_{\R^n } e^{i \bxi \cdot \bx} R_{\mu}^{i} \, \left[\left(\dfrac{1}{\sqrt{2\pi}}\right)^n \ds \int\limits_{\R^n} e^{-i \bxi \cdot \bx} \, y_{\delta}(\bx)d\bx\right] \, d\bxi
, \qquad i=1,2,3.
\end{equation}
Using in the equation \eqref{ffreg2}, the definitions of the regularization operators $(R_{\mu}^{i}, \mu)$ with $i=1,2,3$ given by the equations \eqref{familyR}, three analytic expressions for the source estimate are obtained (one for each regularization strategy). These are,
\begin{equation*}
\label{f1}
f_{\delta ,\mu}^{1}(\bx) =\left(\dfrac{1}{\sqrt{2\pi}}\right)^n \ds\int\limits_{\R^n } e^{i \bxi \cdot \bx} \dfrac{\Lambda (\bxi)}{1+\mu^2 \left\|\bxi\right\|^2} \, \left[\left( \dfrac{1}{\sqrt{2\pi}}\right)^n \ds\int\limits_{\R^n} e^{-i \bxi \cdot \bx} \, y_{\delta} (\bx)d\bx\right] \, d\bxi,
\end{equation*}
\begin{equation*}
\label{f2}
f_{\delta ,\mu}^{2}(\bx) =\left(\dfrac{1}{\sqrt{2\pi}}\right)^n \ds\int\limits_{\R^n } e^{i \bxi \cdot \bx} \dfrac{\Lambda (\bxi)}{1+\mu^2 \left\|\bxi\right\|^4} \, \left[\left( \dfrac{1}{\sqrt{2\pi}}\right)^n \ds\int\limits_{\R^n} e^{-i \bxi \cdot \bx} \, y_{\delta} (\bx)d\bx\right] \, d\bxi
\end{equation*}
and
\begin{equation*}
\label{f3}
f_{\delta ,\mu}^{3}(\bx) =\left(\dfrac{1}{\sqrt{2\pi}}\right)^n \ds\int\limits_{\R^n } e^{i \bxi \cdot \bx} \dfrac{\Lambda (\bxi)}{e^{\mu^2 \,\left\|\bxi\right\|^2/4}} \, \left[\left(\dfrac{1}{\sqrt{2\pi}}\right)^n \ds\int\limits_{\R^n} e^{-i \bxi \cdot \bx } \, y_{\delta}(\bx)d\bx\right] \, d\bxi.
\end{equation*}

\subsection{Error Analysis}

In this section, a bound will be obtained for the error committed by each estimation.

\subsubsection{Auxiliary results}

In order to analyze the behavior of the proposed regularizations, some results are first introduced that will be used later to obtain an error bound between the source $f(\bx)$ and its estimates $f_{\delta ,\mu } ^{i}(\bx).$

\begin{nt} 
Some of the auxiliary results may seem trivial to the reader. However, and in order to carry out a complete and self-contained work, a brief demonstration is presented for each one of them.
\end{nt}

\begin{lmm}
\label{lemma1}
Let $\omega \in \C$ with $Re(\omega)>0$ then
$\ds \left\vert \dfrac{1}{1-e^{-\omega }} \right\vert \le \dfrac{1}{1-e^{-Re(\omega)}}.$
\end{lmm}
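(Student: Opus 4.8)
The plan is to reduce the stated inequality to a single application of the reverse triangle inequality. First I would write $\omega = a + i b$ with $a = Re(\omega) > 0$ and $b = Im(\omega)$, and record that $|e^{-\omega}| = |e^{-a}|\,|e^{-ib}| = e^{-a}$, since $|e^{-ib}| = 1$. Because $a > 0$ forces $e^{-a} < 1$, the quantity $1 - e^{-a}$ appearing in the right-hand denominator is strictly positive; this positivity is the one point I would be careful about, since it is what makes the statement meaningful and fixes the direction of the final inequality.

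Next, observing that $\left| \dfrac{1}{1 - e^{-\omega}} \right| = \dfrac{1}{|1 - e^{-\omega}|}$, I would note that the claim is equivalent to the lower bound $|1 - e^{-\omega}| \ge 1 - e^{-a}$ on the denominator. Applying the reverse triangle inequality $|z - w| \ge \big| |z| - |w| \big|$ with $z = 1$ and $w = e^{-\omega}$ gives $|1 - e^{-\omega}| \ge |\,1 - e^{-a}\,| = 1 - e^{-a}$, where the last step drops the absolute value using $e^{-a} < 1$.

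Finally, since both $|1 - e^{-\omega}|$ and $1 - e^{-a}$ are strictly positive, I would take reciprocals — which reverses the inequality — to conclude $\dfrac{1}{|1 - e^{-\omega}|} \le \dfrac{1}{1 - e^{-a}}$, that is, the desired bound with $a = Re(\omega)$. There is no genuine obstacle here: the only subtlety is that the reciprocal step demands $|1 - e^{-\omega}| \ge 1 - e^{-a} > 0$, and the strict positivity on the right is exactly what the hypothesis $Re(\omega) > 0$ supplies; without it the right-hand side could vanish or turn negative and the estimate would be either vacuous or false.
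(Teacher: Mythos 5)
Your argument is correct. The only point where the two proofs differ is in how they obtain the key lower bound $\left|1-e^{-\omega}\right| \ge 1-e^{-Re(\omega)}$: you invoke the reverse triangle inequality $|z-w|\ge \bigl||z|-|w|\bigr|$ with $z=1$, $w=e^{-\omega}$, together with $|e^{-\omega}|=e^{-Re(\omega)}<1$, whereas the paper expands the squared modulus via Euler's formula and writes $\left|1-e^{-\omega}\right|^2=\left(1-e^{-Re(\omega)}\right)^2+2e^{-Re(\omega)}\left(1-\cos(Im(\omega))\right)$, then discards the manifestly nonnegative second term. Your route is shorter and avoids the computation entirely; the paper's route is more explicit and exhibits the exact surplus, which in particular shows that equality holds precisely when $\cos(Im(\omega))=1$. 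Both correctly flag that $Re(\omega)>0$ is what makes the right-hand side strictly positive so that taking reciprocals is legitimate and reverses the inequality in the right direction. There is no gap in your proposal.
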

\begin{proof}

Euler's formula is used for complex numbers and it is found that,

\begin{equation*}
\ds{\left|1-e^{-\omega }\right|^2= \left(1-e^{-Re(\omega)}\right)^2+2e^{-Re(\omega)}\left(1-cos\left(Im(\omega)\right)\right)} \geq \left(1-e^{-Re(\omega)}\right)^2,
\end{equation*}
therefore, 
\begin{equation*}
\left|1-e^{-\omega }\right| \geq 1-e^{-Re(\omega)} \Longrightarrow \left\vert\dfrac{1}{1-e^{-\omega }} \right\vert \le \dfrac{1}{1- e^{-Re(\omega)}}.
\end{equation*}
\end{proof}

\begin{lmm}
\label{lemma2}
The function $f:\R^{+} \to \R$ defined by
$\ds f(x):=
\begin{cases}
   \dfrac{x}{1-e^{-x}}, & 0<x<1,\\
   \dfrac{1}{1-e^{-x}}, & 1\leq x,
   \end{cases}$
satisfies $f(x) < 2 $.
\end{lmm}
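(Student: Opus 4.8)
The plan is to split along the two branches and to show that each is dominated by the common boundary value $f(1)=\frac{1}{1-e^{-1}}=\frac{e}{e-1}$, and then to observe that this number already lies below $2$. Since the two pieces agree in the limit at $x=1$ (both give $\frac{1}{1-e^{-1}}$), it suffices to bound each piece separately by this value and then invoke continuity to identify $\frac{e}{e-1}$ as the global maximum.

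For the branch $1\le x$ I would argue by monotonicity: the map $x\mapsto 1-e^{-x}$ is strictly increasing and strictly positive on $\R^+$, so its reciprocal $\frac{1}{1-e^{-x}}$ is strictly decreasing on $[1,\infty)$. Hence $f(x)\le f(1)=\frac{1}{1-e^{-1}}$ for every $x\ge 1$, with the supremum on this branch attained exactly at $x=1$.

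For the branch $0<x<1$, I would set $g(x):=\frac{x}{1-e^{-x}}$ and show that $g$ is strictly increasing, so that $g(x)<\lim_{x\to 1^-}g(x)=\frac{1}{1-e^{-1}}$. A direct differentiation gives $g'(x)=\frac{1-(1+x)e^{-x}}{(1-e^{-x})^2}$, whose sign agrees with that of the numerator $1-(1+x)e^{-x}$; thus $g'(x)>0$ is equivalent to $e^{x}>1+x$. This last inequality, valid for every $x>0$, is the only nontrivial ingredient and constitutes the main (though entirely elementary) obstacle of the argument — everything else is bookkeeping.

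Combining the two branches yields $f(x)\le \frac{1}{1-e^{-1}}=\frac{e}{e-1}$ for all $x\in\R^+$. Finally, $\frac{e}{e-1}<2$ is equivalent to $e<2(e-1)$, i.e.\ to $2<e$, which holds; therefore $f(x)<2$ on all of $\R^+$, as claimed. Note that this route in fact establishes the sharper bound $f(x)\le \frac{e}{e-1}$, from which the stated estimate follows at once.
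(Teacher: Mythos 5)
Your proof is correct and follows essentially the same route as the paper's: monotonicity on each branch (increasing on $(0,1)$, decreasing on $[1,\infty)$), a common supremum of $\frac{1}{1-e^{-1}}=\frac{e}{e-1}$ at $x=1$, and the observation that this is less than $2$. In fact your derivative $g'(x)=\frac{1-(1+x)e^{-x}}{(1-e^{-x})^2}$, whose positivity rests on $e^{x}>1+x$, is the correct computation, whereas the paper's displayed numerator carries a sign slip on the $xe^{-x}$ term.
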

\begin{proof}
First, consider $f$ at $(0,1)$. Differentiating, in this case, we have
\begin{equation*}
f'(x)= \left(\dfrac{x}{1-e^{-x}}\right) '= \dfrac{1+e^{-x}(-1+x)}{(1 -e^{-x})^2} > 0,
\end{equation*}

the function $f$ is increasing in $(0,1)$. Then $\dfrac{x}{1-e^{-x}} \leq \dfrac{1}{1-e^{-1}}.$

On the other hand, for $x\geq1$, we have
\begin{equation*}
f'(x)=\left (\dfrac{1}{1-e^{-x}}\right) '=\dfrac{-e^{-x}}{(1-e^{-x} )^2} < 0,
\end{equation*}

the function $f$ is decreasing $ \forall x\geq1 $. Then $\dfrac{x}{1-e^{-x}} \leq \dfrac{1}{1-e^{-1}}.$

So, $f(x)\leq \dfrac{1}{1-e^{-1}} < 2$,
\end{proof}

\begin{lmm}
\label{lemma3}
Let $x \in \R-\left\{0\right\}$. If $0<\mu<1 $ we have $\dfrac{x^2}{(1-e^{-x^2})e^{(\mu^2 x^2)/4}} < \dfrac {4}{\mu^2}.$
\end{lmm}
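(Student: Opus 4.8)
The plan is to turn this two-parameter inequality into a single one-variable maximization by first peeling off the exponential factor. Setting $s := x^2 > 0$ (legitimate since $x \neq 0$) and factoring the denominator, the quantity to be bounded is
\begin{equation*}
\frac{x^2}{(1-e^{-x^2})\,e^{(\mu^2 x^2)/4}} = \frac{s}{1-e^{-s}}\, e^{-\mu^2 s/4}.
\end{equation*}
The first factor is precisely the object controlled by Lemma \ref{lemma2}: applying that lemma with argument $s$ gives $\frac{s}{1-e^{-s}} < 2$ when $0 < s < 1$, and $\frac{1}{1-e^{-s}} < 2$, hence $\frac{s}{1-e^{-s}} < 2s$, when $s \geq 1$. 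This suggests splitting the analysis at $s = 1$.

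On the range $0 < s < 1$ the estimate is immediate: using $\frac{s}{1-e^{-s}} < 2$ and $e^{-\mu^2 s/4} < 1$ gives a bound of $2$, and since $0 < \mu < 1$ forces $\frac{4}{\mu^2} > 4 > 2$, the claim holds there with room to spare. The substantive range is $s \geq 1$, where the problem reduces to controlling $2\,s\,e^{-\mu^2 s/4}$. I would therefore study $h(s) = s\,e^{-\mu^2 s/4}$ on $(0,\infty)$: its derivative $h'(s) = e^{-\mu^2 s/4}\bigl(1 - \tfrac{\mu^2 s}{4}\bigr)$ vanishes only at $s^\ast = 4/\mu^2$, where $h$ attains its global maximum $h(s^\ast) = \frac{4}{\mu^2}e^{-1} = \frac{4}{e\mu^2}$. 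Thus $2\,s\,e^{-\mu^2 s/4} \leq \frac{8}{e\mu^2}$, and since $\frac{8}{e} < 4$ this yields $\frac{s}{1-e^{-s}}e^{-\mu^2 s/4} < \frac{4}{\mu^2}$ on $s \geq 1$ as well, finishing the proof.

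The only genuine computation is this maximization of $h$, so that is where I expect the work to sit; everything else is bookkeeping. The one point that must be verified is that the critical point $s^\ast = 4/\mu^2$ actually lies in the region $s \geq 1$ handled by the second case, so that the unconstrained maximum of $h$ is the relevant one there. This is guaranteed exactly by the hypothesis $0 < \mu < 1$, which gives $4/\mu^2 > 4 > 1$; the two cases thus interlock cleanly at $s = 1$. Finally, the strictness of the target inequality costs nothing extra, coming directly from the numerical gap $\frac{8}{e} \approx 2.94 < 4$.
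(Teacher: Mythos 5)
Your proof is correct, and it is worth comparing carefully with the paper's, because the two diverge exactly at the one substantive step. Both arguments split at $x^2=1$ and both dispatch the range $0<x^2<1$ by combining Lemma \ref{lemma2} with the trivial bound $e^{-\mu^2 x^2/4}<1$. On the range $x^2\geq 1$, however, the paper asserts the chain
\begin{equation*}
\dfrac{2 x^2}{e^{(\mu^2 x^2)/4}}<\dfrac{2}{e^{\mu^2/4}},
\end{equation*}
i.e.\ that $s\,e^{-\mu^2 s/4}$ is maximized at $s=1$ on $[1,\infty)$. That is false for $0<\mu<1$: the function $h(s)=s\,e^{-\mu^2 s/4}$ is \emph{increasing} until $s^\ast=4/\mu^2>4$, so for example $s=2$ and small $\mu$ already violates the displayed inequality. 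Your proof replaces this invalid monotonicity claim with the honest global maximization $h(s)\leq h(s^\ast)=\tfrac{4}{e\mu^2}$, giving $2\,s\,e^{-\mu^2 s/4}\leq \tfrac{8}{e\mu^2}<\tfrac{4}{\mu^2}$, and you correctly observe that the hypothesis $0<\mu<1$ is what places $s^\ast$ inside the region $s\geq 1$. So your route is not merely an alternative: it repairs a genuine gap in the paper's own argument while arriving at the same (true) conclusion, at the modest cost of one explicit critical-point computation. The constant $8/e\approx 2.94<4$ even shows the stated bound $4/\mu^2$ has slack, consistent with how Lemma \ref{lemma3} is consumed later in the proof of Lemma \ref{lemma8}.
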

\begin{proof}
Using the lemma \ref{lemma2} the proof is immediate since,
\begin{equation*}
  0 \leq x^2<1 \Longrightarrow \dfrac{x^2}{(1-e^{-x^2})e^{(\mu^2 x^2)/4}}<\dfrac{ 2}{e^{(\mu^2 x^2)/4}}<2<\frac{2}{\mu^2}<\frac{4}{\mu^2},
\end{equation*}
\begin{equation*}
  x^2\geq1 \Longrightarrow \dfrac{x^2}{(1-e^{-x^2})e^{(\mu^2 x^2)/4}}<\dfrac{2 x^ 2}{e^{(\mu^2 x^2)/4}}<\dfrac{2}{e^{\mu^2/4}}<\frac{4}{\mu^2},
\end{equation*}
\end{proof}

\begin{lmm}
\label{lemma4}
Let $x \in \R-\left\{0\right\}$. So $\dfrac{1-e^{-x^2}}{x^2}<1$.
\end{lmm}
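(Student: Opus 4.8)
The plan is to reduce the claim to the elementary exponential inequality $e^{t} > 1 + t$ (valid for all $t \neq 0$). Since $x \neq 0$ forces $x^2 > 0$, I would first substitute $u := x^2 > 0$ and rewrite the target inequality $\frac{1-e^{-x^2}}{x^2} < 1$ as $\frac{1-e^{-u}}{u} < 1$. Because $u > 0$, multiplying through by $u$ shows this is equivalent to $1 - e^{-u} < u$, that is, to $e^{-u} > 1 - u$.

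The heart of the argument is therefore to establish $e^{-u} > 1 - u$ for every $u > 0$. I would do this by a one-variable monotonicity argument in the same spirit as Lemma \ref{lemma2}: define $g(u) := e^{-u} - 1 + u$, observe that $g(0) = 0$, and compute $g'(u) = 1 - e^{-u}$. For $u > 0$ one has $e^{-u} < 1$, so $g'(u) > 0$, whence $g$ is strictly increasing on $(0,\infty)$ and $g(u) > g(0) = 0$. This yields $e^{-u} - 1 + u > 0$, i.e. $e^{-u} > 1 - u$, which is exactly what is required.

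Finally I would substitute $u = x^2$ back to conclude $\frac{1-e^{-x^2}}{x^2} < 1$ for all $x \in \R - \{0\}$, completing the proof.

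I do not anticipate any genuine obstacle here: the statement is essentially a restatement of the strict convexity of the exponential. The only point deserving a little care is that the inequality is \emph{strict}, which holds precisely because the hypothesis $x \neq 0$ rules out the boundary case $u = 0$ (where equality occurs). One could instead bypass the auxiliary function $g$ by invoking the series $1 - e^{-u} = u - \tfrac{u^2}{2} + \tfrac{u^3}{6} - \cdots$ and bounding the tail, but the monotonicity route is cleaner and avoids any convergence bookkeeping.
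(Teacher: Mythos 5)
Your proof is correct, and it takes a genuinely different (and cleaner) route than the paper. The paper works with the quotient itself: it sets $g(x):=\frac{1-e^{-x^2}}{x^2}$, computes $g'$, asserts $g'(x)<0$ for $x>0$, and then combines strict monotonicity with $\lim_{x\to 0^+}g(x)=1$ to conclude $g(x)<1$. You instead substitute $u=x^2$ and reduce the claim to the tangent-line inequality $e^{-u}>1-u$ for $u>0$, proved by the monotonicity of $e^{-u}-1+u$. Your reduction buys two things: it avoids the limit computation entirely, and it makes explicit the elementary fact that the paper's argument silently depends on --- the sign of $g'$ in the paper hinges on $1-(x^2+1)e^{-x^2}>0$, i.e.\ on $e^{t}>1+t$ with $t=x^2$, which is exactly the inequality you prove but which the paper leaves unjustified (its displayed $g'$ also carries a typographical slip in the denominator, $x^2$ where $x^4$ is meant). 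The paper's approach, on the other hand, yields slightly more information as a by-product, namely that the quotient decreases monotonically from $1$ to $0$, though none of that extra information is used elsewhere. Your handling of strictness via $u>0$ is exactly right.
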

\begin{proof}
Let $g:\R^{+} \to \R$ be defined by $g(x):=\dfrac{1-e^{-x^2}}{x^2}$. Differentiating we get,
\begin{equation*}
g'(x)= \dfrac{-2x\left[1-(x^2+1)e^{-x^2}\right]}{x^2}< 0, \quad \forall x>0 .
\end{equation*}
Then, the function $g$ is strictly decreasing\index{Function!decreasing} and also,
\begin{equation*}
\lim_{x \to 0^+}g(x)=1, \quad  \lim_{x \to +\infty}g(x)=0.
\end{equation*}
Then $g(x)<\lim_{x \to 0^+}g(x)=1, \quad \forall x>0.$
\end{proof}

\begin{lmm}
\label{lemma5}
Let $a,b,x \in \R^+$. So $\dfrac{x}{ax^2+b}\leq \dfrac{1}{2\sqrt{ab}}$.
\end{lmm}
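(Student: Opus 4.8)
The plan is to reduce the claimed inequality to a perfect-square (equivalently, AM--GM) statement. Since $a,b,x$ are all strictly positive, the denominator $ax^2+b$ and the quantity $2\sqrt{ab}$ are both positive, so the inequality $\frac{x}{ax^2+b}\le\frac{1}{2\sqrt{ab}}$ is equivalent, after cross-multiplying by these positive factors, to $2\sqrt{ab}\,x\le ax^2+b$. This last inequality is the form I would actually verify.

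The key step is to recognize the right-hand side minus the left-hand side as a single square: $ax^2-2\sqrt{ab}\,x+b=\left(\sqrt{a}\,x-\sqrt{b}\right)^2\ge 0$. This immediately gives $ax^2+b\ge 2\sqrt{ab}\,x$, and dividing both sides by the positive quantity $2\sqrt{ab}\,(ax^2+b)$ recovers the claim. Equivalently, one can invoke AM--GM directly on the two positive terms $ax^2$ and $b$, namely $\frac{ax^2+b}{2}\ge\sqrt{ax^2\cdot b}=\sqrt{ab}\,x$, where the last equality uses $\sqrt{x^2}=x$ since $x>0$; this is exactly the inequality needed. Either route is short and self-contained, matching the style of the preceding auxiliary lemmas.

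As a sanity check that the constant $\frac{1}{2\sqrt{ab}}$ is sharp, I would note that equality holds precisely when $\sqrt{a}\,x=\sqrt{b}$, i.e.\ at $x=\sqrt{b/a}$, which is also the unique interior critical point of $h(x):=\frac{x}{ax^2+b}$, obtained from $h'(x)=\frac{b-ax^2}{(ax^2+b)^2}$ and confirmed to be a maximum since $h'>0$ for $x<\sqrt{b/a}$ and $h'<0$ beyond it. I do not anticipate any genuine obstacle: the only points requiring care are the use of $x\in\R^+$ to replace $\sqrt{x^2}$ by $x$ and to preserve the direction of the inequality when cross-multiplying, both of which are guaranteed by the hypothesis.
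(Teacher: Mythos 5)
Your proof is correct, but it takes a genuinely different route from the paper. The paper proves this lemma by calculus: it sets $h(x)=\frac{x}{ax^2+b}$, computes $h'$, locates the unique critical point $x_0=\sqrt{b/a}$, and argues from the boundary behaviour ($h\to 0$ at $0$ and at $+\infty$) that $x_0$ is the absolute maximum, with $h(x_0)=\frac{1}{2\sqrt{ab}}$ --- essentially the argument you relegate to your ``sanity check'' paragraph. Your primary argument instead cross-multiplies and reduces the claim to the perfect square $\left(\sqrt{a}\,x-\sqrt{b}\right)^2\ge 0$, i.e.\ AM--GM on $ax^2$ and $b$. Your route is shorter, entirely algebraic, and avoids the small infelicities in the paper's version (the paper's displayed derivative omits the square on the denominator, $h'(x)=\frac{b-ax^2}{(ax^2+b)^2}$, and it evaluates ``$f(0)=0$'' even though $h$ is only defined on $\R^+$; neither affects the conclusion). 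What the paper's approach buys is uniformity of style with the neighbouring lemmas (Lemmas 2, 4, 6), which are all proved by monotonicity/critical-point analysis; what yours buys is that it also identifies the equality case $x=\sqrt{b/a}$ for free and makes the sharpness of the constant transparent. Both are valid and complete.
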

\begin{proof}
Let $h:\R^{+} \to \R$ be defined by $h(x):=\dfrac{x}{ax^2+b}$. Differentiating we get,
\begin{equation*}
h'(x)= \dfrac{b-ax^2}{ax^2+b}, \quad \forall x>0.
\end{equation*}
Then the function $h$ has its only critical point at $x_0=\sqrt{\dfrac{b}{a}}$. Noting that $f(0)=0$
and $\lim_{x \to +\infty}h(x)=0$, it follows that $x_0$ is the absolute maximum of $h$.
So,
\begin{equation*}
h(x)=\dfrac{x}{ax^2+b}\leq h(x_0)=\dfrac{1}{2\sqrt{ab}}, \quad \forall x>0,
\end{equation*}
\end{proof}

\begin{lmm}
\label{lemma6}
Let $\rho \in \R$. If $0<\mu<1 $ we have $\dfrac{\vert \rho \vert}{1+\rho^2\mu^2} < \dfrac{1}{2 \mu^2}.$
Furthermore, for $\alpha^{2},\nu >0$ the following inequality is valid $\dfrac{\alpha^2\rho^2+\nu }{1+\rho^2 \mu^2} < \dfrac{1}{\mu^2} \left(\nu + \alpha^2\right).$
\end{lmm}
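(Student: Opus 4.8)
The plan is to handle the two inequalities separately, since each follows quickly from the auxiliary lemmas already established together with the hypothesis $0<\mu<1$.

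For the first inequality I would invoke Lemma \ref{lemma5} directly. Taking $a=\mu^2$, $b=1$, and $x=|\rho|$ in the bound $\frac{x}{ax^2+b}\le\frac{1}{2\sqrt{ab}}$ gives $\frac{|\rho|}{1+\rho^2\mu^2}\le\frac{1}{2\mu}$. The only remaining step is to upgrade $\frac{1}{2\mu}$ to $\frac{1}{2\mu^2}$: since $0<\mu<1$ forces $\mu^2<\mu$, one has $\frac{1}{2\mu}<\frac{1}{2\mu^2}$, and chaining the two estimates yields the strict inequality claimed. The degenerate case $\rho=0$ is immediate, as the left-hand side then vanishes.

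For the second inequality I would split the numerator and bound each piece, writing $\frac{\alpha^2\rho^2+\nu}{1+\rho^2\mu^2}=\frac{\alpha^2\rho^2}{1+\rho^2\mu^2}+\frac{\nu}{1+\rho^2\mu^2}$. For the first summand I would drop the $1$ in the denominator, using $1+\rho^2\mu^2>\rho^2\mu^2$ to get $\frac{\alpha^2\rho^2}{1+\rho^2\mu^2}<\frac{\alpha^2}{\mu^2}$ (this remains valid at $\rho=0$, where the left side is $0<\frac{\alpha^2}{\mu^2}$). For the second summand I would use $1+\rho^2\mu^2\ge 1$ followed by $\mu^2<1$ to obtain $\frac{\nu}{1+\rho^2\mu^2}\le\nu<\frac{\nu}{\mu^2}$. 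Adding the two bounds produces exactly $\frac{1}{\mu^2}(\alpha^2+\nu)$.

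The computations are entirely elementary, so there is no substantive obstacle beyond bookkeeping. The only point needing a little care is preserving strictness throughout: I must confirm that at least one summand in each split is estimated with a strict inequality so that the resulting sum is strict, and that the boundary case $\rho=0$ — where the denominators collapse to $1$ — is covered, which it is precisely because $\alpha^2,\nu>0$ and $\mu^2<1$.
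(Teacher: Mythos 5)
Your proof is correct, but it reaches the second inequality by a genuinely different and more elementary route than the paper. For the first inequality the two arguments are essentially the same bound in different clothing: the paper expands $(1-|\rho|\mu)^2\ge 0$ to get $1+\rho^2\mu^2\ge 2|\rho|\mu$ directly, while you invoke Lemma \ref{lemma5} with $a=\mu^2$, $b=1$; both land on the intermediate estimate $\frac{1}{2\mu}$ and then use $\mu^2<\mu$. (Your care with the case $\rho=0$, which Lemma \ref{lemma5} formally excludes, is appropriate.) For the second inequality the paper studies $k(\rho)=\frac{\alpha^2\rho^2+\nu}{1+\rho^2\mu^2}$ by differentiation, locates the critical point at $\rho=0$, splits into three cases according to the sign of $\alpha^2-\nu\mu^2$, and concludes $k(\rho)\le\max\{\nu,\alpha^2/\mu^2\}<\frac{1}{\mu^2}(\nu+\alpha^2)$. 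You instead split the fraction termwise, bounding $\frac{\alpha^2\rho^2}{1+\rho^2\mu^2}<\frac{\alpha^2}{\mu^2}$ by discarding the $1$ in the denominator and $\frac{\nu}{1+\rho^2\mu^2}\le\nu<\frac{\nu}{\mu^2}$ using $\mu^2<1$. Your decomposition avoids calculus entirely and makes the strictness bookkeeping transparent, at the cost of not revealing the sharper bound $\max\{\nu,\alpha^2/\mu^2\}$ that the paper's case analysis produces before relaxing it to the stated form; since only the relaxed form is used downstream (in Lemmas \ref{lemma7} and \ref{lemma8}), nothing is lost.
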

\begin{proof}
For all $\rho \in \R$ holds
\begin{equation*}
0 \leq \ds(1- \vert \rho \vert \mu )^2= 1- 2 \vert \rho \vert \mu + \vert \rho \vert ^2 \mu ^2 \Longrightarrow 1+ \vert \rho \vert ^2 \mu ^2 \geq 2 \vert \rho \vert \mu.
\end{equation*}
In addition, since $\mu>0$ the first inequality is obtained, $\dfrac{\vert \rho \vert}{1+\rho^2\mu^2} \le \dfrac{1}{2\mu} < \dfrac{1}{2\mu^2}.$

On the other hand, let $\ds k(\rho):=\frac{\alpha^2\rho^2+\nu
}{1+\rho^2\mu^2}$ then
$k'(\rho )=\dfrac{2\rho (\alpha^2-\nu \mu^2)}{(1+\rho^2\mu^2)^{2}}$ and $k $ has a single critical point $\rho =0$. Three cases are considered:
\begin{itemize}
\item $\alpha^2=\nu \mu^2$: we have $k(\rho )=\nu$, constant $\forall \rho \in \R$.
\item $\alpha^2<\nu \mu^2$: then the function $k$ reaches its global maximum value $\nu$ at $\rho=0$.
\item $\alpha^2>\nu \mu^2$: $k$ is an even and increasing function for $\rho>$0 with
  $\ds \lim_{\rho \to \pm \infty} k(\rho)=\dfrac{\alpha^{2}}{\mu^{2}}$ we have $k(\rho)\leq \ds \dfrac{\alpha^{2}}{\mu^{2}}$.
  \end{itemize}
Therefore,
\begin{equation*}
\dfrac{\alpha^2\rho^2+\nu }{1+\rho^{2}\mu^2}
  \le \max \left\{ \nu, \dfrac{\alpha^{2} }{\mu^{2} } \right\}\le \max \left\{ \dfrac{\nu}{\mu ^{2}}, \dfrac{\alpha^{2} }{\mu^{2} }\right\} < \dfrac{1}{\mu^2} \left(\nu + \alpha^2 \right).
\end{equation*}

\end{proof}

\begin{lmm}
\label{lemma7}
Let $\rho \in \R^+$. If $0<\mu<1 $ we have $\dfrac{\rho}{1+\rho^4\mu^2} < \dfrac{1}{\mu^2}.$
Also, for $\, \alpha^{2}, \nu >0$ the following inequality is valid $\dfrac{\alpha^2\rho^2+\nu }{1+\rho^{4}\mu ^2}   < \dfrac{1}{\mu^2} \left(\nu + \alpha^2\right).$
\end{lmm}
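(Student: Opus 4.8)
The plan is to handle the two inequalities separately, and in each case to avoid a direct critical-point computation (which would force awkward fourth roots of $\mu$) in favour of elementary two-sided estimates.

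For the first inequality I would split the range of $\rho$ at $\rho = 1$. On $0 < \rho \le 1$ the denominator satisfies $1 + \rho^4\mu^2 \ge 1$, so $\frac{\rho}{1+\rho^4\mu^2} \le \rho \le 1$, and since $0<\mu<1$ forces $\frac{1}{\mu^2} > 1$, the strict bound $\frac{\rho}{1+\rho^4\mu^2} < \frac{1}{\mu^2}$ follows. On $\rho > 1$ I would instead discard the $1$ in the denominator: $\frac{\rho}{1+\rho^4\mu^2} < \frac{\rho}{\rho^4\mu^2} = \frac{1}{\rho^3\mu^2}$, and $\rho^3 > 1$ then gives $\frac{1}{\rho^3\mu^2} < \frac{1}{\mu^2}$. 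Combining the two cases proves the first claim.

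For the second inequality I would split the numerator and bound each piece, writing $\frac{\alpha^2\rho^2+\nu}{1+\rho^4\mu^2} = \frac{\alpha^2\rho^2}{1+\rho^4\mu^2} + \frac{\nu}{1+\rho^4\mu^2}$. The second summand is controlled by $\frac{\nu}{1+\rho^4\mu^2} \le \nu < \frac{\nu}{\mu^2}$, using $1+\rho^4\mu^2 \ge 1$ and $\mu^2 < 1$ together with $\nu>0$. For the first summand I would invoke Lemma \ref{lemma5} with the substitution $x = \rho^2$, $a = \mu^2$, $b = 1$, which yields $\frac{\rho^2}{\mu^2\rho^4+1} \le \frac{1}{2\mu}$, whence $\frac{\alpha^2\rho^2}{1+\rho^4\mu^2} \le \frac{\alpha^2}{2\mu} < \frac{\alpha^2}{\mu^2}$, the last step again using $\mu < 1$ and $\alpha^2>0$. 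Adding the two estimates gives $\frac{\alpha^2\rho^2+\nu}{1+\rho^4\mu^2} < \frac{\alpha^2}{\mu^2} + \frac{\nu}{\mu^2} = \frac{1}{\mu^2}(\nu+\alpha^2)$.

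Since every computation is elementary, the only real work is structural rather than analytic. The point I would be careful about is to resist computing the exact maximizer of $\rho \mapsto \frac{\rho}{1+\rho^4\mu^2}$ (whose location $\rho_0 = (3\mu^2)^{-1/4}$ and value $\frac{3^{3/4}}{4\mu^{1/2}}$ are correct but unwieldy to compare with $\frac{1}{\mu^2}$) and instead rely on the cheap case split above; and to recognize that Lemma \ref{lemma5} applies to the $\rho^2$-term of the second inequality precisely after the substitution $x=\rho^2$, which is what makes that part immediate.
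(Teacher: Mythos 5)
Your proof is correct, but it takes a different route from the paper's, which handles both inequalities by a single case split at $\rho=1$: for $\rho\ge 1$ the paper observes $\rho^4\ge\rho^2$, so $1+\rho^4\mu^2\ge 1+\rho^2\mu^2$ and both bounds follow at once from Lemma \ref{lemma6}; for $0<\rho<1$ it bounds the quotients crudely by $1$ and $\alpha^2+\nu$ respectively. You treat the first inequality similarly (splitting at $\rho=1$, though on $\rho>1$ you discard the $1$ in the denominator rather than invoking Lemma \ref{lemma6}), but for the second inequality you avoid the case split entirely: you decompose the numerator and apply Lemma \ref{lemma5} with $x=\rho^2$, $a=\mu^2$, $b=1$ to get $\frac{\alpha^2\rho^2}{1+\rho^4\mu^2}\le\frac{\alpha^2}{2\mu}$, then add $\frac{\nu}{1+\rho^4\mu^2}\le\nu$. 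Both arguments are elementary and valid; the paper's buys uniformity (one mechanism, reduction to the already-proved $\rho^2$ case, covering both claims), while yours buys a sharper intermediate estimate ($\frac{\alpha^2}{2\mu}+\nu$ in place of $\frac{1}{\mu^2}(\alpha^2+\nu)$) and exhibits Lemma \ref{lemma5} as the natural tool here — the same lemma the paper only deploys later, in the proof of Lemma \ref{lemma8}. Your closing remark about the exact maximizer $\rho_0=(3\mu^2)^{-1/4}$ is accurate and sensibly avoided.
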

\begin{proof}
For all $\rho \geq 1$ the lemma \ref{lemma6} is used and the proof is immediate, since
\begin{equation*}
\dfrac{\rho}{1+\rho^4\mu^2} \le \dfrac{\rho }{1+\rho^2\mu^2} < \dfrac{1}{2 \, \mu ^2} < \dfrac{1}{ \mu^2}, \qquad \dfrac{\alpha^2\rho^2+\nu }{1+\rho^{4}\mu^2} \le \dfrac{\alpha^2\rho^2+\nu }{1+\rho^{2}\mu^2}
< \dfrac{1}{\mu^2} \left(\nu + \alpha^2\right).
\end{equation*}

On the other hand, for the case $0<\rho<1$, we have,
\begin{equation*}
\dfrac{\rho}{1+\rho^4\mu^2} < \dfrac{1}{1+\rho^4\mu^2} < 1 < \dfrac{1}{ \mu^2} , \qquad \dfrac{\alpha^2\rho^2+\nu }{1+\rho^{4}\mu^2} < \dfrac{\alpha^2+\nu }{1+\rho^ {4}\mu^2}< \alpha^2+\nu<\dfrac{1}{\mu^2}(\alpha^2+\nu).
\end{equation*}
\end{proof}

\begin{lmm} 
\label{lemma8}
Let $\bbeta \in \R^n$; $\alpha^2,\nu,t_0 >0$; $0<\mu <1 $ and $R_{\mu}^{i}$ with $i=1,2,3$ given by \eqref{familyR}. So
\begin{equation*}
\left\vert  R_{\mu}^{i}(\bxi) \right\vert < \dfrac{M_i}{\mu^2}, \quad i=1,2,3,
\end{equation*}
where
\begin{equation}
\label{M1}
M_1= \max \left\{2 \,\nu + 2 \,\alpha^2+ \sqrt{n} \left\|\bbeta\right\|_\infty ; \dfrac{2}{ t_0} + \dfrac{\sqrt{n} \left\|\bbeta\right\|_\infty}{ \nu \, t_0}\right\},
\end{equation}
\begin{equation}
\label{M2}
M_2= \max \left\{2 \, \nu + 2 \, \alpha^2+ 2 \,\sqrt{n} \left\|\bbeta\right\|_\infty ; \dfrac{2}{ t_0} + \dfrac{2 \, \sqrt{n} \left\|\bbeta\right\|_\infty}{ \nu \, t_0}\right\},
\end{equation}
\begin{equation}
\label{M3}
M_3= \max \left\{ (\alpha^2+ \nu) \left[8+\frac{4 \, \sqrt{n}\left\|\bbeta\right\|_\infty}{\sqrt{\alpha^2 \, \nu}}\right] ; \dfrac{8}{t_0}+\frac{4 \,\sqrt{n}\left\|\bbeta\right\|_\infty}{ t_0 \sqrt{\alpha^2 \, \nu}} \right\}.
\end{equation}
\end{lmm}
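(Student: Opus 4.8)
The plan is to reduce the bound on each multiplier $R_{\mu}^{i}(\bxi)$ to a one–variable estimate in $\rho:=\|\bxi\|$ and then read off the constants $M_i$ from the auxiliary lemmas. Writing $z(\bxi)=\alpha^2\rho^2+\nu+i\,\bbeta\cdot\bxi$ with $\mathrm{Re}(z(\bxi))=\alpha^2\rho^2+\nu>0$, I would first apply Lemma~\ref{lemma1} with $\omega=z(\bxi)t_0$ to replace the complex denominator by a real one,
\[
|\Lambda(\bxi)|=\frac{|z(\bxi)|}{\,|1-e^{-z(\bxi)t_0}|\,}\le \frac{|z(\bxi)|}{1-e^{-(\alpha^2\rho^2+\nu)t_0}}.
\]
Next I would estimate the numerator by the triangle inequality together with Cauchy--Schwarz, $|z(\bxi)|\le(\alpha^2\rho^2+\nu)+|\bbeta\cdot\bxi|\le(\alpha^2\rho^2+\nu)+\sqrt{n}\,\|\bbeta\|_\infty\,\rho$, so that $|R_{\mu}^{i}(\bxi)|$ splits into a \emph{diffusion--reaction part} $\frac{\alpha^2\rho^2+\nu}{(1-e^{-(\alpha^2\rho^2+\nu)t_0})\,D_i}$ and a \emph{convection part} $\frac{\sqrt{n}\,\|\bbeta\|_\infty\,\rho}{(1-e^{-(\alpha^2\rho^2+\nu)t_0})\,D_i}$, where $D_i$ is the stabilizing denominator $1+\mu^2\rho^2$, $1+\mu^2\rho^4$ or $e^{\mu^2\rho^2/4}$ from \eqref{familyR}.

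The second step is a case analysis on $s t_0$ with $s:=\alpha^2\rho^2+\nu\ge\nu$, which is precisely what Lemma~\ref{lemma2} supplies: if $st_0\ge1$ then $\frac{1}{1-e^{-st_0}}<2$, while if $st_0<1$ then $\frac{st_0}{1-e^{-st_0}}<2$, i.e. $\frac{1}{1-e^{-st_0}}<\frac{2}{st_0}\le\frac{2}{\nu t_0}$. Substituting these alternatives converts the diffusion--reaction part into $\frac{2(\alpha^2\rho^2+\nu)}{D_i}$ (resp. $\frac{2/t_0}{D_i}$) and the convection part into $\frac{2\sqrt{n}\|\bbeta\|_\infty\rho}{D_i}$ (resp. $\frac{2\sqrt{n}\|\bbeta\|_\infty}{\nu t_0}\cdot\frac{\rho}{D_i}$). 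I would treat the two branches separately and take the maximum at the end, which is exactly the structure of the $\max\{\,\cdot\,;\,\cdot\,\}$ appearing in \eqref{M1}--\eqref{M3}.

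For $i=1,2$ I would finish with the polynomial estimates already proved: Lemma~\ref{lemma6} gives $\frac{\alpha^2\rho^2+\nu}{1+\mu^2\rho^2}<\frac{\nu+\alpha^2}{\mu^2}$ and $\frac{\rho}{1+\mu^2\rho^2}\le\frac{1}{2\mu}$, and Lemma~\ref{lemma7} gives the analogous bounds with $1+\mu^2\rho^4$; since $0<\mu<1$, every stray constant or factor $1/\mu$ is upgraded to $1/\mu^2$. Collecting the $st_0\ge1$ and $st_0<1$ branches reproduces $M_1$ and $M_2$ term by term. For $i=3$ the diffusion--reaction part is controlled by Lemma~\ref{lemma3} in the weakened form $\frac{\rho^2}{e^{\mu^2\rho^2/4}}<\frac{4}{\mu^2}$ (which follows since $1-e^{-\rho^2}<1$), yielding $\frac{\alpha^2\rho^2+\nu}{e^{\mu^2\rho^2/4}}<\frac{4(\alpha^2+\nu)}{\mu^2}$ after absorbing the $\nu$–term with $\mu<1$.

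The convection part for $i=3$ is the one place where an extra idea is needed, and it is the main obstacle: estimating $\frac{\rho}{e^{\mu^2\rho^2/4}}$ directly would give a factor $1/\mu$ and no $\sqrt{\alpha^2\nu}$, so it cannot match \eqref{M3}. Instead I would factor
\[
\frac{\rho}{(1-e^{-st_0})\,e^{\mu^2\rho^2/4}}=\frac{\rho}{\alpha^2\rho^2+\nu}\cdot\frac{\alpha^2\rho^2+\nu}{(1-e^{-st_0})\,e^{\mu^2\rho^2/4}},
\]
apply Lemma~\ref{lemma5} with $a=\alpha^2,\ b=\nu$ to the first factor to extract $\frac{1}{2\sqrt{\alpha^2\nu}}$, and recognize the second factor as the diffusion--reaction quantity already bounded. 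This is what produces the term $\frac{4\sqrt{n}\|\bbeta\|_\infty}{\sqrt{\alpha^2\nu}}$ in $M_3$; taking the maximum over $st_0\gtrless1$ (with the crude constants in the $st_0<1$ branch absorbed into $8/t_0$ and $\tfrac{4\sqrt{n}\|\bbeta\|_\infty}{t_0\sqrt{\alpha^2\nu}}$ via $\mu<1$) gives the stated bound. Apart from this factorization, every inequality is furnished by Lemmas~\ref{lemma1}--\ref{lemma7}, and the only genuine care required is the bookkeeping of constants so that the sum of the two parts in each branch equals the prescribed $M_i$.
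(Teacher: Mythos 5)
Your proposal is correct and follows essentially the same route as the paper: Lemma \ref{lemma1} to realize the denominator, the triangle/Cauchy--Schwarz split of $|z(\bxi)|$ into diffusion--reaction and convection parts, the case analysis on $(\alpha^2\|\bxi\|^2+\nu)t_0\gtrless 1$ via Lemma \ref{lemma2}, Lemmas \ref{lemma6}--\ref{lemma7} for $i=1,2$, and for $i=3$ the same factorization through $\frac{\|\bxi\|}{\alpha^2\|\bxi\|^2+\nu}$ with Lemma \ref{lemma5} to produce the $\sqrt{\alpha^2\nu}$ factor. The only cosmetic difference is that you absorb the paper's separate use of Lemmas \ref{lemma3} and \ref{lemma4} into a single weakened bound $\|\bxi\|^2 e^{-\mu^2\|\bxi\|^2/4}<4/\mu^2$, and the constants you trace match \eqref{M1}--\eqref{M3} term by term.
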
 
%
\begin{proof}
Since the three regularization operators given in \eqref{familyR} contain the expression $\Lambda(\bxi)$ defined in \eqref{Lambda}, we begin by bounding this operator. The lemma \ref{lemma1} applied to the absolute value of \eqref{Lambda} is used and we immediately obtain, 
\begin{equation}
\label{D_ineq}
\left\vert \Lambda(\bxi)\right\vert = \left\vert \dfrac{z(\bxi)}{1-e^{-z(\bxi) \, t_0}}\right\vert 
=\left\vert \dfrac{\alpha^2 \left\|\bxi\right\|^2 + i\,\bbeta \cdot \bxi + \nu}{1-e^{-(\alpha^2 \left\|\bxi\right\|^2 + i\,\bbeta \cdot \bxi + \nu) \, t_0}}\right\vert
\le 
\left\vert \dfrac{\alpha^2 \left\|\bxi\right\|^2 + i\,\bbeta \cdot \bxi + \nu}{1-e^{-(\alpha^2 \left\|\bxi\right\|^2 + \nu) \, t_0}}\right\vert.
\end{equation}
\setlength{\leftskip}{0pt}
\setlength{\leftskip}{0pt}

%
\emph{If $(\alpha^2 \left\|\bxi\right\|^2 + \nu) \, t_0 \ge 1$ } : From the inequality \eqref{D_ineq}, using the triangular inequality, the lemma \ref{lemma2} and the lemma \ref{lemma6}, we have,
%

%
\begin{equation} \label{part1_1}
\begin{split}
\phantom{space} \left|R_{\mu}^{1}(\bxi)\right| &= \left|\dfrac{\Lambda(\bxi)}{1+\left\|\bxi\right\|^2\mu^2}\right|\leq\dfrac{\vert \alpha^2 \, \left\|\bxi\right\|^2 + \nu +  \bbeta \cdot \bxi \, i \vert}{(1-e^{-(\alpha^2 \left\|\bxi\right\|^2+\nu ) \, t_0})(1+\left\|\bxi\right\|^2\mu^2)} \\
&< 2 \left( \dfrac{\alpha^2 \left\|\bxi\right\|^2 + \nu }{1 + \left\|\bxi\right\|^2 \mu^2} + \dfrac{\left\|\bbeta\right\| \left\| \xi \right\| }{1 + \left\|\bxi\right\|^2 \mu^2} \right) 
<2 \left( \dfrac{1}{\mu^2} \left(\nu+ \alpha^2\right) +\dfrac{\sqrt{n} \left\|\bbeta\right\|_\infty}{2 \mu^2}\right) \\
& =  \dfrac{1}{\mu^2}\left(2 \nu + 2 \alpha^2+ \sqrt{n} \left\|\bbeta\right\|_\infty \right). 
\end{split}
\end{equation}
Similarly, from the inequality \eqref{D_ineq}, using the triangular inequality, the lemma \ref{lemma2} and the lemma \ref{lemma7}, we obtain,
\begin{equation} \label{part1_2}
\begin{split}
\phantom{space} \left|R_{\mu}^{2}(\bxi)\right| &= \left|\dfrac{\Lambda(\bxi)}{1+\left\|\bxi\right\|^4\mu^2}\right|\leq\dfrac{\vert \alpha^2 \, \left\|\bxi\right\|^2 + \nu +  \bbeta \cdot \bxi \, i \vert}{(1-e^{-(\alpha^2 \left\|\bxi\right\|^2+\nu ) \, t_0})(1+\left\|\bxi\right\|^4\mu^2)} \\
&< 2 \left( \dfrac{\alpha^2 \left\|\bxi\right\|^2 + \nu }{1 + \left\|\bxi\right\|^4 \mu^2} + \dfrac{\left\|\bbeta\right\| \left\| \xi \right\| }{1 + \left\|\bxi\right\|^4 \mu^2} \right) 
<2 \left( \dfrac{1}{\mu^2} \left(\nu+ \alpha^2\right) +\dfrac{\sqrt{n} \left\|\bbeta\right\|_\infty}{\mu^2}\right) \\
& =  \dfrac{2}{\mu^2}\left( \nu +  \alpha^2+  \sqrt{n} \left\|\bbeta\right\|_\infty \right). 
\end{split}
\end{equation}
Finally, from the inequality \eqref{D_ineq}, using the triangular inequality, the lemmas \ref{lemma2}, \ref{lemma3}, \ref{lemma4} and \ref{lemma5} hold that,
\begin{equation} \label{part1_3}
\begin{split}
\phantom{space} \left|R_{\mu}^{3}(\bxi)\right| &= \left|\dfrac{\Lambda(\bxi)}{e^{(\left\|\bxi\right\|^2\mu^2)/4}}\right|\leq\dfrac{\vert \alpha^2 \, \left\|\bxi\right\|^2 + \nu +  \bbeta \cdot \bxi \, i \vert}{(1-e^{-(\alpha^2 \left\|\bxi\right\|^2+\nu ) \, t_0})(e^{(\left\|\bxi\right\|^2\mu^2)/4})} \\
&< 2 \left( \dfrac{\alpha^2 \left\|\bxi\right\|^2 + \nu +\left\|\bbeta\right\| \left\| \xi \right\| }{e^{(\left\|\bxi\right\|^2\mu^2)/4}} \right) \dfrac{\left\|\bxi\right\|^2}{1-e^{-\left\|\bxi\right\|^2}}\dfrac{1-e^{-\left\|\bxi\right\|^2}}{\left\|\bxi\right\|^2}\\
&< \dfrac{8}{\mu^2} (\alpha^2 \left\|\bxi\right\|^2 + \nu) \left( 1 +\frac{\left\|\bbeta\right\| \left\| \xi \right\|}{\alpha^2 \left\|\bxi\right\|^2 + \nu}  \right) \dfrac{1-e^{-\left\|\bxi\right\|^2}}{\left\|\bxi\right\|^2}\\
&= \dfrac{8}{\mu^2} \left(\dfrac{\alpha^2 \left\|\bxi\right\|^2 (1-e^{-\left\|\bxi\right\|^2})}{\left\|\bxi\right\|^2} + \nu \, \dfrac{1-e^{-\left\|\bxi\right\|^2}}{\left\|\bxi\right\|^2}\right) \left( 1 +\frac{\left\|\bbeta\right\| \left\| \xi \right\|}{\alpha^2 \left\|\bxi\right\|^2 + \nu}  \right) \\
& <  \dfrac{8}{\mu^2}(\alpha^2+ \nu) \left[1+\frac{\sqrt{n}\left\|\bbeta\right\|_\infty}{2\sqrt{\alpha^2 \, \nu}}\right]. 
\end{split}
\end{equation}

\setlength{\leftskip}{0pt}
\setlength{\leftskip}{0pt}

%
\emph{If $(\alpha^2 \left\|\bxi\right\|^2 + \nu) \, t_0 \in (0,1)$ } : From the inequality \eqref{D_ineq}, using the triangular inequality, the lemma \ref{lemma2} and the lemma \ref{lemma6}, we have,
%

%
\begin{equation} \label{part2_1}
\begin{split}
\phantom{space} \left|R_{\mu}^{1}(\bxi)\right| &= \left|\dfrac{\Lambda(\bxi)}{1+\left\|\bxi\right\|^2\mu^2}\right| \leq \dfrac{\vert \alpha^2 \, \left\|\bxi\right\|^2 + \nu +  \bbeta \cdot \bxi \, i \vert}{(1-e^{-(\alpha^2 \left\|\bxi\right\|^2+\nu ) \, t_0})(1+\left\|\bxi\right\|^2\mu^2)} \\
&=
\dfrac{(\alpha^2 \, \left\|\bxi\right\|^2 + \nu) \, t_0}{1-e^{-(\alpha^2 \left\|\bxi\right\|^2+\nu) \, t_0}}\left(\dfrac{1}{(1+\left\|\bxi\right\|^2\mu^2) \, t_0} + \dfrac{\left|\bbeta \cdot \bxi \, i \right|}{(1+\left\|\bxi\right\|^2\mu^2)(\alpha^2 \, \left\|\bxi\right\|^2 + \nu) \, t_0} \right) \\
&<
2 \left(\dfrac{1}{ t_0} + \dfrac{\sqrt{n} \left\|\bbeta\right\|_\infty}{2\, \mu^2 \,\nu \, t_0} \right) 
<
\dfrac{2}{\mu^2} \left(\dfrac{1}{ t_0} + \dfrac{\sqrt{n} \left\|\bbeta\right\|_\infty}{2\, \nu \, t_0} \right). 
\end{split}
\end{equation}
Similarly, from the inequality \eqref{D_ineq}, using the triangular inequality, the lemma \ref{lemma2} and the lemma \ref{lemma7}, we obtain,
\begin{equation} \label{part2_2}
\begin{split}
\phantom{space} \left|R_{\mu}^{2}(\bxi)\right| &= \left|\dfrac{\Lambda(\bxi)}{1+\left\|\bxi\right\|^4\mu^2}\right| \leq \dfrac{\vert \alpha^2 \, \left\|\bxi\right\|^2 + \nu +  \bbeta \cdot \bxi \, i \vert}{(1-e^{-(\alpha^2 \left\|\bxi\right\|^2+\nu ) \, t_0})(1+\left\|\bxi\right\|^4\mu^2)} \\
&=
\dfrac{(\alpha^2 \, \left\|\bxi\right\|^2 + \nu) \, t_0}{1-e^{-(\alpha^2 \left\|\bxi\right\|^2+\nu) \, t_0}}\left(\dfrac{1}{(1+\left\|\bxi\right\|^4\mu^2) \, t_0} + \dfrac{\left|\bbeta \cdot \bxi \, i \right|}{(1+\left\|\bxi\right\|^4\mu^2)(\alpha^2 \, \left\|\bxi\right\|^2 + \nu) \, t_0} \right) \\
&<
2 \left(\dfrac{1}{ t_0} + \dfrac{\sqrt{n} \left\|\bbeta\right\|_\infty}{ \mu^2 \,\nu \, t_0} \right) 
<
\dfrac{2}{\mu^2} \left(\dfrac{1}{ t_0} + \dfrac{\sqrt{n} \left\|\bbeta\right\|_\infty}{ \nu \, t_0} \right). 
\end{split}
\end{equation}
Finally, from the inequality \eqref{D_ineq}, using the triangular inequality, the lemmas \ref{lemma2}, \ref{lemma3}, \ref{lemma4} and \ref{lemma5} hold that,
\begin{equation} \label{part3_3}
\begin{split}
\phantom{space} \left|R_{\mu}^{3}(\bxi)\right| &= \left|\dfrac{\Lambda(\bxi)}{e^{(\left\|\bxi\right\|^2\mu^2)/4}}\right|  \leq  \dfrac{\vert \alpha^2 \, \left\|\bxi\right\|^2 + \nu +  \bbeta \cdot \bxi \, i \vert  \, (\alpha^2 \, \left\|\bxi\right\|^2 + \nu) \, t_0}{(1-e^{-(\alpha^2 \left\|\bxi\right\|^2+\nu ) \, t_0}) \, (e^{(\left\|\bxi\right\|^2\mu^2)/4}) \, (\alpha^2 \, \left\|\bxi\right\|^2 + \nu) \, t_0} \\
&< 2 \left(\dfrac{\vert \alpha^2 \, \left\|\bxi\right\|^2 + \nu +  \bbeta \cdot \bxi \, i \vert }{(e^{(\left\|\bxi\right\|^2\mu^2)/4} \, (\alpha^2 \, \left\|\bxi\right\|^2 + \nu) \, t_0}\right) \dfrac{\left\|\bxi\right\|^2}{1-e^{-\left\|\bxi\right\|^2}}\dfrac{1-e^{-\left\|\bxi\right\|^2}}{\left\|\bxi\right\|^2}\\
&< \dfrac{8}{\mu^2}  \left( \dfrac{1}{t_0} +\frac{\left\|\bbeta\right\| \left\| \xi \right\|}{(\alpha^2 \left\|\bxi\right\|^2 + \nu) \, t_0}  \right) \dfrac{1-e^{-\left\|\bxi\right\|^2}}{\left\|\bxi\right\|^2}
< \dfrac{8}{\mu^2}\left[\dfrac{1}{t_0}+\frac{\sqrt{n}\left\|\bbeta\right\|_\infty}{2 \, t_0 \sqrt{\alpha^2 \, \nu}}\right]. 
\end{split}
\end{equation}

To finish the proof of the lemma \ref{lemma8}, all that remains is to combine the expressions \eqref{part1_1}-\eqref{part3_3}.
\end{proof}

\begin{lmm} 
\label{lemma9}
Let $\bxi \in \R^n$ and $0<\mu<1 $ then the following inequality holds
\begin{equation}
\label{cota_gral}
\sup_{\left\|\bxi\right\| \in \R} \left\vert (1+ \left\|\bxi\right\|^2)^{-p/2} \left(1-\dfrac{R_{\mu}^{i}(\bxi)}{\Lambda(\bxi)}\right)\right\vert \le \max \left\{\mu^p,\mu^2\right\}, \quad i=1,2,3,
\end{equation}
where $\Lambda(\bxi)$ and $R_{\mu}^{i}(\bxi)$ are given by \eqref{Lambda} and \eqref{familyR} respectively.
\end{lmm}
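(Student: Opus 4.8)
The plan is to collapse the $n$-dimensional supremum to a one-variable estimate and then split the frequency axis into a low- and a high-frequency regime. First I would note that each filter factor $R_{\mu}^{i}(\bxi)/\Lambda(\bxi)$ depends on $\bxi$ only through $s:=\left\|\bxi\right\|$, being $1/(1+\mu^2 s^2)$, $1/(1+\mu^2 s^4)$ and $e^{-\mu^2 s^2/4}$ for $i=1,2,3$. Hence $1-R_{\mu}^{i}/\Lambda$ equals $\tfrac{\mu^2 s^2}{1+\mu^2 s^2}$, $\tfrac{\mu^2 s^4}{1+\mu^2 s^4}$ and $1-e^{-\mu^2 s^2/4}$, respectively; in every case it is real, nonnegative and lies in $[0,1)$, so the absolute value in \eqref{cota_gral} may be dropped and the task reduces to
\begin{equation*}
\sup_{s\ge 0}\, (1+s^2)^{-p/2}\left(1-\frac{R_{\mu}^{i}(\bxi)}{\Lambda(\bxi)}\right)\le \max\{\mu^p,\mu^2\}.
\end{equation*}

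Next I would record two elementary majorizations of the bias valid for all $s\ge0$: from $\tfrac{x}{1+x}\le\min\{1,x\}$ and $1-e^{-y}\le\min\{1,y\}$ one obtains $1-R_{\mu}^{1}/\Lambda,\ 1-R_{\mu}^{3}/\Lambda\le\min\{1,\mu^2 s^2\}$ and $1-R_{\mu}^{2}/\Lambda\le\min\{1,\mu^2 s^4\}$. I would then split at the transition frequency $s_i$ where $\mu^2 s^{k_i}=1$, so $s_1=s_3=\mu^{-1}$ and $s_2=\mu^{-1/2}$. On the high-frequency side $s\ge s_i$ I bound the bias by $1$; since $(1+s^2)^{-p/2}$ is decreasing its contribution is at most $(1+s_i^2)^{-p/2}\le s_i^{-p}$. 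On the low-frequency side $s\le s_i$ I use the power bound $\mu^2 s^{k_i}$, so the contribution is $\mu^2 s^{k_i}(1+s^2)^{-p/2}$, whose maximum I would locate via the factorization $s^{k_i}(1+s^2)^{-p/2}=s^{k_i-p}\big(\tfrac{s^2}{1+s^2}\big)^{p/2}\le s^{k_i-p}$ (for small $p$) or via $(1+s^2)^{-p/2}\le(1+s^2)^{-k_i/2}$ when $p\ge k_i$. These one-variable estimates are of the same elementary nature as Lemmas \ref{lemma4}--\ref{lemma5}.

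For $i=1$ and $i=3$ the bookkeeping closes cleanly: the high-frequency term is $s_1^{-p}=\mu^{p}$, while the low-frequency term is $\mu^{p}$ for $p\le2$ and $\mu^{2}$ for $p\ge2$; since $0<\mu<1$ both regimes give $\mu^{\min(p,2)}=\max\{\mu^p,\mu^2\}$. The main obstacle is the case $i=2$: the fourth power $s^4$ in the bias pushes the transition frequency down to $s_2=\mu^{-1/2}$, so the high-frequency estimate yields only $s_2^{-p}=\mu^{p/2}$, and the low-frequency term $\mu^2 s^4(1+s^2)^{-p/2}$ peaks near that same frequency with a value of comparable order. Reconciling these exponents with the claimed uniform bound $\max\{\mu^p,\mu^2\}$ is therefore the delicate point of the lemma, and I would concentrate the sharpest estimates on $i=2$, scrutinizing in particular the range $0<p<4$ where the exponent balance is tightest.
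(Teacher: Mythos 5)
Your reduction to a one--variable estimate in $s=\left\|\bxi\right\|$, the explicit computation of the bias factors, and the treatment of $i=1,3$ by splitting at $s=1$ and $s=1/\mu$ with the elementary bounds $\tfrac{x}{1+x}\le\min\{1,x\}$ and $1-e^{-y}\le\min\{1,y\}$ reproduce the paper's argument (its Cases 1--3) essentially verbatim, and for $i=1,3$ your proof is complete and correct. The genuine gap is that you never close the case $i=2$: announcing that you would ``concentrate the sharpest estimates'' on the range $0<p<4$ is a plan, not a proof, and the lemma claims the bound for all three operators and all $p>0$.

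You should, however, be less deferential about that case, because your exponent bookkeeping is already telling you the truth: no sharpening can rescue the stated bound for $i=2$ when $0<p<4$. At $s=\mu^{-1/2}$ one has $1-R^{2}_{\mu}/\Lambda=\tfrac12$ and $(1+s^2)^{-p/2}\ge(2/\mu)^{-p/2}=2^{-p/2}\mu^{p/2}$, so the quantity inside the supremum is at least $2^{-1-p/2}\mu^{p/2}$, which exceeds $\max\{\mu^p,\mu^2\}=\mu^{\min\{p,2\}}$ for all sufficiently small $\mu$ precisely when $p/2<\min\{p,2\}$, i.e.\ for every $0<p<4$ (concretely, $p=1$, $\mu=10^{-2}$, $s=10$ gives a value $\approx 0.0497$ versus the claimed $\mu^{p}=0.01$). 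The correct uniform bound for $i=2$ is of order $\max\{\mu^{p/2},\mu^2\}$, exactly as your transition-frequency analysis predicts. The paper's own proof conceals this in its Case 3 ($1\le\left\|\bxi\right\|<1/\mu$), where the chain ending in $\frac{\left\|\bxi\right\|^{2-p}}{\left\|\bxi\right\|^{4}\mu^2}\le\left\|\bxi\right\|^{2-p}\mu^2$ requires $\mu^4\left\|\bxi\right\|^4\ge1$, i.e.\ $\left\|\bxi\right\|\ge1/\mu$, contradicting the hypothesis of that case. So the right resolution of your ``delicate point'' is not a cleverer estimate but a corrected (weaker) exponent for $R^2_\mu$, which then propagates into the H\"older rate of Theorem \ref{boundestimate}.
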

\begin{proof}
Let
\begin{equation*}
\Omega_i(\bxi):=(1+ \left\|\bxi\right\|^2)^{-p/2} \left(1-\dfrac{R_{\mu}^{i}(\bxi)}{\Lambda(\bxi)}\right)  , \quad i=1,2,3.
\end{equation*}
Three cases are considered for the proof.

\vspace{0.5cm}

\textbf{Case 1} $ (\left\|\bxi \right\| \geq \left\|\bxi_0\right\|:=\frac{1}{\mu})$. So

		    \begin{equation} 
        \label{caso1}
        \Omega_i(\bxi) \leq (1+ \left\|\bxi\right\|^2)^{-p/2} \leq \left\| \bxi \right\|^{-p} \leq \left\| \bxi_0 \right\|^{-p}= \mu^{p}, \quad i=1,2,3.
        \end{equation}

				
 \textbf{Case 2} $(\left\|\bxi\right\|   < 1)$. So 

		
        \begin{equation} 
        \label{caso2_1}
        \Omega_1(\bxi)= \dfrac{\left\|\bxi\right\|^2 \mu^2}{1+\left\|\bxi\right\|^2 \mu^2} (1+ \left\|\bxi\right\|^2)^{-p/2}\leq \left\|\bxi\right\|^2 \mu^2 (1+ \left\|\bxi\right\|^2)^{-p/2}\leq \mu^2,
        \end{equation}
				\begin{equation} 
        \label{caso2_2}
           \Omega_2(\bxi)= \frac{\left\|\bxi\right\|^4 \mu^2}{1+\left\|\bxi\right\|^4 \mu^2} (1+ \left\|\bxi\right\|^2)^{-p/2}\leq \left\|\bxi\right\|^4 \mu^2 (1+ \left\|\bxi\right\|^2)^{-p/2}\leq \mu^2,
        \end{equation}
				\begin{equation} 
        \label{caso2_3}
              \Omega_3(\bxi)= \left(1-e^{\frac{\left\|\bxi\right\|^2 \mu^2}{4}}\right) (1+ \left\|\bxi\right\|^2)^{-p/2}\leq 1-e^{\frac{\left\|\bxi\right\|^2 \mu^2}{4}}\leq \frac{\left\|\bxi\right\|^2 \mu^2}{4}\leq \mu^2.
        \end{equation}
				
  \textbf{Case 3} $(1 \leq  \left\| \bxi \right\| < \left\|\bxi_0\right\|:=\frac{1}{\mu})$. So 
	
				
         \begin{equation} 
         \label{caso3_1}
        \Omega_1(\bxi)= \dfrac{\left\|\bxi\right\|^2 \mu^2}{1+\left\|\bxi\right\|^2 \mu^2} (1+ \left\|\bxi\right\|^2)^{-p/2}\leq \dfrac{\left\|\bxi\right\|^{2-p} \, \mu^2}{1+\left\|\bxi\right\|^2 \mu^2}\leq \left\|\bxi\right\|^{2-p} \, \mu^2,
         \end{equation}
				 \begin{equation} 
         \label{caso3_2}
        \Omega_2(\bxi)= \dfrac{\left\|\bxi\right\|^4 \mu^2}{1+\left\|\bxi\right\|^4 \mu^2} (1+ \left\|\bxi\right\|^2)^{-p/2}\leq \dfrac{\left\|\bxi\right\|^{2-p} \, \mu^2 \, \left\|\bxi\right\|^2}{1+\left\|\bxi\right\|^4 \mu^2}\leq \dfrac{\left\|\bxi\right\|^{2-p}}{\left\|\bxi\right\|^{4}\,\mu^2} \leq \left\|\bxi\right\|^{2-p} \, \mu^2,
         \end{equation}
				\begin{equation} 
        \label{caso3_3}
              \Omega_3(\bxi)= \left(1-e^{\frac{\left\|\bxi\right\|^2 \mu^2}{4}}\right) (1+ \left\|\bxi\right\|^2)^{-p/2}\leq \left\|\bxi\right\|^{-p} \dfrac{\left\|\bxi\right\|^2 \mu^2}{4} \leq \left\|\bxi\right\|^{2-p} \, \mu^2.
        \end{equation}
				 
If $0<p\leq 2,$ you have 
       \begin{equation} 
         \label{subcaso1_3}
         \Omega_i(\bxi)\leq \left\|\bxi\right\|^{2-p} \, \mu^2  \leq \left\|\bxi_0\right\|^{2-p} \, \mu^2 =\mu^p, \quad i=1,2,3.
         \end{equation}

If $p > 2,$ follow that 
       \begin{equation} 
         \label{subcaso2_3}
        \Omega_i(\bxi)\leq \left\|\bxi\right\|^{2-p} \, \mu^2  \leq \mu^2, \quad i=1,2,3.
         \end{equation}

The expressions given by \eqref{caso1}-\eqref{subcaso2_3} are combined to obtain \eqref{cota_gral}. This concludes the proof.
\end{proof}

\begin{rmrk}
Notice that 
\begin{equation*} 
\lim_{\mu \to 0^+}\dfrac{R_{\mu}^{i}(\bxi)}{\Lambda(\bxi)}=1, \qquad i=1,2,3.
\end{equation*}
\end{rmrk}

\subsubsection{Analytical bound of error}

Now it is possible to obtain a bound for the error estimate.

\begin{defn} 
The norm in Sobolev space $H^p(\R^n)$ for $p>0$ is defined as 
\begin{equation}
\label{Boundf}
\|  f\|_{H^p(\R^n)} := \left(\, \int\limits_{\R^n}{|\widehat{f}|^2 \left(1+\left\|\bxi\right\|^2 \right)^{p}\, d\bxi} \right)^{1/2}. 
\end{equation}
\end{defn}

By means of the definition \eqref{Boundf}, a parameter choice rule is given for the proposed regularizations and an analytical expression is found for the error bound of each estimation.
This can be seen in the following result.

\begin{thrm}[Analytical bound for the estimation error]
\label{boundestimate}
Consider the inverse problem of determining the source $f(\bx)$ in (\ref{transpeqn}). Let $f_{\delta ,\mu}^{i}(\bx)$ with $i=1,2,3$ be the regularization solutions given in \eqref{ffreg2}. It is assumed that there exists $C \in \R^+$ bounding the norm of $f $ in $H^p(\R^n)$ for some $0<p<\infty$ \eqref{Boundf}.
if chosen
\begin{equation}
\label{mureg}
    \mu^2= \left(\dfrac{\delta}{\delta_M}\right)^{2/p+2} < 1,
\end{equation} 

then there exist constants $K_i \, (i=1,2,3)$ independent of $\delta$ such that
\begin{equation*}
\|  f -f_{\delta ,\mu }^{i} \|_{L^2(\R^n)} \le  K_i \, \max \left\{  \left(\dfrac{\delta}{\delta_M}\right)^{2/p+2}, \left(\dfrac{\delta}{\delta_M}\right)^{p/p+2}\right\},  \qquad i=1,2,3.
\end{equation*}
\end{thrm}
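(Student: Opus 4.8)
The plan is to pass to frequency space and exploit that, with the symmetric normalization of Definition \ref{DefTransf}, the Fourier transform is an $L^2$ isometry (Plancherel), so that $\|f-f_{\delta,\mu}^{i}\|_{L^2(\R^n)}=\|\widehat f-\widehat f_{\delta,\mu}^{i}\|_{L^2(\R^n)}$ and likewise $\|\widehat y-\widehat y_\delta\|_{L^2(\R^n)}=\|y-y_\delta\|_{L^2(\R^n)}\le\delta$. Recalling $\widehat f=\Lambda\,\widehat y$ from \eqref{illf} (exact source from exact data $y=u(\cdot,t_0)$) and $\widehat f_{\delta,\mu}^{i}=R_{\mu}^{i}\,\widehat y_\delta$ from \eqref{ffregtransf}, I would insert the intermediate term $R_{\mu}^{i}\,\widehat y$ and split the error by the triangle inequality:
\[
\widehat{f}-\widehat{f}_{\delta,\mu}^{i}
=\Lambda\,\widehat y-R_{\mu}^{i}\,\widehat y_\delta
=\underbrace{\left(1-\frac{R_{\mu}^{i}}{\Lambda}\right)\widehat f}_{\text{approximation error}}
+\underbrace{R_{\mu}^{i}\bigl(\widehat y-\widehat y_\delta\bigr)}_{\text{propagated noise}},
\]
so that $\|\widehat f-\widehat f_{\delta,\mu}^{i}\|_{L^2(\R^n)}$ is at most the sum of the two pieces.

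Next I would estimate each piece separately. For the propagated noise I would factor out the supremum of $|R_{\mu}^{i}(\bxi)|$, which Lemma \ref{lemma8} bounds by $M_i/\mu^2$ (applicable since the choice \eqref{mureg} forces $\mu<1$); combined with the noise bound this gives $\|R_{\mu}^{i}(\widehat y-\widehat y_\delta)\|_{L^2(\R^n)}\le (M_i/\mu^2)\,\delta$. For the approximation error I would invoke the source condition $\|f\|_{H^p(\R^n)}\le C$: writing $\widehat f=(1+\|\bxi\|^2)^{-p/2}(1+\|\bxi\|^2)^{p/2}\widehat f$, I would move the weight $(1+\|\bxi\|^2)^{-p/2}$ onto the filter factor $1-R_{\mu}^{i}/\Lambda$ and recognize the remaining factor as the $H^p$ density, whose $L^2$ norm is exactly $\|f\|_{H^p(\R^n)}\le C$ by \eqref{Boundf}. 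Lemma \ref{lemma9} then controls the weighted filter factor uniformly in $\bxi$ by $\max\{\mu^p,\mu^2\}$, so the approximation error is $\le C\max\{\mu^p,\mu^2\}$. Altogether
\[
\|f-f_{\delta,\mu}^{i}\|_{L^2(\R^n)}\le C\max\{\mu^p,\mu^2\}+\frac{M_i}{\mu^2}\,\delta.
\]

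Finally I would substitute the parameter choice \eqref{mureg}, $\mu^2=(\delta/\delta_M)^{2/(p+2)}$. A short exponent computation yields $\mu^p=(\delta/\delta_M)^{p/(p+2)}$ and, writing $\delta=\delta_M(\delta/\delta_M)$, the noise term scales as $\delta/\mu^2=\delta_M(\delta/\delta_M)^{p/(p+2)}$. Both contributions are therefore dominated by $\max\{(\delta/\delta_M)^{2/(p+2)},(\delta/\delta_M)^{p/(p+2)}\}$, and collecting constants gives the claim with $K_i=C+M_i\delta_M$, manifestly independent of $\delta$. Given Lemmas \ref{lemma8} and \ref{lemma9}, the theorem is essentially bookkeeping; the one genuinely delicate ingredient is the uniform control of the weighted filter factor, and this has already been isolated as Lemma \ref{lemma9}. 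The only point requiring care here is the exponent arithmetic in the parameter choice, which must align the two rates so that a single $\max$ absorbs both terms — in particular verifying that $\delta/\mu^2$ scales like $(\delta/\delta_M)^{p/(p+2)}$ rather than some other power.
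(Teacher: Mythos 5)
Your proposal is correct and follows essentially the same route as the paper: the identical splitting of $\widehat f-\widehat f_{\delta,\mu}^{i}$ into an approximation term controlled by Lemma \ref{lemma9} together with the $H^p$ bound, and a propagated-noise term controlled by Lemma \ref{lemma8}, followed by the same exponent arithmetic under the choice \eqref{mureg} and the same constant $K_i=C+\delta_M M_i$. No substantive differences.
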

\begin{proof}
\begin{equation*}
\left\| \widehat{f}(\bxi )-R_{\mu}^{i} (\bxi) \, \widehat{y}(\bxi ) \right\|_{L^2(\R^n)}
= \left\| \widehat{f}(\bxi) \left(1-\frac{R_{\mu}^{i} (\bxi)}{\Lambda(\bxi)}\right) \frac{(1+ \left\|\bxi\right\|^2)^{p/2}}{(1+ \left\|\bxi\right\|^2)^{p/2}} \right\|_{L^2(\R^n)}, 
\end{equation*}
rearranging
\begin{equation*}
\left\| \widehat{f}(\bxi )-R_{\mu}^{i} (\bxi) \, \widehat{y}(\bxi ) \right\|_{L^2(\R^n)}
\le  \sup_{\left\|\bxi\right\| \in \R} \left| (1+ \left\|\bxi\right\|^2)^{-p/2}\left(1-\frac{R_{\mu}^{i} (\bxi)}{\Lambda(\bxi)}\right)    \right| 		\left\| \widehat{f}(\bxi) (1+ \left\|\bxi\right\|^2)^{p/2} \right\|_{L^2(\R^n)}.
\end{equation*}

We use the definition of the norm in the Sobolev space $(H^p(\R^n))$ given by the expression \eqref{Boundf}, 
\begin{equation}
\label{ineq2teo2}
\left\|\widehat{f}(\bxi )-R_{\mu}^{i} (\bxi) \, \widehat{y}(\bxi ) \right\|_{L^2(\R^n)} 
\le  \sup_{\left\|\bxi\right\| \in \R} \left| (1+ \left\|\bxi\right\|^2)^{-p/2}\left(1-\frac{R_{\mu}^{i} (\bxi)}{\Lambda(\bxi)}\right)    \right|  \left\|  f (\bxi) \right\|_{H^p(\R^n)}.
\end{equation}

By the triangle inequality we have,
\begin{equation}
\label{dnormas}
 \left\| \widehat{f} - \widehat{f_{\delta ,\mu }^{i}} \right\|_{L^2(\R^n)}   \leq \left\|  \widehat{f} - R_{\mu}^{i} (\bxi) \, \widehat{y}(\bxi ) \right\|_{L^2(\R^n)}    +  \left\| R_{\mu}^{i} (\bxi) \, \widehat{y}(\bxi )  - \widehat{f_{\delta ,\mu }^{i}}\right\|_{L^2(\R^n)}.  
\end{equation}

From the inequalities (\ref{ineq2teo2})-(\ref{dnormas}) together with the definition of the regularized source \eqref{ffregtransf} in frequency space, we obtain,
\begin{equation}
\label{eq_aux_dem1}
\left\|  \widehat{f}  -\widehat{f_{\delta ,\mu }^{i}} \right\|_{L^2(\R^n)}
\le  \sup_{\left\|\bxi\right\| \in \R} \left| (1+ \left\|\bxi\right\|^2)^{-p/2}\left(1-\frac{R_{\mu}^{i}(\bxi)}{\Lambda(\bxi)}\right)    \right|  \left\|  f \right\|_{H^p(\R^n)}+\sup_{\left\|\bxi\right\| \in \R} \left|R_{\mu}^{i}(\bxi)\right|\left\|\widehat{y}-\widehat{y}_{\delta}\right\|_{L^2(\R^n)},
\end{equation}
Parseval's identity \cite{Marks09} is used in \eqref{eq_aux_dem1}, the fact that $C$ bounds $f$ in norm $H^p(\R^n)$ and the assumption that the error in the data is bounded $(\left\|y-y_{\delta }\right\|_{L^2(\R^n)}=\left\|\widehat{y}-\widehat{y }_{\delta }\right\|_{L^2(\R^n)} \leq \delta)$, then
\begin{equation}
\label{eq_aux_dem2}
\left\| f -f_{\delta ,\mu }^{i} \right\|_{L^2(\R^n)} =\left\|  \widehat{f} -\widehat{f_{\delta ,\mu }^{i}}  \right\|_{L^2(\R^n)}
\le C \, \sup_{\left\|\bxi\right\| \in \R} \left| (1+ \left\|\bxi\right\|^2)^{-p/2}\left(1-\frac{R_{\mu}^{i}(\bxi)}{\Lambda(\bxi)}\right)    \right|  + \delta \, \sup_{\left\|\bxi\right\| \in \R} \left|R_{\mu}^{i}(\bxi)\right|.
\end{equation}  

By virtue of the lemmas \ref{lemma8}, \ref{lemma9}, the expression \eqref{eq_aux_dem2} can be rewritten as,
\begin{equation}
\label{eq_aux_dem3}
\left\| f -f_{\delta ,\mu }^{i}  \right\|_{L^2(\R^n)} 
\le C \, \max\left\{\mu^2, \mu^p\right\}  + \frac{\delta}{\mu^2} \,M_i,
\end{equation}  
where $M_i$ with $i=1,2,3$ is given by the equations \eqref{M1}, \eqref{M2} and \eqref{M3}. It is used in the equation \eqref{eq_aux_dem3} $\mu^2= \left(\dfrac{\delta}{\delta_M} \right)^{2/p+2}$ \eqref{mureg} and it is obtained,
\begin{equation*}
\left\| f -f_{\delta ,\mu }^{i}  \right\|_{L^2(\R^n)} 
\le C \, \max\left\{\left(\dfrac{\delta}{\delta_M}\right)^{2/p+2}, \left(\dfrac{\delta}{\delta_M}\right)^{p/p+2}\right\}  + \delta_M  \,M_i \, \left(\dfrac{\delta}{\delta_M}\right)^{p/p+2},
\end{equation*} 
equivalently
\begin{equation}
\label{cota_final}
\left\|  f -f_{\delta ,\mu }^{i} \right\|_{L^2(\R^n)} \le  K_i \, \max \left\{  \left(\dfrac{\delta}{\delta_M}\right)^{2/p+2}, \left(\dfrac{\delta}{\delta_M}\right)^{p/p+2}\right\},
\end{equation}
where $K_i=C+\delta_M \,M_i$ with $i=1,2,3$. This concludes the proof.
\end{proof}

\begin{rmrk}
Note that the bound \eqref{cota_final} satisfies
\begin{equation*}
\| f -f_{\delta, \mu}^{i} \|_{L^2(\R^n)} \longrightarrow 0 \,\,\, \text{if} \,\,\, \delta \longrightarrow 0,
  \end{equation*}
which means that the estimate ($f_{\delta, \mu}^{i}$) converges to the source function ($f$) when the noise in the data ($\delta$) tends to $0$.
\end{rmrk}
\begin{rmrk}
Note that the case $p=\infty$ is not included in the hypotheses of the \ref{boundestimate} theorem, the reason is that in this case,
\begin{equation*}
  \| f -f_{\delta, \mu}^{i} \|_{L^2(\R^n)} =\left\| \widehat{f} -\widehat{f_{\delta ,\mu }^{i}} \right\|_{L^2(\R^n)} \le K_i < C \nrightarrow 0.
  \end{equation*}
\end{rmrk}
\begin{rmrk}
Note that the bound obtained for the regularization error \eqref{cota_final} is of H\"older type and only depends on the smoothness of the source and the parameters of the mathematical model.
\end{rmrk}
\begin{nt}
A case of special interest in the bibliography is for $p=2$. In this the expression \eqref{cota_final} is reduced to:
\begin{equation*}
\left\|  f -f_{\delta ,\mu }^{i} \right\|_{L^2(\R^n)} \le  K_i \, \sqrt{\dfrac{\delta}{\delta_M}}.
\end{equation*}
\end{nt}

\subsection{Numerical examples}

We consider concrete examples of the estimation of the source $f$ in $\R^n$ for $n=1,2,3$.
For each of the examples discussed in this section, different values are chosen for the parameters of the source identification problem $( \alpha^2,\bbeta,\nu$, $t_0)$. In addition, to simulate the noise in the data, a set of values of
standard deviation $\epsilon$. The space is discretized into a uniform $n$-dimensional mesh and a data set $\{y_{\delta_1},...,y_{\delta_N}\}$ is obtained from the evaluation of the solution
$u(\bx,t)$ at a fixed time instant given $t_0$ and adding noise, that is,
\begin{equation*}
\label{noisydata}
y_{\delta_i} = y(\bx_i) + \eta_i , \quad i=1,...,N,\,\, \bx_i \in {\cal G},
\end{equation*}
where ${\cal G}$ is a uniform discretization on $\R^n$ chosen and $\eta_i, i=1,...,N$ are realizations of the normally distributed random variable
${\eta}$ with mean 0 and deviation $\epsilon$.
By denoting $y_i=y(\bx_i), i=1,.., N$ and taking into account the noise level $\delta$ satisfying \eqref{noiselevel}, the error
\begin{equation*}
\label{discrerror}
  y -y_{\delta} =(y_1-y_{\delta_1},...,y_N-y_{\delta_N})=(\eta_1,...,\eta_N)
  \end{equation*}
 It is numerically calculated using the Simpson integration method. This calculation allows obtaining an approximate value of the error $y -y_{\delta}$ that directly depends
of the noise. It can be seen that the noise level $\delta$ is a function of the standard deviation $\epsilon$, that is, $\delta=\delta(\epsilon)$.

In practice, the maximum tolerance value for the error in the data $\delta_M$ given in \eqref{noiselevel} is obtained from the calibration, instrumentation and measurement errors of the instruments used to perform the measurements. Here, to make sure that the regularization parameter is less than 1, it is chosen as the maximum value of $\delta$ plus one unit. That is to say,
\begin{equation*}
  \delta_M= 1+ \max \{\delta_1,...,\delta_N\}.
\end{equation*}

Then $\{\widehat{y}_{\delta_1},...,\widehat{y}_{\delta_N}\}$ is calculated using the FFT transform (Fast Fourier Transform) \cite{Van92} and get the regularization solutions
$f^{i}_{\delta ,\mu}$ with $i=1,2,3$ given in \eqref{ffreg2} using the anti-fast Fourier transform \cite{Elden00,Van92}, where the regularization parameter $\mu$ is chosen according to \eqref{mureg},
that is to say,
\begin{equation*}
  \mu^2= \ds \left(\frac{\delta}{\delta_M}\right)^{2/p+2}.
  \end{equation*}
For each of the examples considered, the results of the estimated sources without regularization and those obtained after using the regularization methods presented in this article are graphed. An error table is also included for each example. These tables consider the relative errors made, in the estimation, by each one of the regularization operators used to address the ill-posed problem.
The disturbances used for the construction of the tables are  $\{\epsilon_1;...;\epsilon_5\}= \{10^{-1}, 10^{-2}, 10^{-3}; 10^{-4}; 10^{-5}\}$, these were chosen based on the values of the solution.
	
\begin{nt} 
The reader may notice that in some examples the $p$ considered for the recovery does not correspond to the space $H^p(\R^n)$ to which the source belongs, since $f$ is not in that Hilbert space. These examples were included to show that recovery is reasonable even in those cases.  
\end{nt}

\subsubsection{Examples 1D} \label{subsec:Ej_1D}

In this subsection, three examples of estimation of one-dimensional sources with different characteristics are discussed.
\begin{xmpl}
\label{example1}

For this example, the following parameters are considered $\alpha^2=2 \times 10^{-5}$; $\bbeta= 1 \times 10^{-5}$; $\nu=$1; $N=1001$ and $t_0=5$.
Also, the perturbation values used are $\epsilon \in \{0.2 ; 0.15; 0.1; $0.05\}.
Finally, the source to estimate in this case is:

\begin{equation}
\label{fuente_ex1}
f(x)=\begin{cases}
-1, \qquad & -10 \leq x<-5, \\
  1, \qquad & -5 \leq x <0, \\
  -1, \qquad & 0 \leq x <5, \\
1, \qquad & 5 \leq x \leq 10, \\
  0, \qquad & \text{in another case}.
\end{cases}
\end{equation}
\end{xmpl}

The source \eqref{fuente_ex1} retrieved in example \ref{example1} is of interest in signal theory.
This type of function turns out to be a common example in \cite{Yang11,Yang14} source retrieval problems. This is mainly due to the fact that as it is a discontinuous function, the Gibbs \cite{Elden00,Van92} phenomenon can appear. This phenomenon indicates that when a Fourier series function is developed and it is not continuous in the considered region, it is possible that there is not a good precision in the neighborhoods of the discontinuities.

As can be seen in the graphs of figure
\ref{CurvaCuadrada}, the regularization operators used smooth the inverse operator making the recovery
stable. As expected, the estimation improves for smaller values of perturbation $\epsilon$. In addition, no significant differences between the different operators are noticeable at first glance, although the operator $R^3_{\mu}$ seems to have a better performance in the neighborhoods of discontinuity points.

\vspace{-0.3cm}
\begin{figure}[H]
\begin{center}
\includegraphics[width=0.46\textwidth]{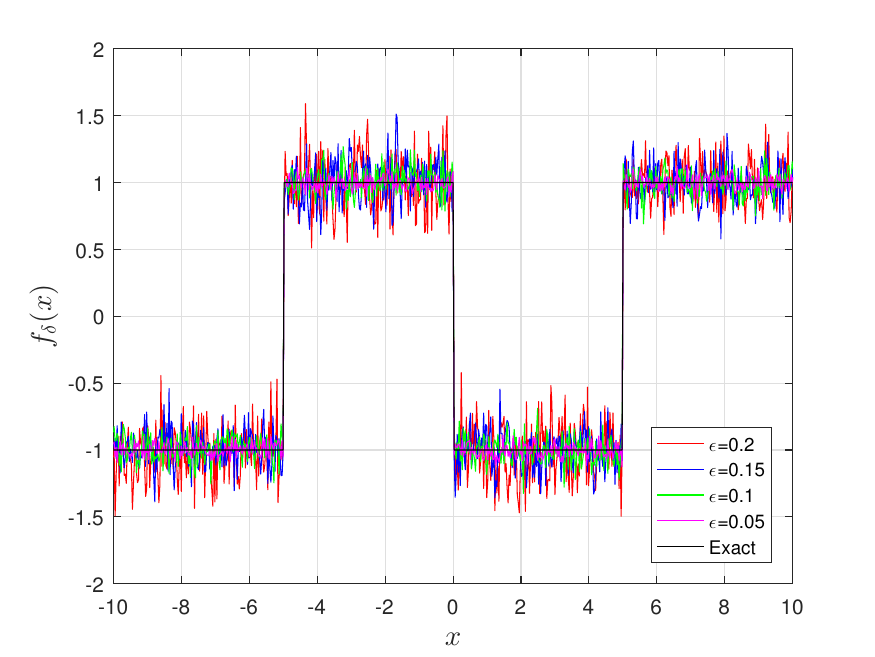}
\includegraphics[width=0.46\textwidth]{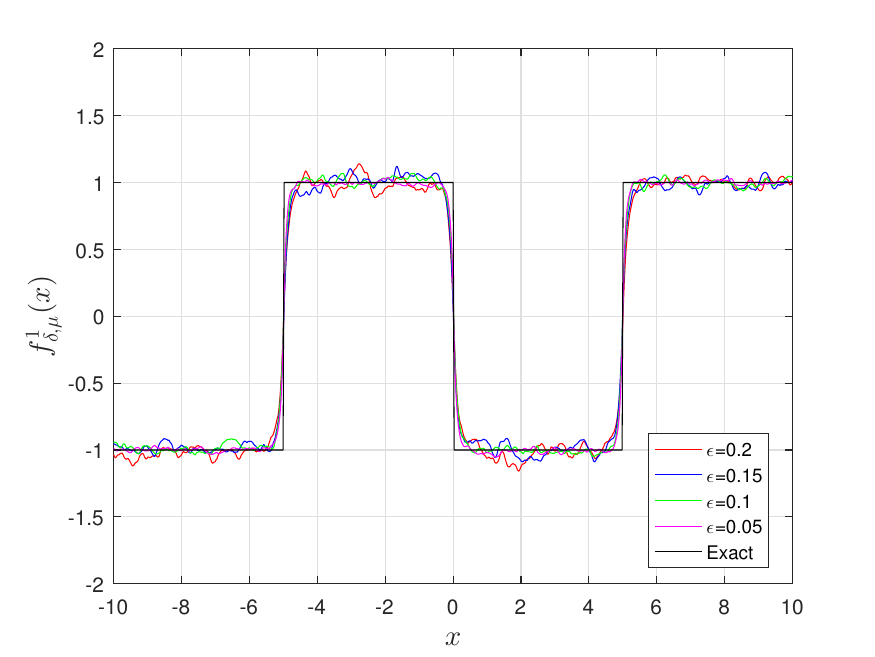}
\includegraphics[width=0.46\textwidth]{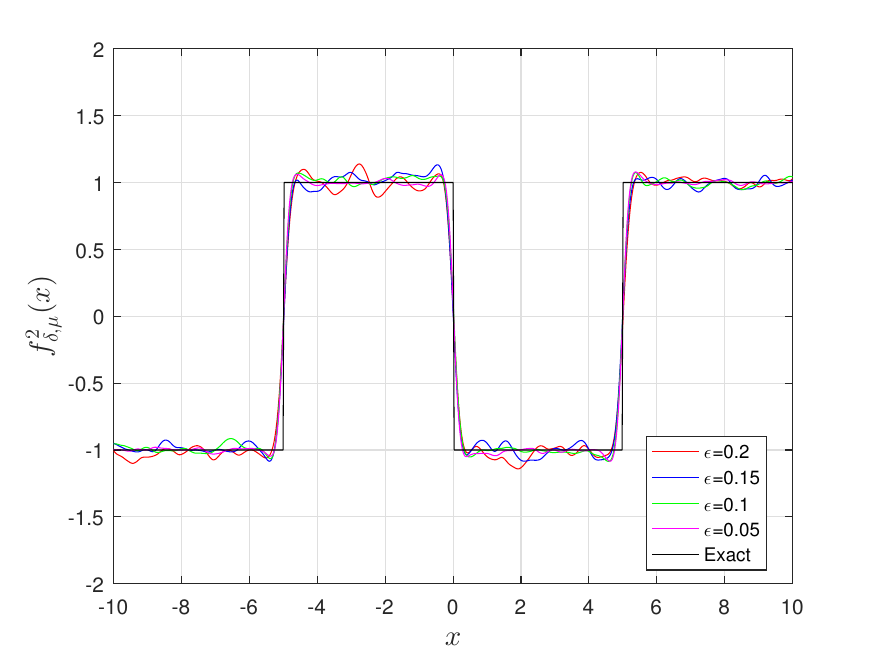}
\includegraphics[width=0.46\textwidth]{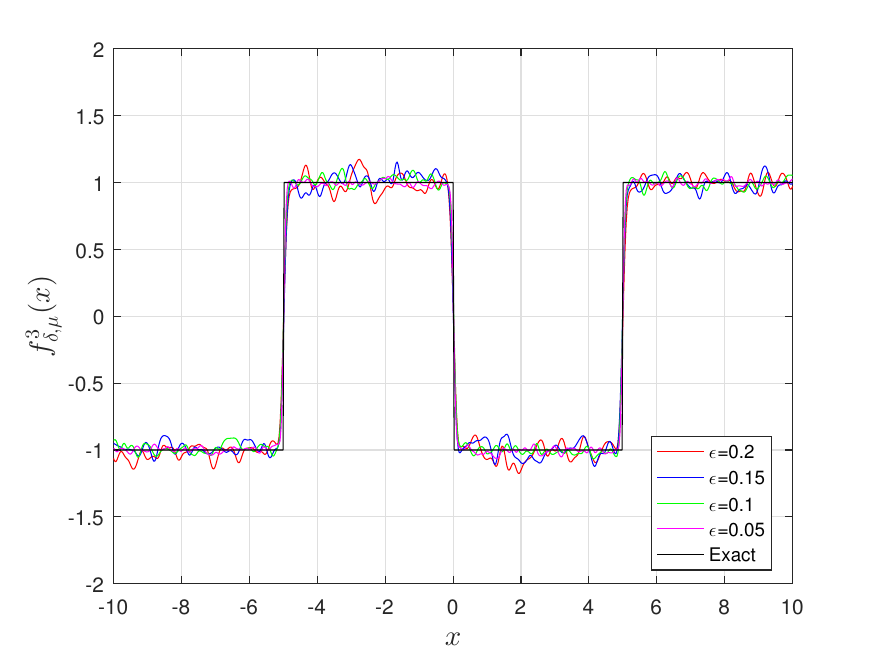}
\vspace{-0.2cm}
\caption{Example \ref{example1}: Non-regularized font with $t_0=5$ (top-left); Regularized sources with $t_0=5$ and $p=1$, using $R^{1}_\mu$ (top-right), $R^{2}_\mu$ (bottom-left), $R ^{3}_\mu$ (bottom-right) for different noise levels.}
\vspace{-0.3cm}
\label{CurvaCuadrada}
\end{center}
\end{figure}

The table \ref{tableej1} includes the relative estimation errors that also show the goodness of the methods used. As noted, for this example, the operator $R^{3}_{\mu}$ performs better.
For example, for the case $\epsilon=10^{-3}$, the relative error of the estimate of the regularized source with $R^{1}_{\mu}$ is $2.74 \%$, with $R^{2}_{\mu}$ is $8.08 \%$ and with $R^{3}_{\mu}$ it is $1.15 \%$.
\begin{table}[H]
\begin{center}
{\begin{tabular}{c}\toprule
Relative errors\\
{\begin{tabular}{lccc} \toprule
\,\, $\epsilon$ &  $ \| f-f^{1}_{\delta,\mu}\|/ \left\|f\right\|$   & $ \| f-f^{2}_{\delta,\mu}\|/ \left\|f\right\|$  & $ \| f-f^{3}_{\delta,\mu}\|/ \left\|f\right\|$   \\ \midrule
$10^{-1}$ & 0.1310  & 0.1472  & 0.1137 \\
$10^{-2}$ & 0.0888  & 0.1217   & 0.0706 \\
$10^{-3}$  & 0.0549 &	0.0996  & 0.0365\\
$10^{-4}$  & 0.0274  & 0.0808  & 0.0115   \\
$10^{-5}$  & 0.0090  & 0.0635 & 0.0026 \\\bottomrule
\end{tabular}}
\end{tabular}}
\end{center}
\vspace{-0.30cm}
\caption{Example \ref{example1}: Relative estimation errors for $ t_0 = 1 $ and $ p = 1.$}
\vspace{-0.30cm}
\label{tableej1}
\end{table}


\begin{xmpl}
\label{example2}

For this example, the following parameters are considered $\alpha^2=1$; $\bbeta= 0 $; $\nu= $0; $N=1001$ and $t_0=0.1$.
Furthermore, the perturbation values used are $\epsilon \in \{0.01 ; 0.007; 0.003; $0.001\}.
Finally, the source to estimate in this case is:

\begin{equation}
\label{fuente_ex2}
f(x)=\begin{cases} 
\left(- \frac{x^3}{4} + \frac{3x}{2}\right) e^{\frac{-x^2}{4}}, \qquad & -10 \leq x \leq 10, \\
 0, \qquad & \text{in another case}. 
\end{cases}
\end{equation}
\end{xmpl}

The source \eqref{fuente_ex2} recovered in example \ref{example2} is of interest in heat transfer problems. See for example, \cite{Dou09b}. This type of function turns out to be a common example in source recovery problems in heat transfer processes \cite{Yang11,Yang14}. The example \ref{example2} is interesting, since this type of sources have already been recovered by means of different approaches in simpler problems where it is used, instead of the full parabolic equation, the heat equation where only diffusion is taken into account \cite{Yan08,Yan10,Yan09}.

Table \ref{tableej2} includes the relative estimation errors that also show the goodness of the methods used. As noted, for this example, the operator $R^{2}_{\mu}$ performs better.
For example, for the case $\epsilon=10^{-3}$, the relative error of the estimate of the regularized source with $R^{1}_{\mu}$ is $5.89 \%$, with $R^{2}_{\mu}$ is $4.82 \%$ and with $R^{3}_{\mu}$ it is $6.48 \%$.

As it can be seen in the graphs of figure \ref{CurvaRara}, the regularization operators used, again, smooth the inverse operator making the recovery stable. In this case, the stabilization effect is much more noticeable to the naked eye (because smaller values of disturbances were considered); due to this and for comparative purposes, a smaller scale was taken for the sources retrieved with the regularization operators.
Once again it can be seen that the estimation improves for smaller values of disturbance $\epsilon$. Also, no significant differences between the different operators are noticeable to the naked eye, although the operator $R^2_{\mu}$ seems to have, in this example, a better overall performance.

\begin{table}[H]
\begin{center}
{\begin{tabular}{c}\toprule
Relative errors\\
{\begin{tabular}{lccc} \toprule
\,\, $\epsilon$ &  $ \| f-f^{1}_{\delta,\mu}\|/ \left\|f\right\|$   & $ \| f-f^{2}_{\delta,\mu}\|/ \left\|f\right\|$  & $ \| f-f^{3}_{\delta,\mu}\|/ \left\|f\right\|$   \\ \midrule
$10^{-1}$  & 0.8387  & 0.6170  & 0.9686 \\
$10^{-2}$   & 0.1746  & 0.1074   & 0.1822 \\
$10^{-3}$   & 0.0589 &	0.0482  & 0.0648\\
$10^{-4}$   & 0.0474  & 0.0471  & 0.0489   \\
$10^{-5}$   & 0.0365  & 0.0321 & 0.0389 \\\bottomrule
\end{tabular}}
\end{tabular}}
\end{center}
\vspace{-0.30cm}
\caption{Example \ref{example2}: Relative estimation errors for $ t_0 = 1 $ and $ p = 1.$}
\vspace{-0.30cm}
\label{tableej2}
\end{table}

\vspace{-0.5cm}
\begin{figure}[H]
\begin{center}
\includegraphics[width=0.46\textwidth]{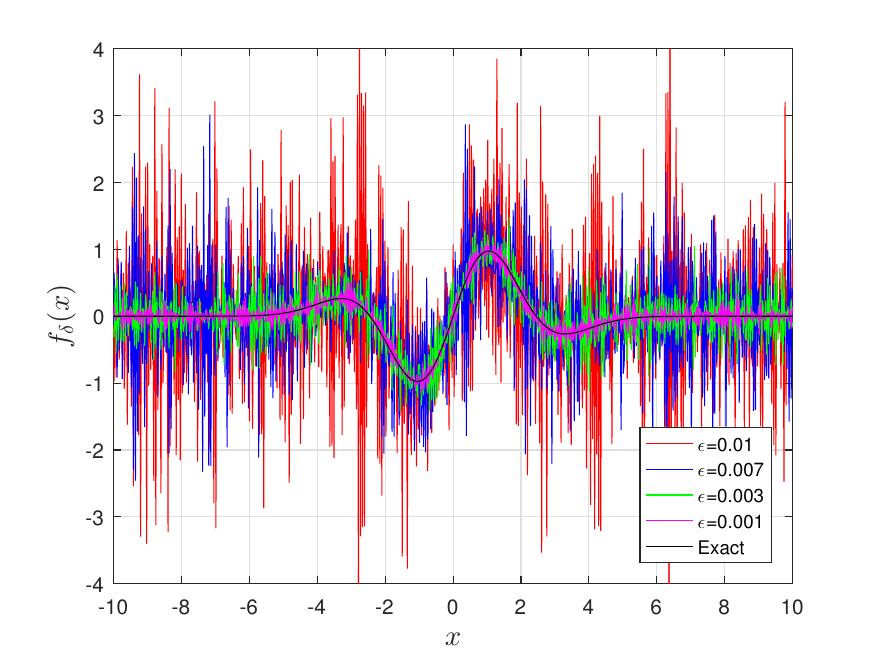}
\includegraphics[width=0.46\textwidth]{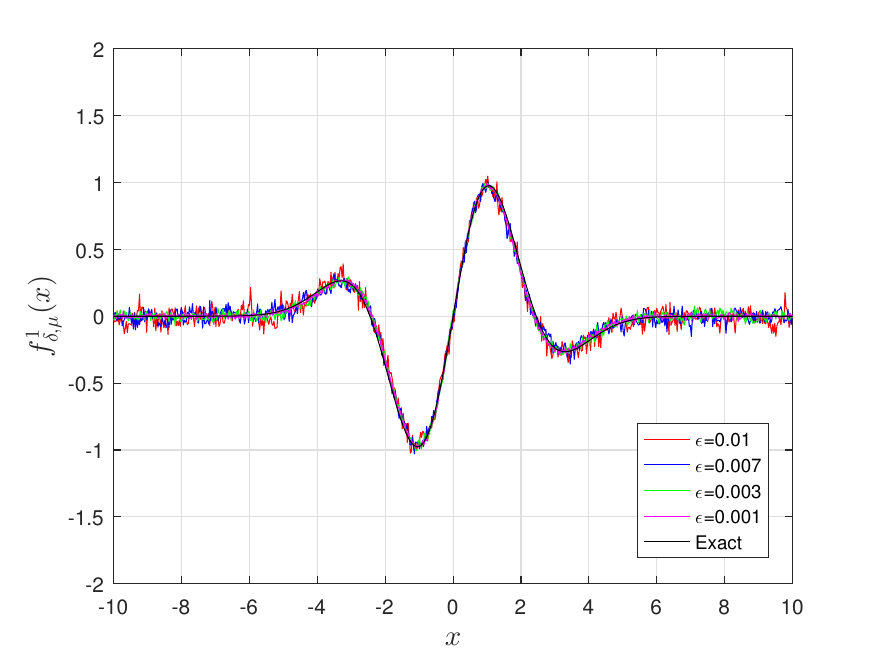}
\includegraphics[width=0.46\textwidth]{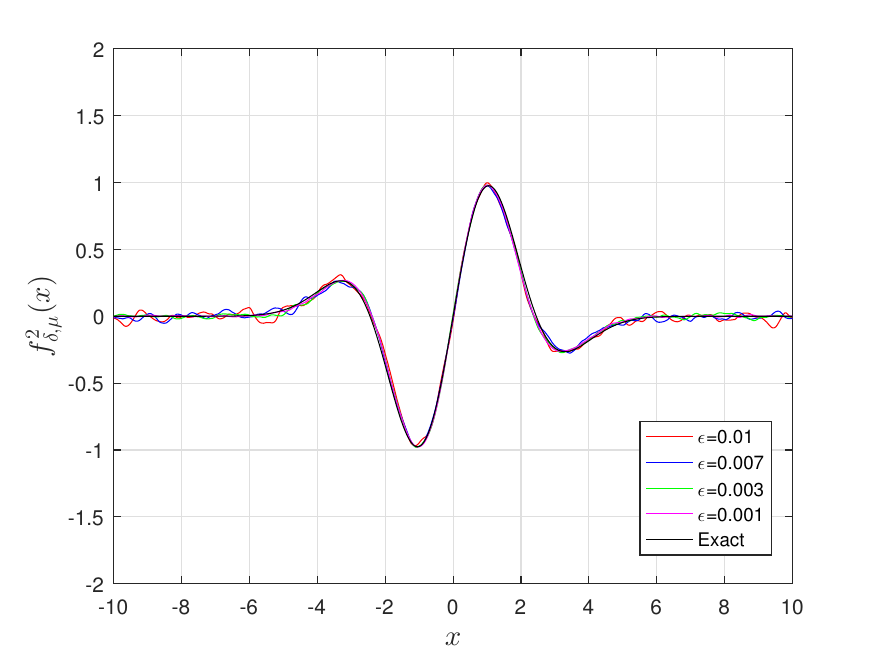}
\includegraphics[width=0.46\textwidth]{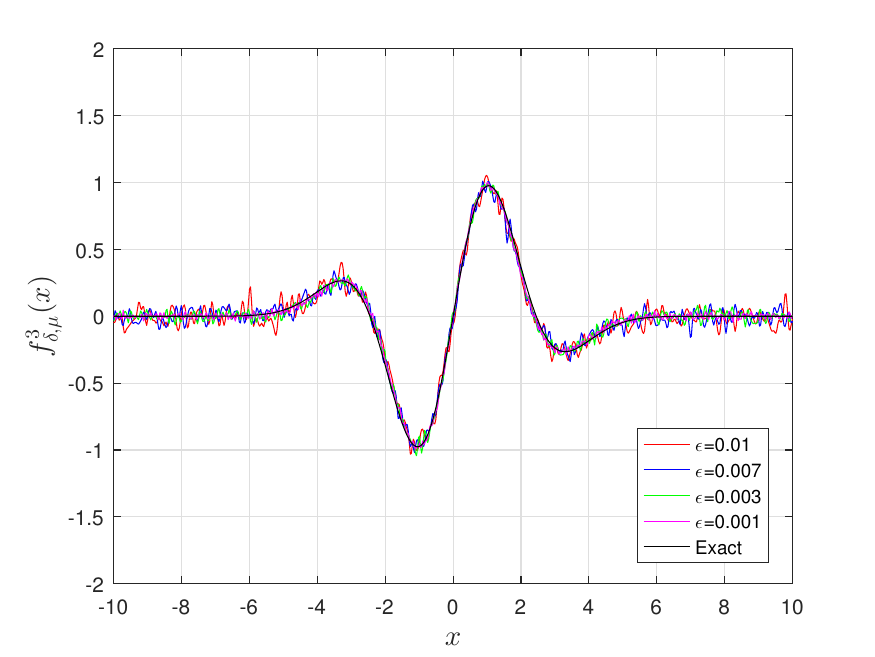}
\vspace{-0.2cm} 
\caption{Ejemplo \ref{example2}: Non-regularized font with $t_0=0.1$ (top-left); Regularized sources with $t_0=0.1$ and $p=2$, using $R^{1}_\mu$ (top-right), $R^{2}_\mu$ (bottom-left), $R^{3}_\mu$ (bottom-right) for different noise levels.}
\vspace{-0.3cm}
\label{CurvaRara}
\end{center}
\end{figure}


\begin{xmpl}
\label{example3}

For this example, the following parameters are considered $\alpha^2=2$; $\bbeta= 0 $; $\nu= 1$; $N=1001$ and $t_0=0.2$.
Furthermore, the perturbation values used are $\epsilon \in \{0.004 ; 0.003; 0.002; $0.0001\}.
Finally, the source to estimate in this case is:

\begin{equation}
\label{fuente_ex3}
f(x)=\begin{cases} 
x+1, \qquad & -1 \leq x <0, \\
-x+1, \qquad & 0 \leq x \leq 1, \\
 0, \qquad & \text{in another case}. 
\end{cases}
\end{equation}
\end{xmpl}

The source \eqref{fuente_ex3} that is recovered in the example \ref{example3} is of interest in various problems with different characteristics due to the particularities
that this function presents. Although it is a continuous function, it has a point where it is not differentiable. Recovery is often difficult at these points. Because of this,
this type of function also turns out to be a common example in source recovery problems, both in signal theory and in heat transfer problems \cite{Yang11,Yang14}.

Table \ref{tableej3} includes the relative estimation errors that again show the benefits of using the methods presented in this article. As noted, for this example, the $R^{2}_{\mu}$ operator performs better.
For example, for the case $\epsilon=10^{-3}$, the relative error of the estimate of the regularized source with $R^{1}_{\mu}$ is $5.35 \%$, with $R^{2}_{\mu}$ is $2.28 \%$ and with $R^{3}_{\mu}$ it is $3.73 \%$.

\begin{table}[H]
\begin{center}
{\begin{tabular}{c}\toprule
Relative errors\\
{\begin{tabular}{lccc} \toprule
\,\, $\epsilon$ &  $ \| f-f^{1}_{\delta,\mu}\|/ \left\|f\right\|$   & $ \| f-f^{2}_{\delta,\mu}\|/ \left\|f\right\|$  & $ \| f-f^{3}_{\delta,\mu}\|/ \left\|f\right\|$   \\ \midrule
$10^{-1}$  & 0.9157  & 0.3027  & 0.6716 \\
$10^{-2}$   & 0.1607  & 0.0353   & 0.1772 \\
$10^{-3}$   & 0.0535 &	0.0228  & 0.0373\\ 
$10^{-4}$   & 0.0478  & 0.0221  &  0.0309  \\
$10^{-5}$   & 0.0239  & 0.0187 & 0.0277 \\\bottomrule
\end{tabular}}
\end{tabular}}
\end{center}
\vspace{-0.30cm}
\caption{Example \ref{example3}: Relative estimation errors for $ t_0 = 1 $ and $ p = 1.$}
\vspace{-0.30cm}
\label{tableej3}
\end{table}

As can be seen in the graphs of figure\ref{CurvaTria}, the regularization operators used, again, smooth the inverse operator making the recovery
stable. In this case the stabilization effect, too, is more noticeable to the naked eye than in the example \ref{example1}. This is because small values of perturbations were considered in order to be able to compare the regularization solutions with the non-regularized ones (which, again, presented many fluctuations). Due to this reason and for comparative purposes, a smaller scale was taken for the sources retrieved with the regularization operators.
Once again it can be seen that the estimation improves for smaller values of disturbance $\epsilon$. Furthermore, no significant differences between the different operators are noticeable to the naked eye, although the operator $R^2_{\mu}$ clearly provides a better estimate.

\vspace{-0.3cm}
\begin{figure}[H]
\begin{center}
\includegraphics[width=0.46\textwidth]{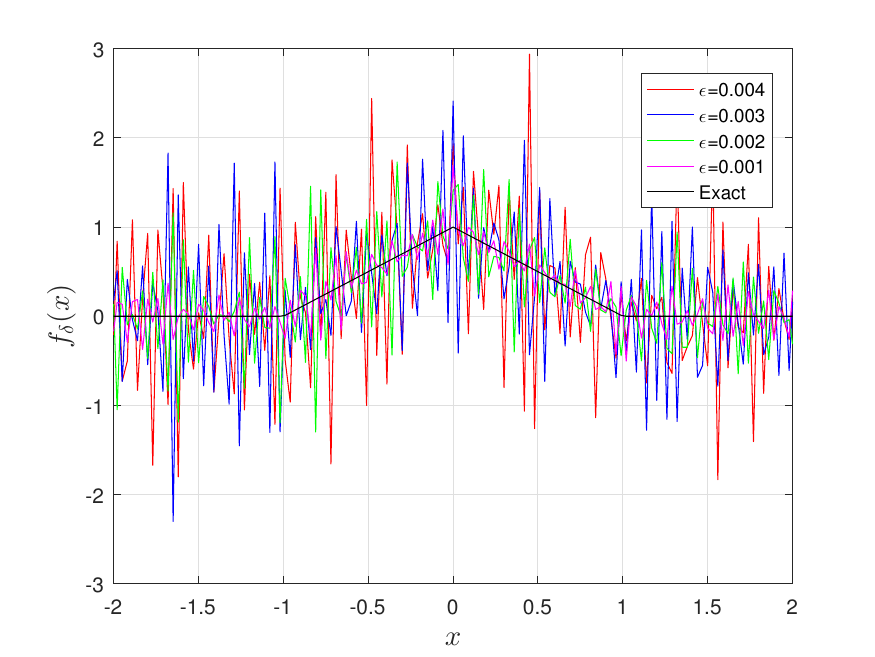}
\includegraphics[width=0.46\textwidth]{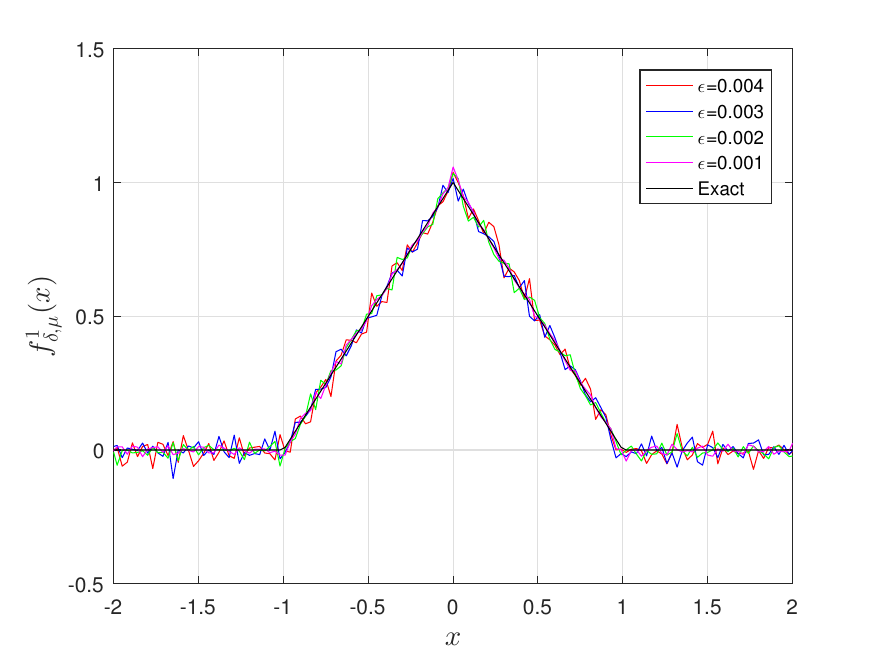}
\includegraphics[width=0.46\textwidth]{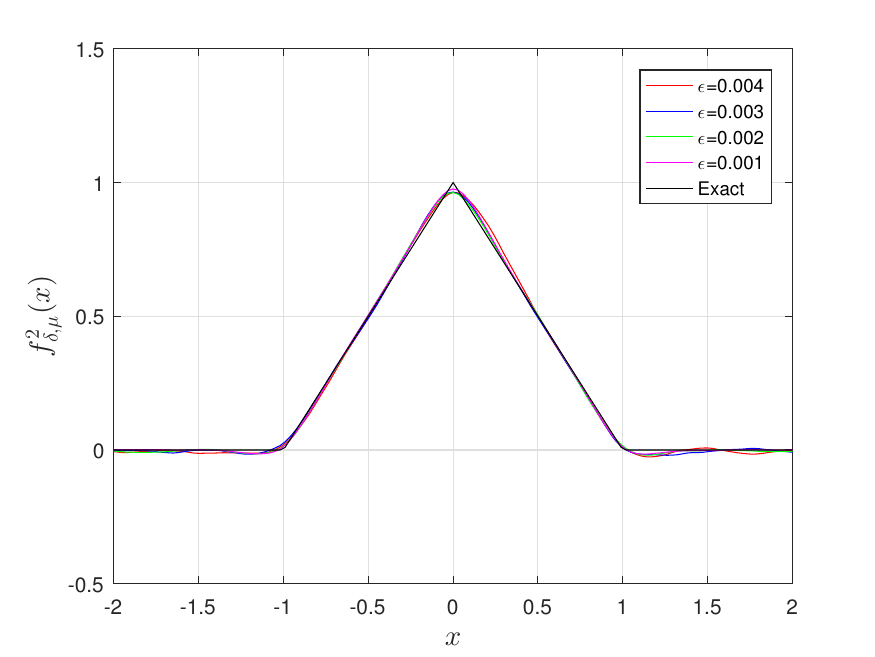}
\includegraphics[width=0.46\textwidth]{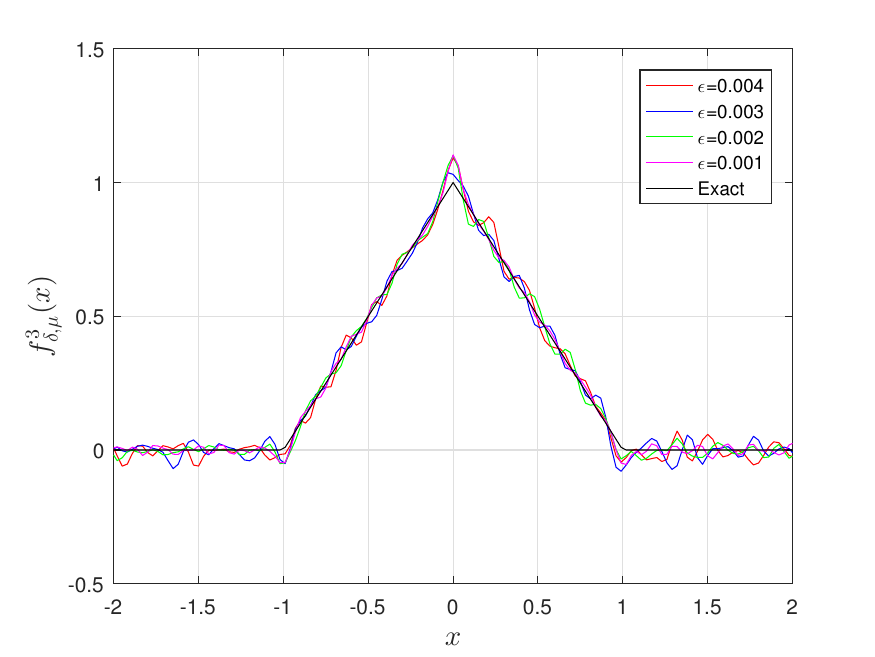}
\vspace{-0.5cm} 
\caption{Ejemplo \ref{example3}: Non-regularized font with $t_0=0.2$ (top-left); Regularized sources with $t_0=0.2$ and $p=4$, using $R^{1}_\mu$ (top-right), $R^{2}_\mu$ (bottom-left), $R^{3}_\mu$ (bottom-right) for different noise levels.}
\vspace{-0.3cm}
\label{CurvaTria}
\end{center}
\end{figure}

\subsubsection{Examples 2D}

Here are two examples of two-dimensional estimation with different characteristics. In order to visualize more clearly the improvements obtained with the regularization operators, not only the recovered sources will be graphed, but also contour lines will be made. These schemes will allow direct comparisons to be made on the sources obtained with the different regularization strategies.

\begin{xmpl}
\label{example4}

For this example, the following parameters are considered $\alpha^2=0.2$; $\bbeta= (0,0) $; $\nu= 0.99$; $N=1001 \times 1001$; $t_0=1$ and $\epsilon=0.025$. 
Finally, the source to estimate in this case is:

\begin{equation}
\label{fuente_ex4}
f(x,y)=\begin{cases} 
\cos\left(\dfrac{x}{20}\right) \, \cos\left(\dfrac{y}{20}\right), \qquad & -40 \leq x,y \leq 40, \\
 0, \qquad & \text{in another case}. 
\end{cases}
\end{equation}
\end{xmpl}

The source \eqref{fuente_ex4} that is recovered in the example \ref{example4} is of interest in various problems with different characteristics due to the particularities that this function presents. Its graph is a surface, smooth, continuous and differentiable.

As can be seen in the graphs of figure \ref{SupSuave}, the regularization operators used, again, smooth the inverse operator making the recovery stable. No significant differences between the different operators are noticeable with the naked eye, although if the level curves are visualized and compared, the operator $R^2_{\mu}$ provides a better estimate.

Table \ref{tableej4} includes the relative estimation errors that once again show the good performance obtained with the methods introduced in this article. As noted, for this example, the operator $R^{2}_{\mu}$ performs better overall.
For example, for the case $\epsilon=10^{-2}$, the relative error of the estimate of the source regularized with $R^{1}_{\mu}$ is $1.18 \%$, with $R^{2}_{\mu}$ is $0.67 \%$ and with $R^{3}_{\mu}$ it is $1.41 \%$.

\begin{table}[H]
\begin{center}
{\begin{tabular}{c}\toprule
Relative errors\\
{\begin{tabular}{lccc} \toprule
\,\, $\epsilon$ &  $ \| f-f^{1}_{\delta,\mu}\|/ \left\|f\right\|$   & $ \| f-f^{2}_{\delta,\mu}\|/ \left\|f\right\|$  & $ \| f-f^{3}_{\delta,\mu}\|/ \left\|f\right\|$   \\ \midrule
$10^{-1}$  & 0.0846  & 0.0583  & 0.0989 \\
$10^{-2}$   & 0.0118  & 0.0067   & 0.0141 \\
$10^{-3}$   & 0.0029 &	0.0011  & 0.0045\\ 
$10^{-4}$   & 0.0008  & 0.0004  &  0.0013  \\
$10^{-5}$   & 0.0004  & 0.0003 & 0.0004 \\\bottomrule
\end{tabular}}
\end{tabular}}
\end{center}
\vspace{-0.30cm}
\caption{Example \ref{example4}: Relative estimation errors for $ t_0 = 1 $ and $ p = 1.$}
\vspace{-0.30cm}
\label{tableej4}
\end{table}


\begin{xmpl}
\label{example5}

For this example, the following parameters are considered $\alpha^2=1$; $\bbeta= (0,0) $; $\nu= 1$; $N=1001 \times 1001$; $t_0=0.4$ and $\epsilon=0.05$. 
Finally, the source to estimate in this case is:

\begin{equation}
\label{fuente_ex5}
f(x,y)=\begin{cases} 
10+x-y, \qquad & -10 \leq x \leq 0, \qquad  0 \leq y \leq 10+x, \\
10+x+y, \qquad & -10 \leq x \leq 0, \qquad  -10-x \leq y \leq 0, \\
10-x-y, \qquad & 0 \leq x \leq 10, \qquad  0 \leq y \leq 10-x, \\
10-x+y, \qquad & 0 \leq x \leq 10, \qquad  -10+x \leq y \leq 0, \\
 0, \qquad & \text{in another case}. 
\end{cases}
\end{equation}
\end{xmpl}

\begin{figure}[H]
\begin{center}
\includegraphics[width=0.37\textwidth]{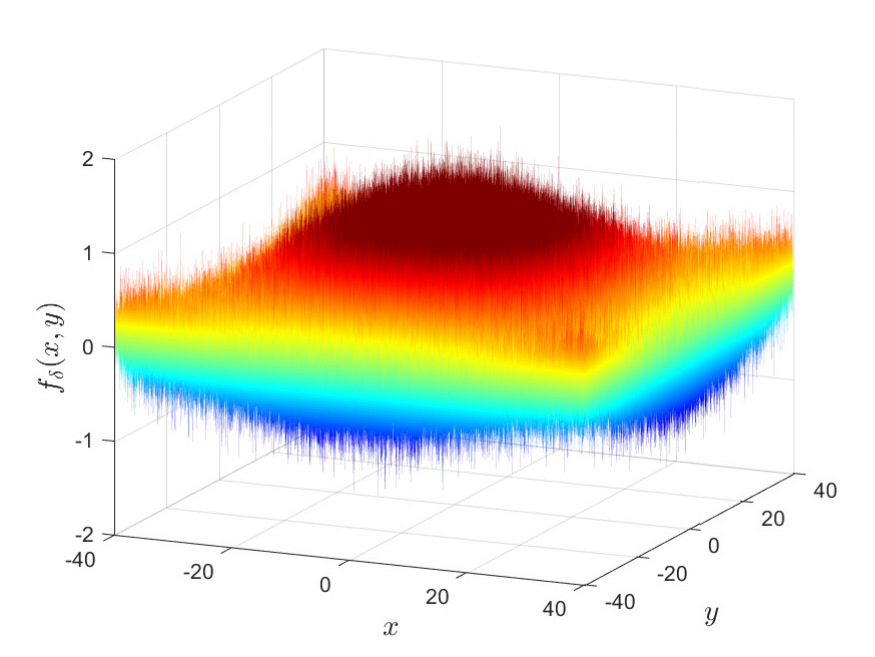}
\includegraphics[width=0.37\textwidth]{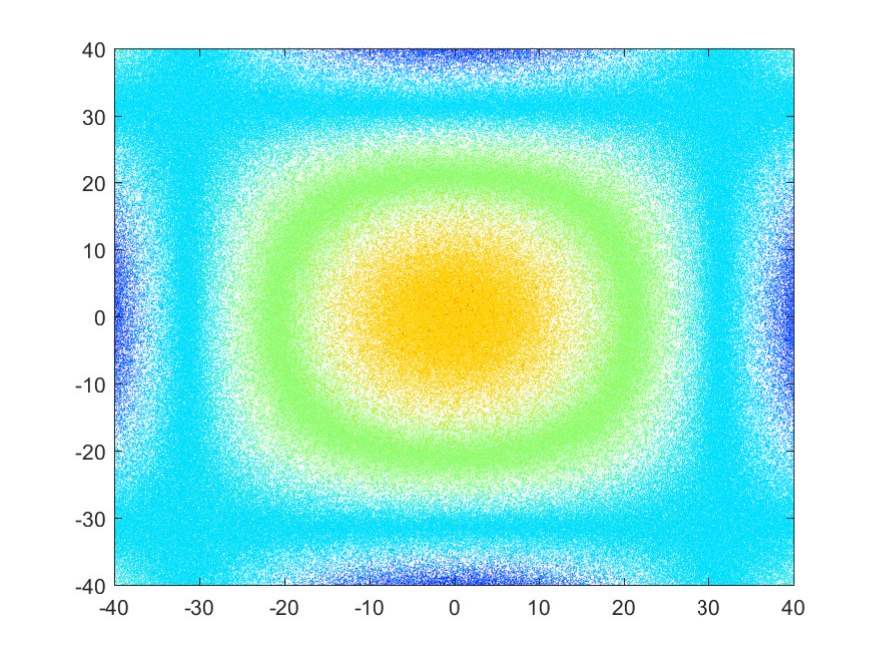}
\includegraphics[width=0.37\textwidth]{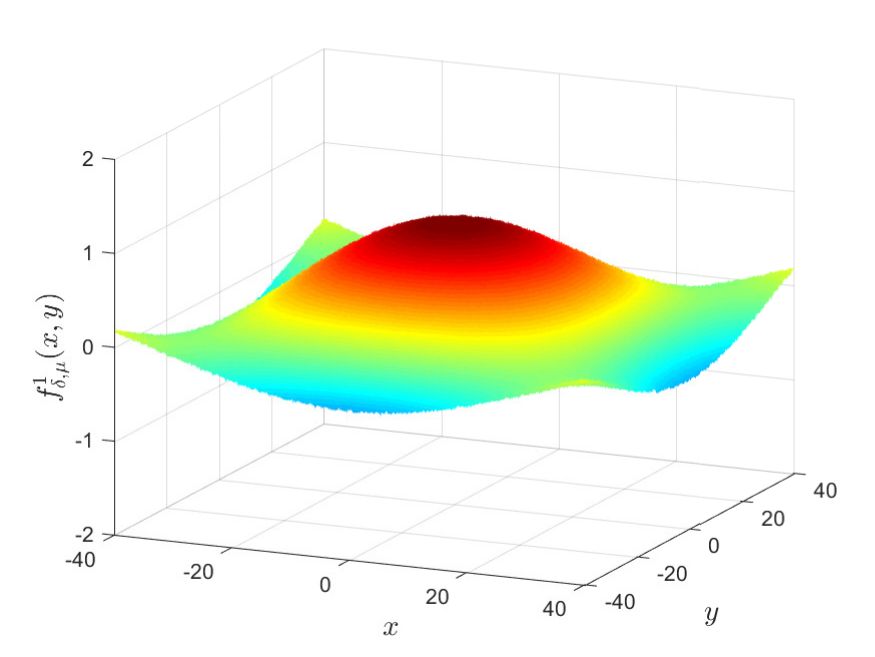}
\includegraphics[width=0.37\textwidth]{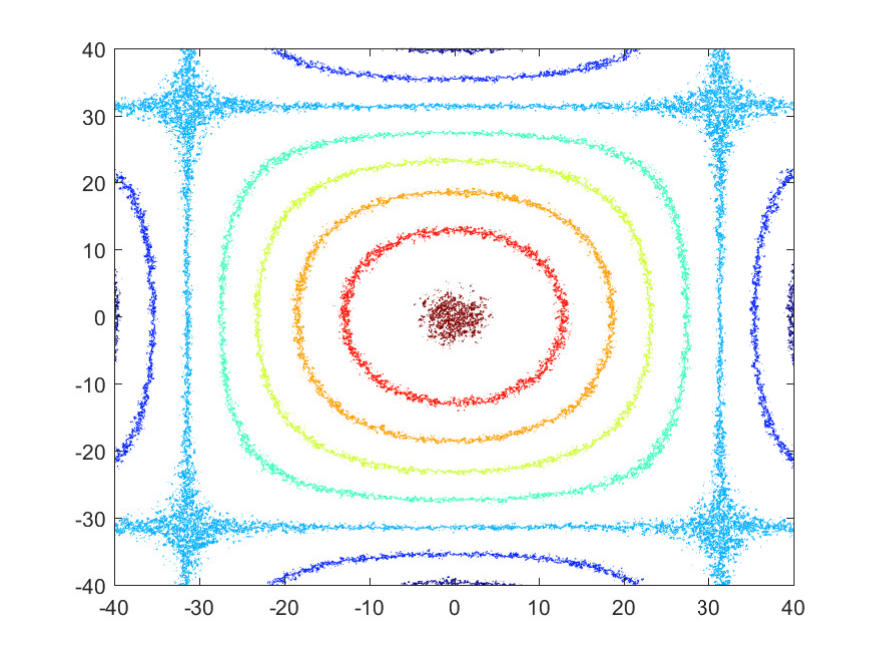}
\includegraphics[width=0.37\textwidth]{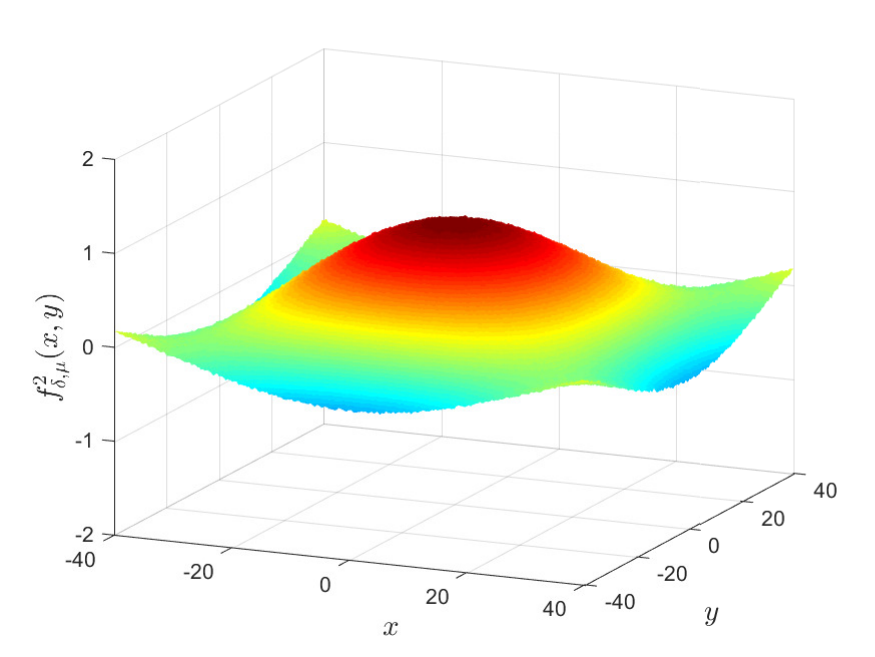}
\includegraphics[width=0.37\textwidth]{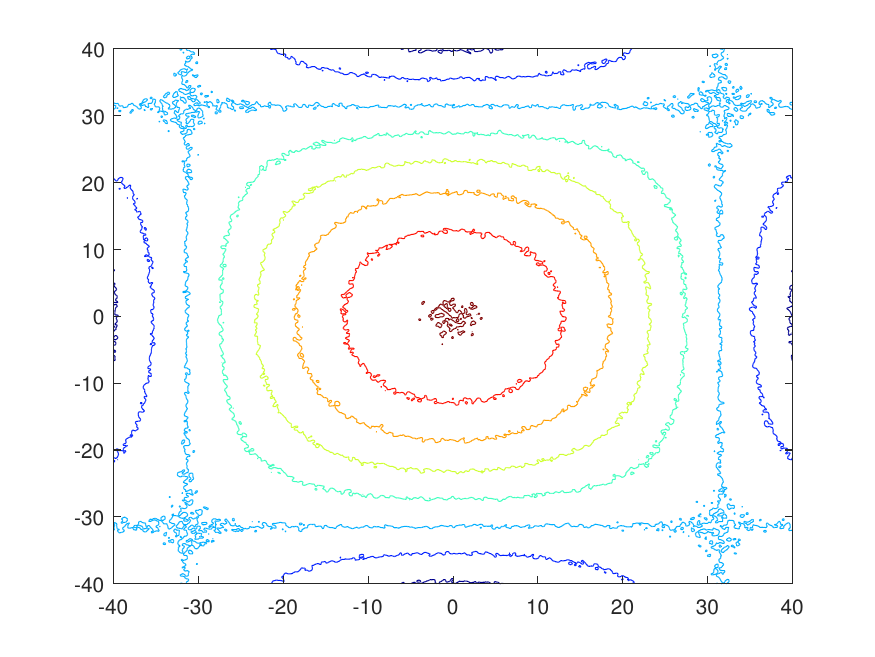}
\includegraphics[width=0.37\textwidth]{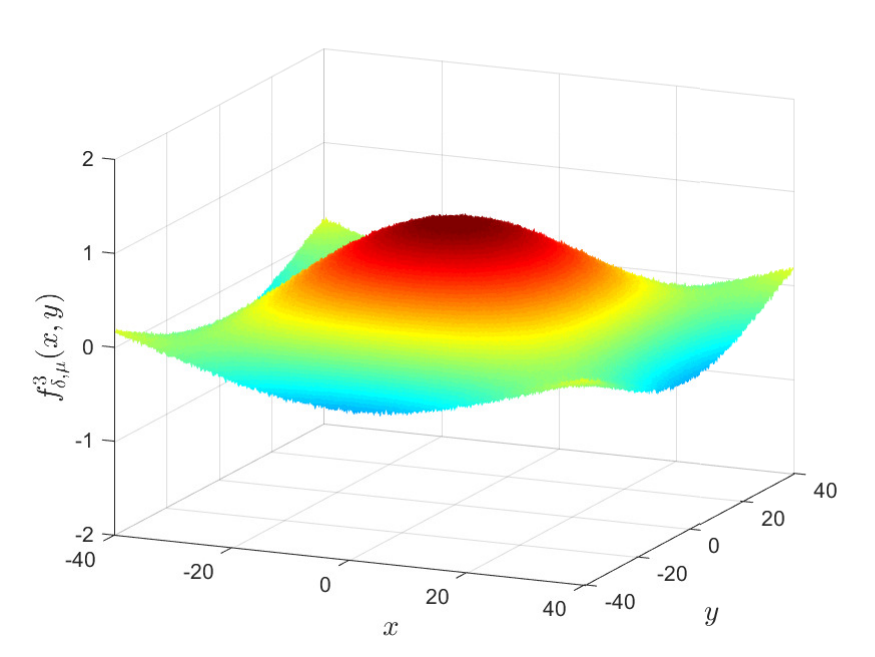}
\includegraphics[width=0.37\textwidth]{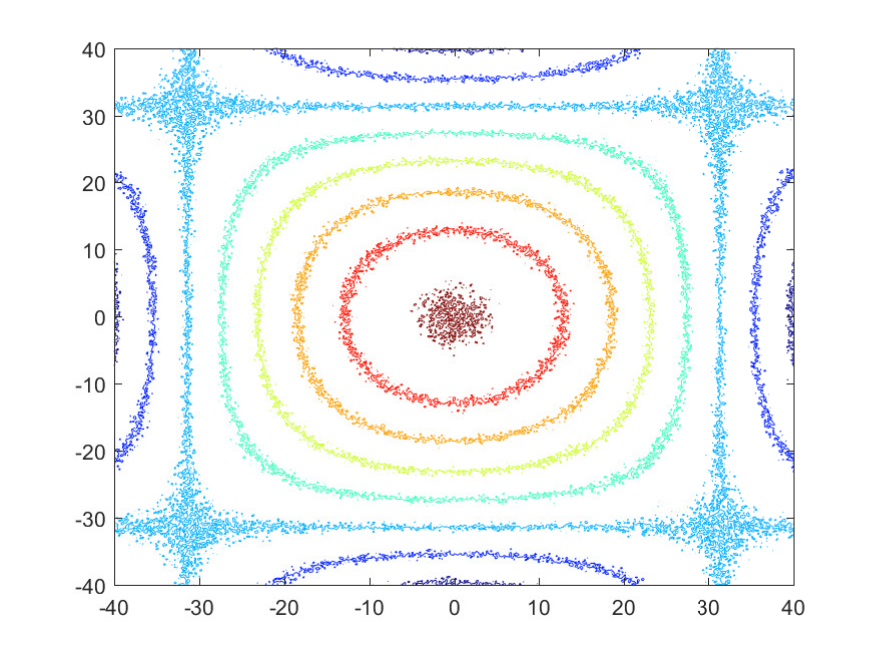}
\vspace{-0.8cm} 
\caption{Example \ref{example4}: Fonts and contour lines. Not regularized with $t_0=1$ (First row); regularized with $t_0=1$ and $p=1$, using $R^{1}_\mu$ (second row), $R^{2}_\mu$ (third row), $R^{3 }_\mu$ (fourth row) for $\epsilon=0.025$.}
\vspace{-0.8cm}
\label{SupSuave}
\end{center}
\end{figure}

\vspace{-0.5cm}
\begin{figure}[H]
\begin{center}
\includegraphics[width=0.37\textwidth]{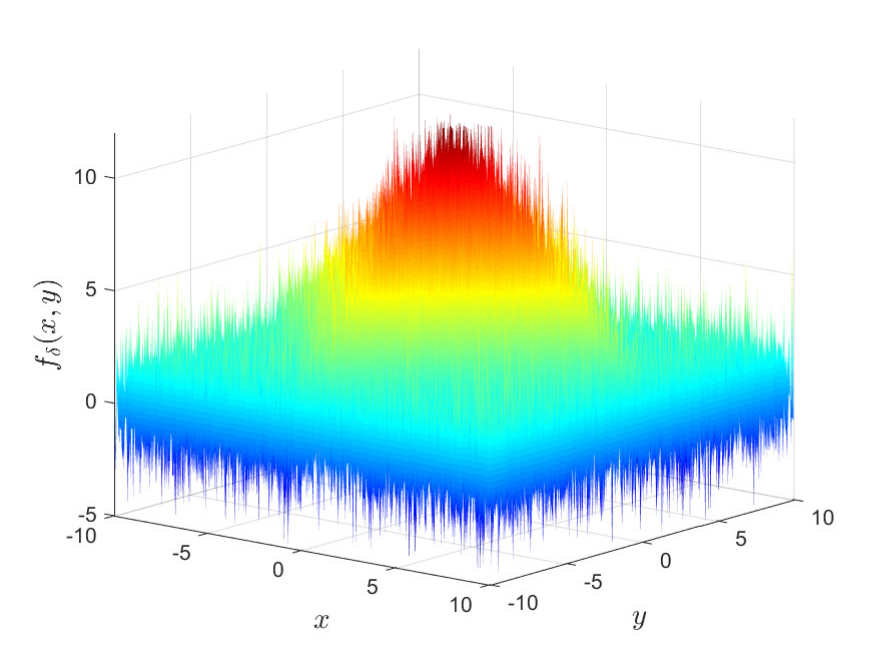}
\includegraphics[width=0.37\textwidth]{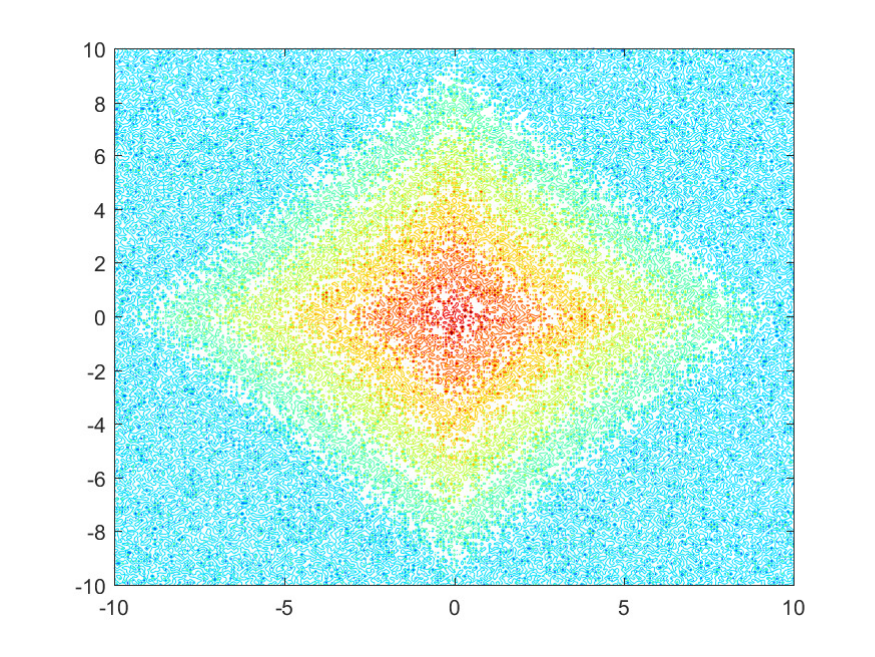}
\includegraphics[width=0.37\textwidth]{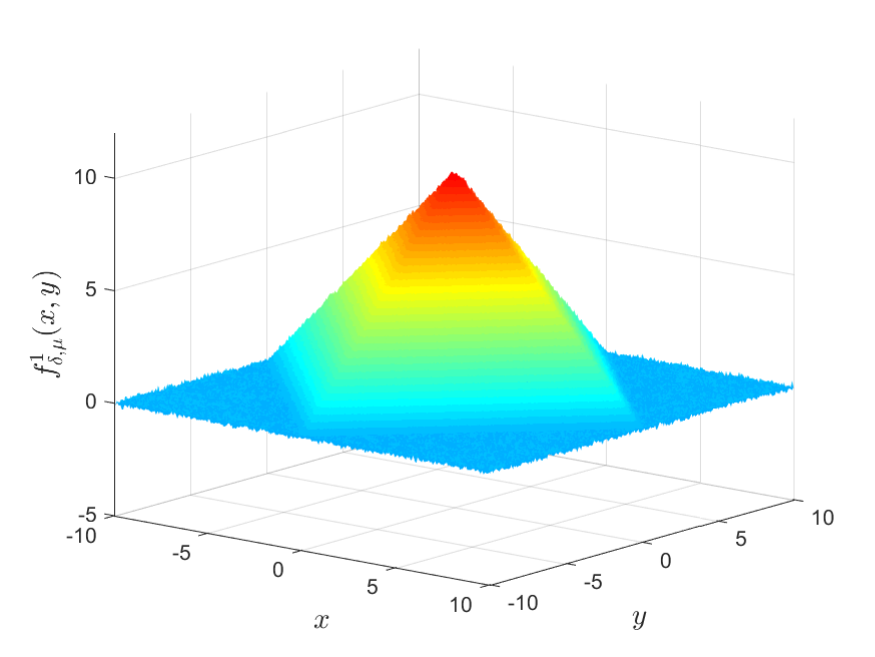}
\includegraphics[width=0.37\textwidth]{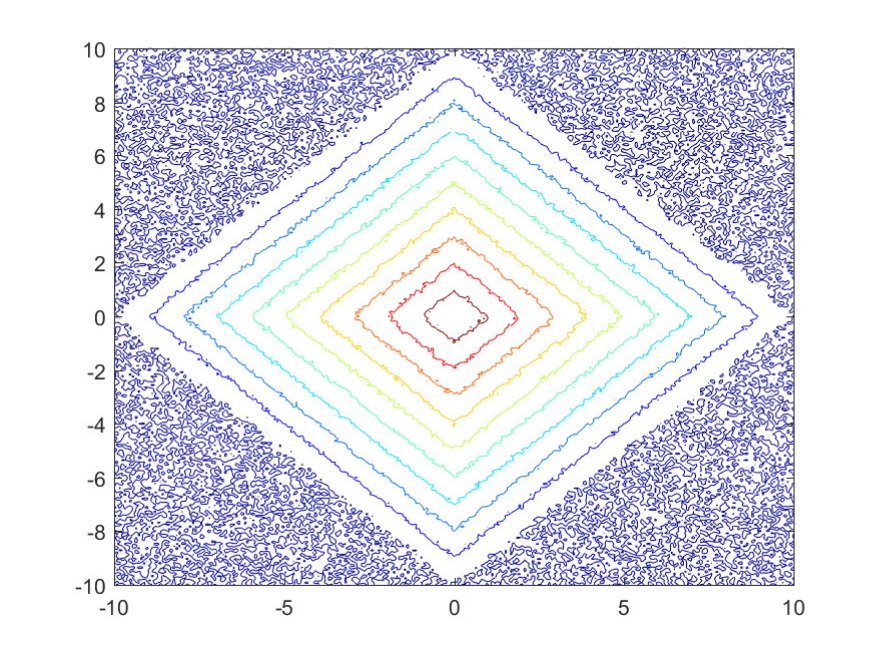}
\includegraphics[width=0.37\textwidth]{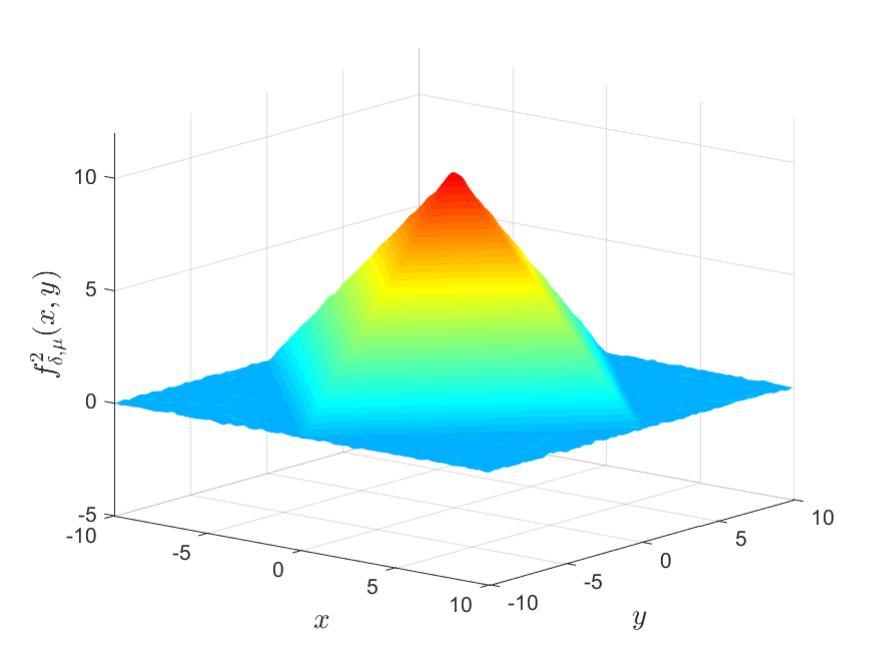}
\includegraphics[width=0.37\textwidth]{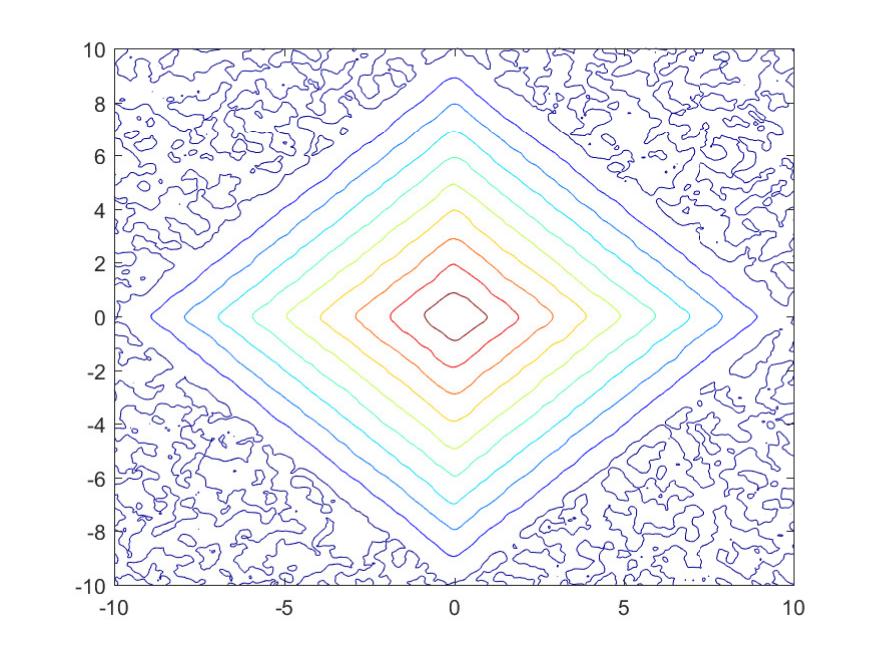}
\includegraphics[width=0.37\textwidth]{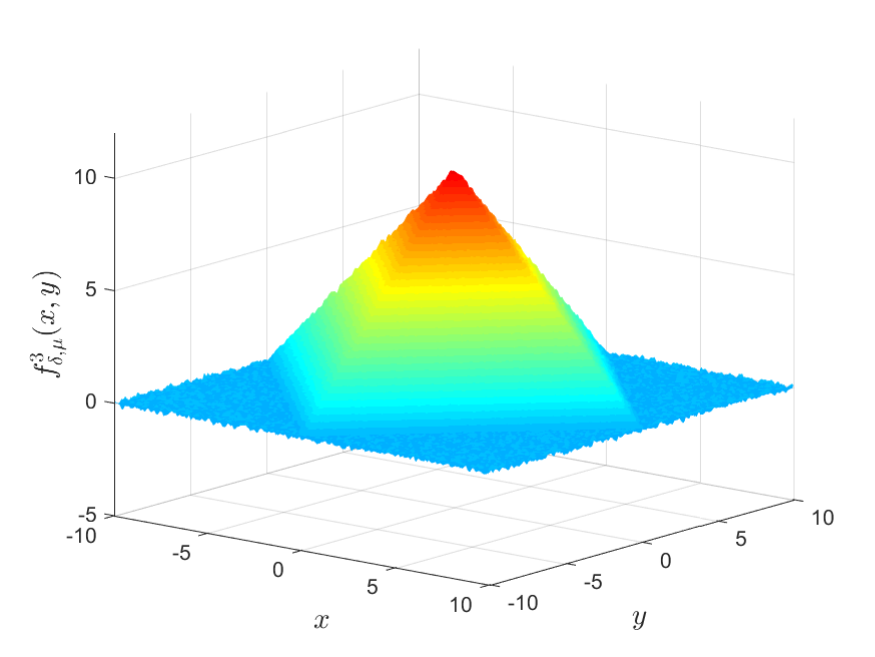}
\includegraphics[width=0.37\textwidth]{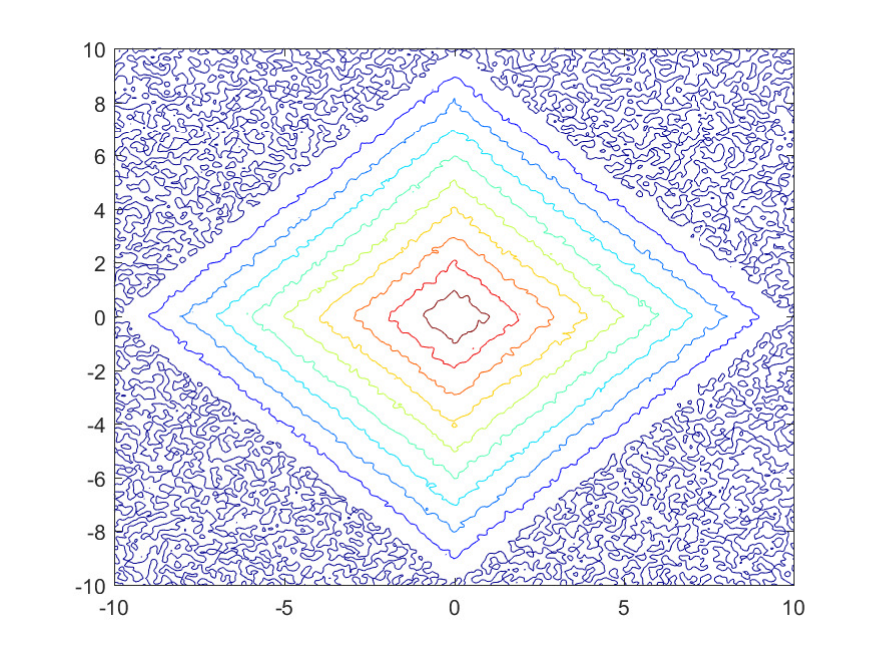}
\vspace{-0.8cm} 
\caption{Example \ref{example5}: Fonts and contour lines. Not regularized with $t_0=0.4$ (First row); regularized with $t_0=0.4$ and $p=0.6$, using $R^{1}_\mu$ (second row), $R^{2}_\mu$ (third row), $R^{3 }_\mu$ (fourth row) for $\epsilon=0.05$.}
\vspace{-0.8cm}
\label{SupPira}
\end{center}
\end{figure}

The source \eqref{fuente_ex5} retrieved in example \ref{example5} is of interest because it is a non-differentiable surface. This type of surfaces can be approximated badly in environments close to the points of non-differentiability.

As can be seen in the graphs of figure
\ref{SupPira}, once again, the regularization operators used, smooth the inverse operator making the recovery stable. No significant differences between the different operators are noticeable at first glance, although if the level curves are visualized and compared, once again, the operator $R^2_{\mu}$ provides a better estimate, more effectively stabilizing the solution of the ill-posed problem.

Table \ref{tableej5} includes the relative estimation errors that once again show the good performance obtained with the methods introduced in this paper. As observed, for this example, the operator $R^{2}_{\mu}$ presents a better general performance, since it achieves a better stabilization of the solution.
For example, for the case $\epsilon=10^{-2}$, the relative error of the estimate of the regularized source with $R^{1}_{\mu}$ is $1.21 \%$, with $R^{2}_{\mu}$ is $0.38 \%$ and with $R^{3}_{\mu}$ it is $1.65 \%$.

\begin{table}[H]
\begin{center}
{\begin{tabular}{c}\toprule
Relative errors\\
{\begin{tabular}{lccc} \toprule
\,\, $\epsilon$ &  $ \| f-f^{1}_{\delta,\mu}\|/ \left\|f\right\|$   & $ \| f-f^{2}_{\delta,\mu}\|/ \left\|f\right\|$  & $ \| f-f^{3}_{\delta,\mu}\|/ \left\|f\right\|$   \\ \midrule
$10^{-1}$  & 0.0442  & 0.0193  & 0.0491 \\
$10^{-2}$   & 0.0121  & 0.0038   & 0.0165 \\
$10^{-3}$   & 0.0056 &	0.0025  & 0.0096\\ 
$10^{-4}$   & 0.0043  & 0.0013  &  0.0079  \\
$10^{-5}$   & 0.0027  & 0.0009 & 0.0054 \\\bottomrule
\end{tabular}}
\end{tabular}}
\end{center}
\vspace{-0.30cm}
\caption{Example \ref{example5}: Relative estimation errors for $ t_0 = 1 $ and $ p = 1.$}
\vspace{-0.30cm}
\label{tableej5}
\end{table}

\subsubsection{Examples 3D}

An example of three-dimensional estimation is now addressed. Since in this case the source is a scalar function with domain in $R^3$, 4 dimensions are required to be able to plot it, because of this, only the cuts with the coordinate axes will be plotted. That is, the graphs of the retrieved sources will be analyzed when one of the variables is null.
In addition, to visualize more clearly the improvements obtained with the different regularization operators introduced and studied in this article, contour diagrams will be made for each of the cuts considered and for each regularization.

\begin{xmpl}
\label{example6}

For this example, the following parameters are considered $\alpha^2=0.4$; $\bbeta= (1,-0.5,-0.5) $; $\nu= 0.997$; $N=129 \times 129 \times 129$; $t_0=3$ and $\epsilon=0.035$. 
Finally, the source to estimate in this case is:

\begin{equation}
\label{fuente_ex6}
f(x,y,z)=\begin{cases} 
\sin\left(\dfrac{x+y+z}{20}\right), \qquad & -2\pi \leq x,y,z \leq 2\pi, \\
 0, \qquad & \text{in another case}. 
\end{cases}
\end{equation}
\end{xmpl}

The source \eqref{fuente_ex6} that is recovered in the example \ref{example6} is of interest in various problems with different characteristics due to the particularities that this function presents. It is a function, smooth, continuous and differentiable; moreover, it is symmetric with respect to the three coordinates. 

\begin{table}[h!]
\begin{center}
{\begin{tabular}{c}\toprule
Relative errors\\
{\begin{tabular}{lccc} \toprule
\,\, $\epsilon$ &  $ \| f-f^{1}_{\delta,\mu}\|/ \left\|f\right\|$   & $ \| f-f^{2}_{\delta,\mu}\|/ \left\|f\right\|$  & $ \| f-f^{3}_{\delta,\mu}\|/ \left\|f\right\|$   \\ \midrule
$10^{-1}$  & 0.0740  & 0.0365  & 0.0904 \\
$10^{-2}$   & 0.0115  & 0.0037   & 0.0127 \\
$10^{-3}$   & 0.0089 &	0.0007  & 0.0093\\ 
$10^{-4}$   & 0.0017  & 0.0006  &  0.0089  \\
$10^{-5}$   & 0.0014  & 0.0005 & 0.0053 \\\bottomrule
\end{tabular}}
\end{tabular}}
\end{center}
\vspace{-0.30cm}
\caption{Example \ref{example6}: Relative estimation errors for $ t_0 = 1 $ and $ p = 1.$}
\vspace{-0.30cm}
\label{tableej6}
\end{table}

\begin{figure}[H]
\begin{center}
\includegraphics[width=0.37\textwidth]{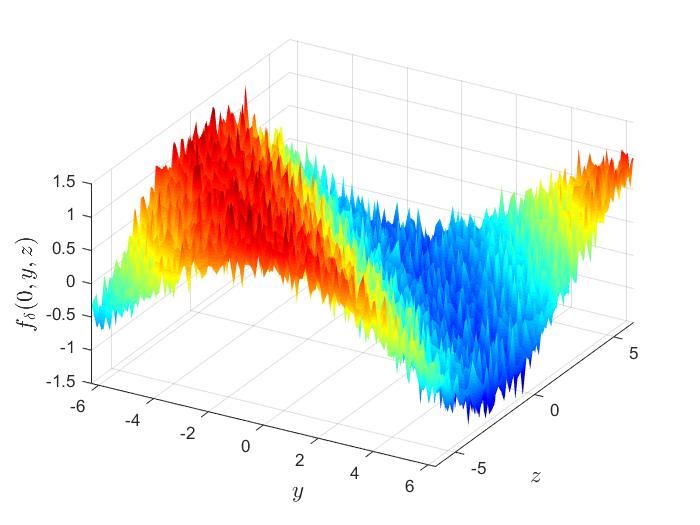}
\includegraphics[width=0.37\textwidth]{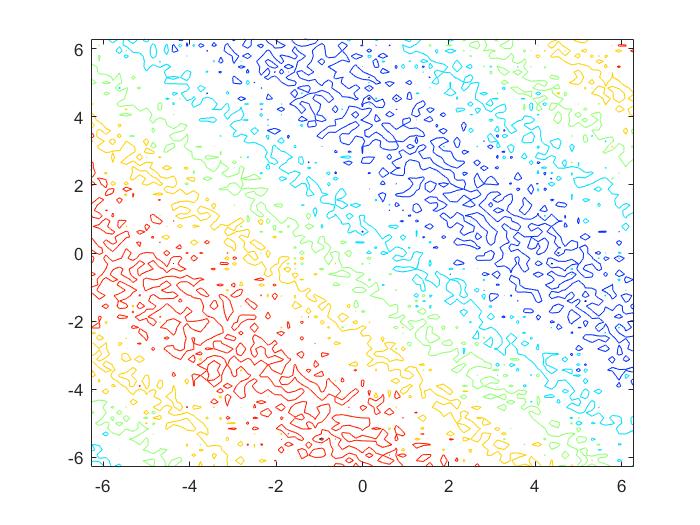}
\includegraphics[width=0.37\textwidth]{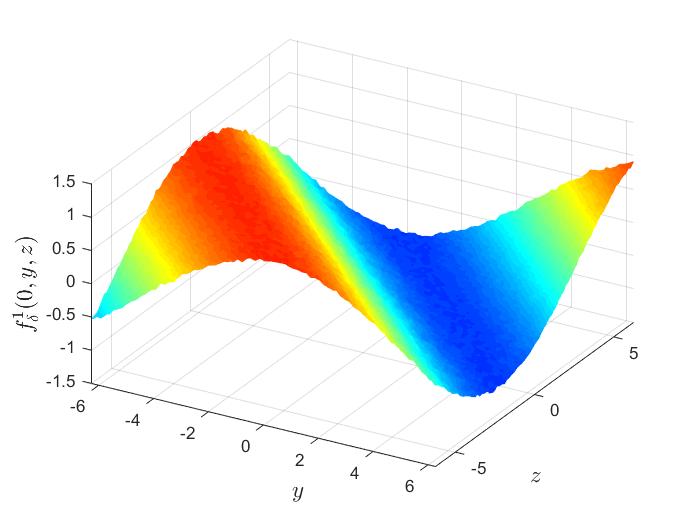}
\includegraphics[width=0.37\textwidth]{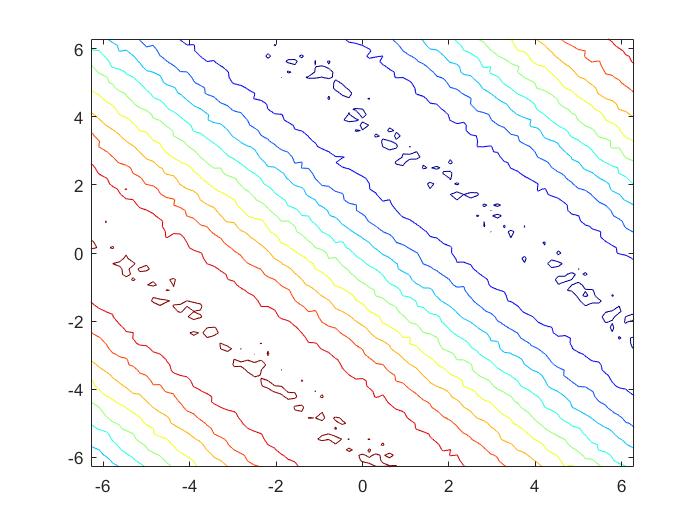}
\includegraphics[width=0.37\textwidth]{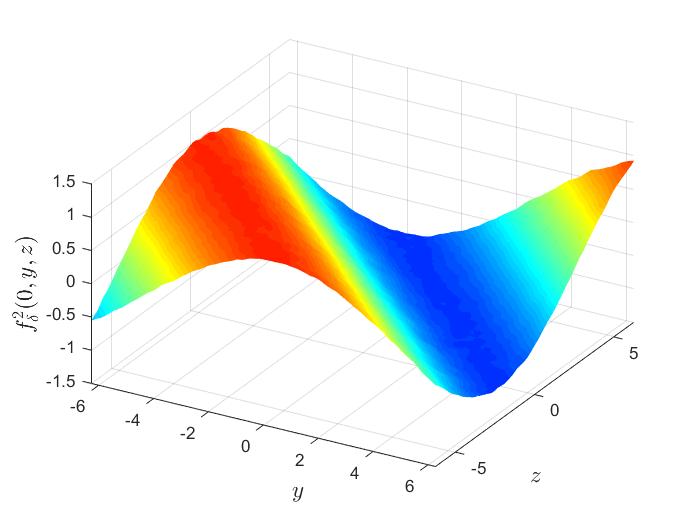}
\includegraphics[width=0.37\textwidth]{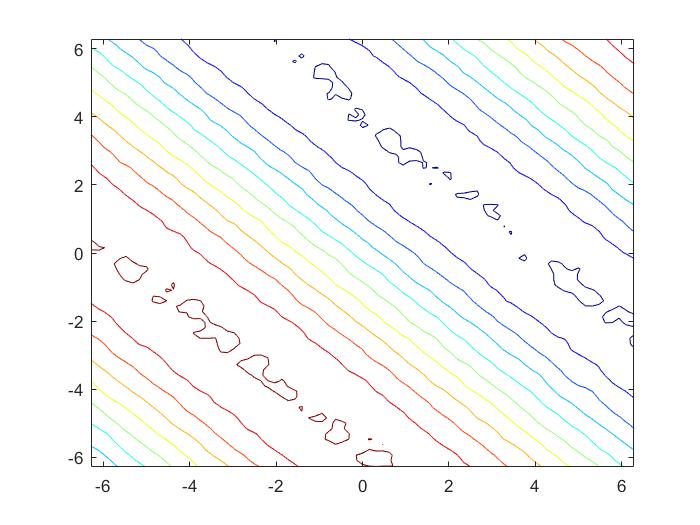}
\includegraphics[width=0.37\textwidth]{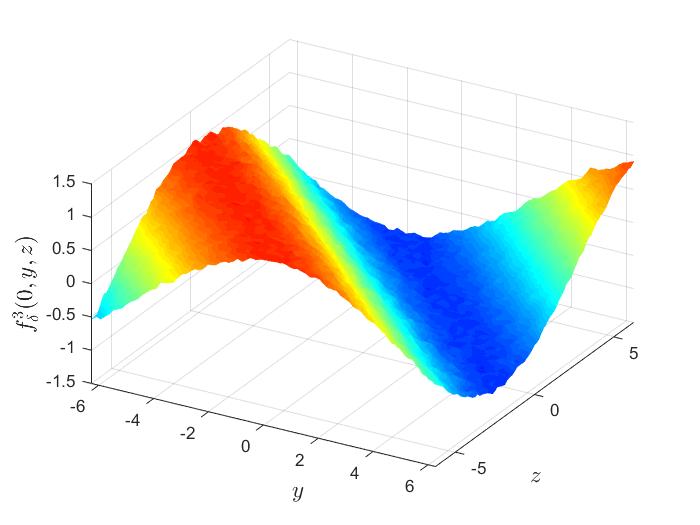}
\includegraphics[width=0.37\textwidth]{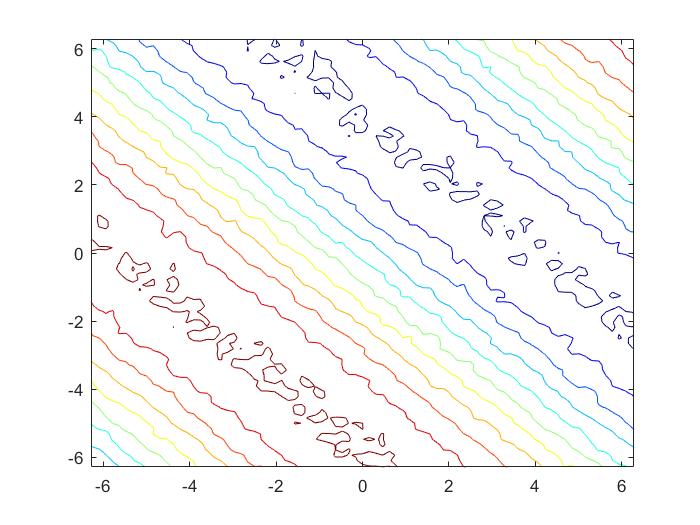}
\vspace{-0.8cm} 
\caption{Example \ref{example6}: Sources and contour lines with $(x=0)$. Not regularized with $t_0=3$ (First row); regularized with $t_0=3$ and $p=3$, using $R^{1}_\mu$ (second row), $R^{2}_\mu$ (third row), $R^{3 }_\mu$ (fourth row) for $\epsilon=0.035$.}
\vspace{0.5cm}
\label{CupSuave1}
\end{center}
\end{figure}

\vspace{-0.5cm}
\begin{figure}[H]
\begin{center}
\includegraphics[width=0.37\textwidth]{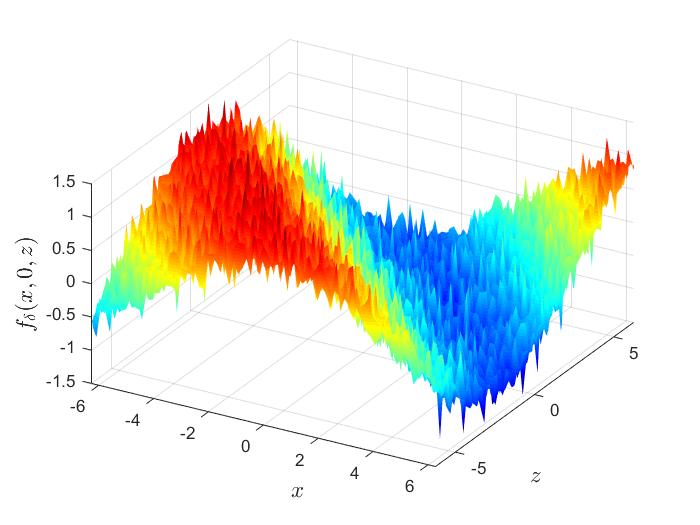}
\includegraphics[width=0.37\textwidth]{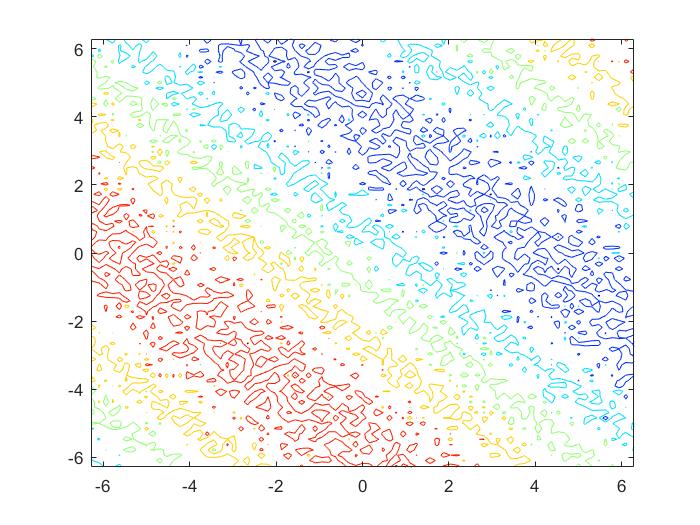}
\includegraphics[width=0.37\textwidth]{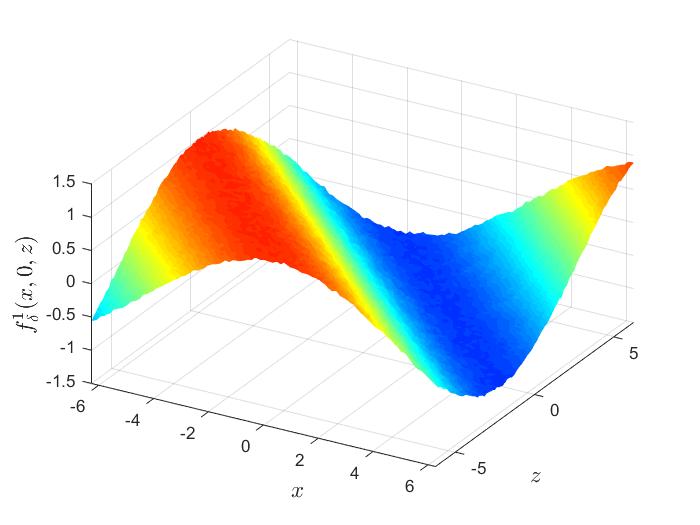}
\includegraphics[width=0.37\textwidth]{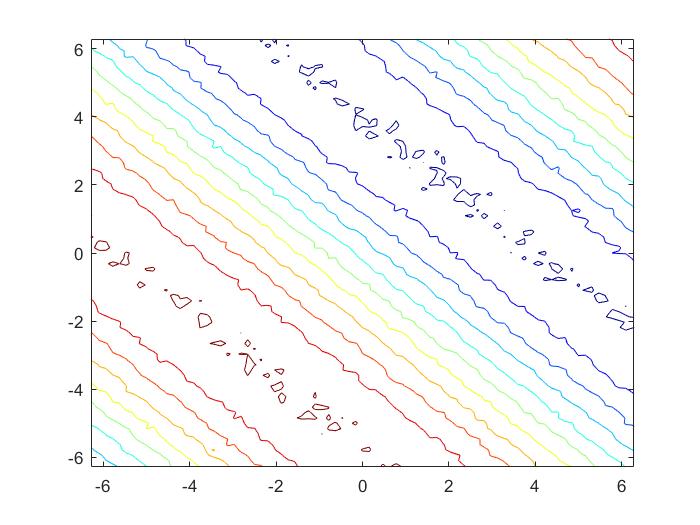}
\includegraphics[width=0.37\textwidth]{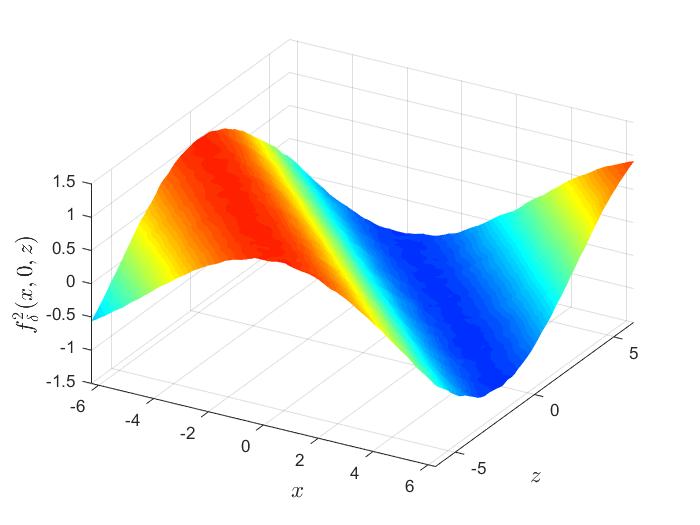}
\includegraphics[width=0.37\textwidth]{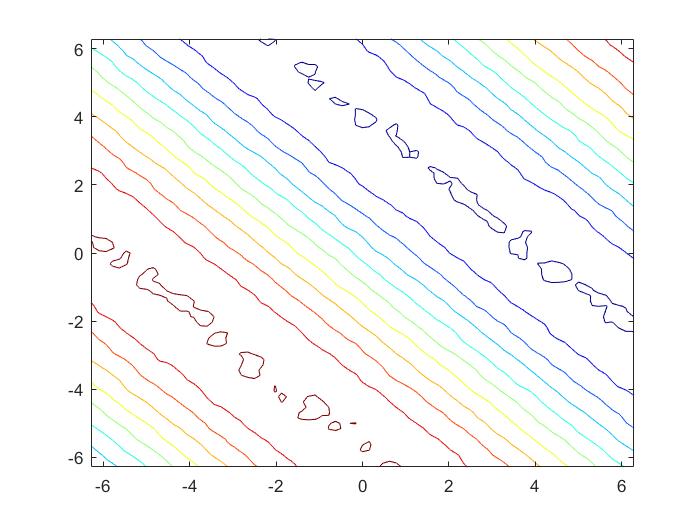}
\includegraphics[width=0.37\textwidth]{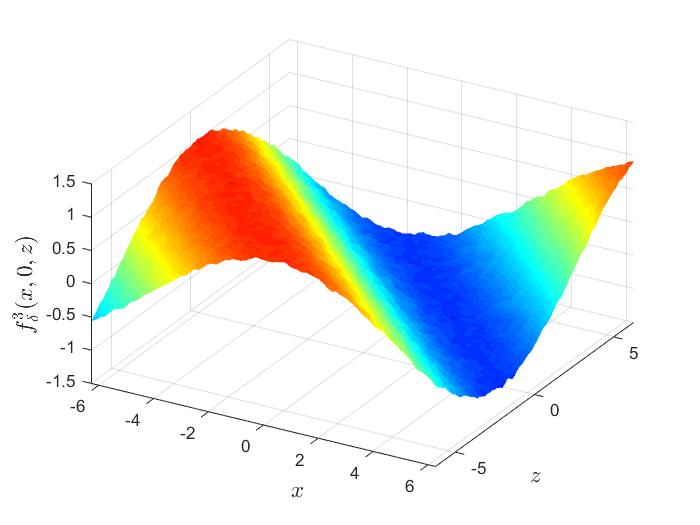}
\includegraphics[width=0.37\textwidth]{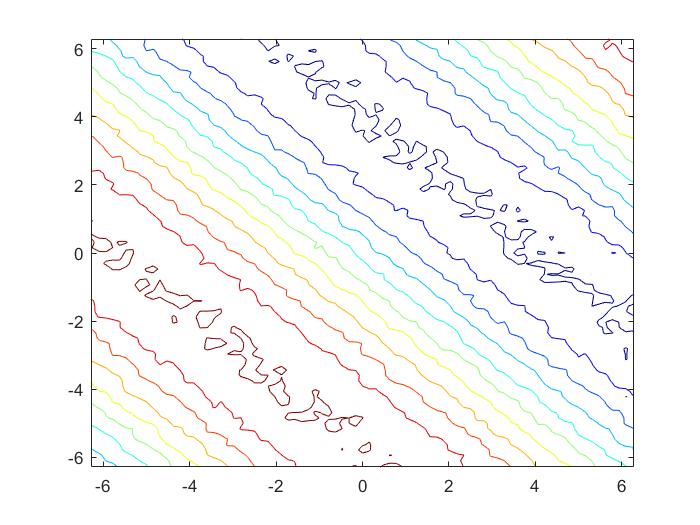}
\vspace{-0.8cm} 
\caption{Example \ref{example6}: Sources and contour lines with $(y=0)$. Not regularized with $t_0=3$ (First row); regularized with $t_0=3$ and $p=3$, using $R^{1}_\mu$ (second row), $R^{2}_\mu$ (third row), $R^{3 }_\mu$ (fourth row) for $\epsilon=0.035$.}
\vspace{1.0cm}
\label{CupSuave2}
\end{center}
\end{figure}

\begin{figure}[H]
\begin{center}
\includegraphics[width=0.36\textwidth]{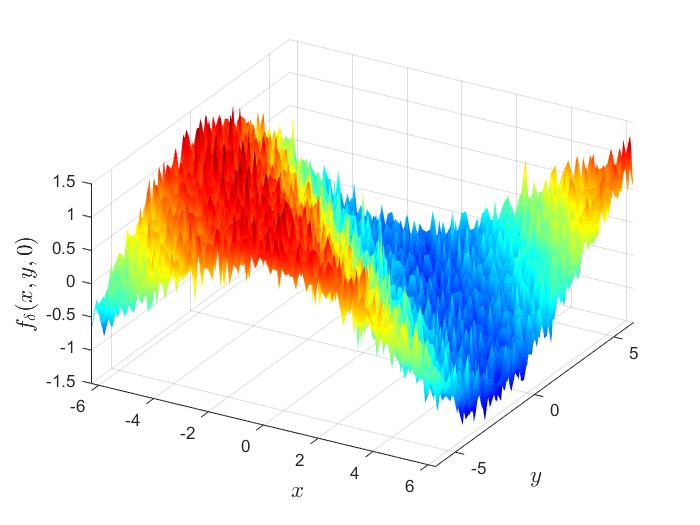}
\includegraphics[width=0.36\textwidth]{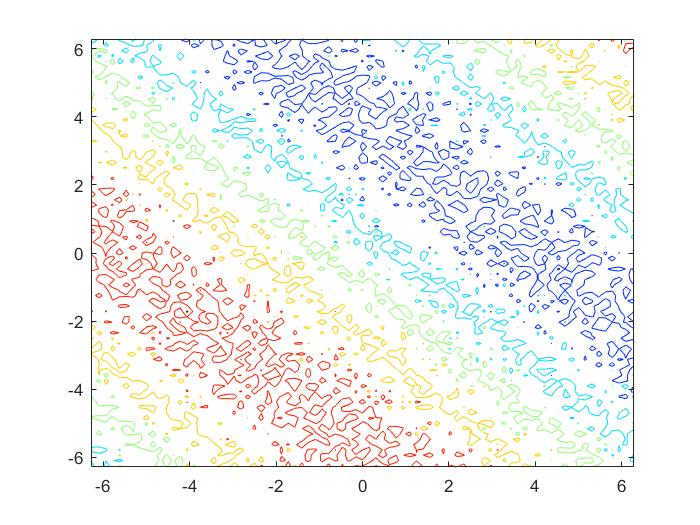}
\includegraphics[width=0.36\textwidth]{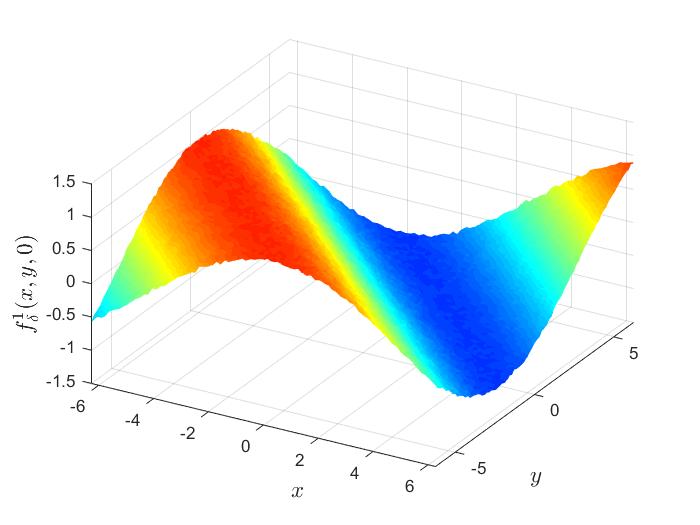}
\includegraphics[width=0.36\textwidth]{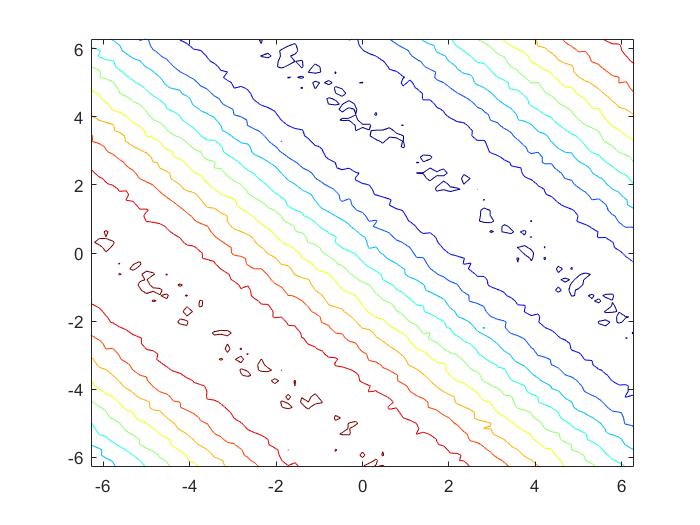}
\includegraphics[width=0.36\textwidth]{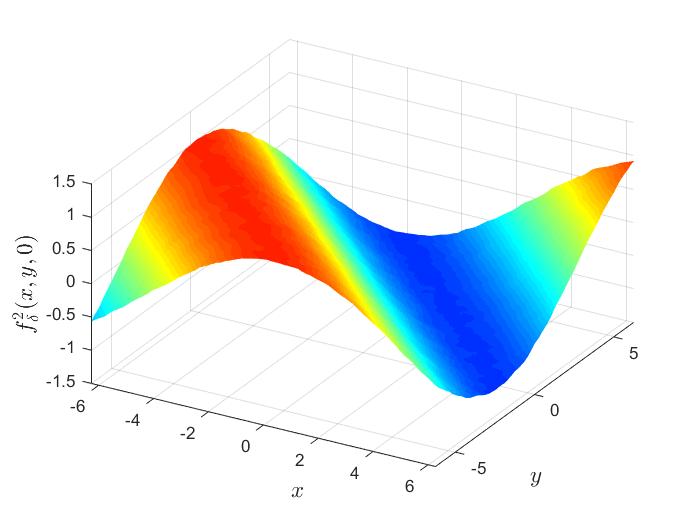}
\includegraphics[width=0.36\textwidth]{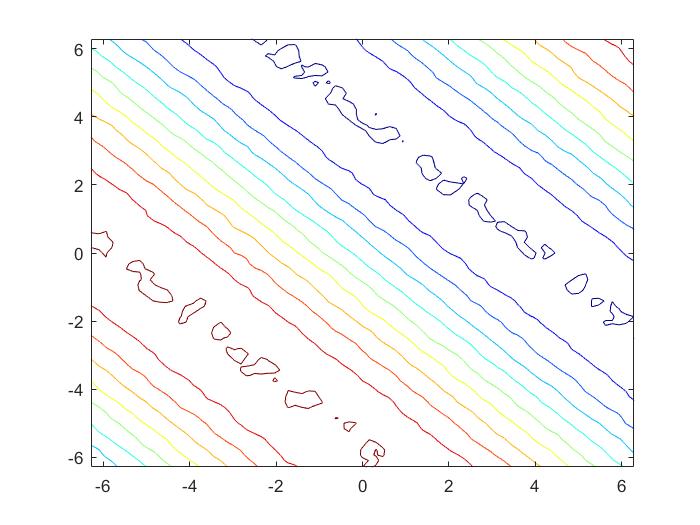}
\includegraphics[width=0.36\textwidth]{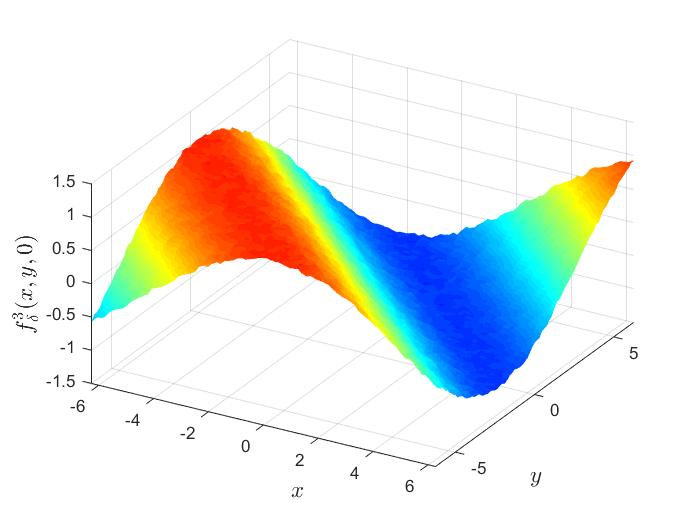}
\includegraphics[width=0.36\textwidth]{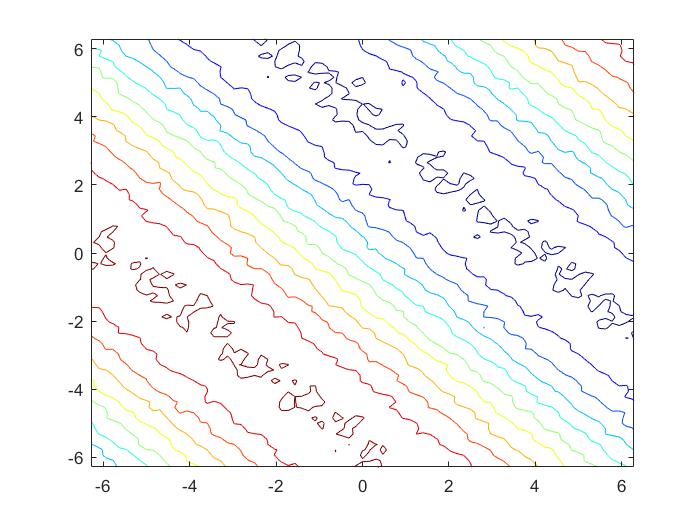}
\vspace{-0.8cm} 
\caption{Example \ref{example6}: Sources and contour lines with $(z=0)$. Not regularized with $t_0=3$ (First row); regularized with $t_0=3$ and $p=3$, using $R^{1}_\mu$ (second row), $R^{2}_\mu$ (third row), $R^{3 }_\mu$ (fourth row) for $\epsilon=0.035$.}
\vspace{1.0cm}
\label{CupSuave3}
\end{center}
\end{figure}

\vspace{1.0cm}

Table \ref{tableej6} includes the relative estimation errors that once again show the good performance obtained with the methods introduced in this article. As noted, in this example, the $R^{2}_{\mu}$ operator performs better overall.
For example, for the case $\epsilon=10^{-2}$, the relative error of the estimate of the regularized source with $R^{1}_{\mu}$ is $1.15 \%$, with $R^{2}_{\mu}$ is $0.37 \%$ and with $R^{3}_{\mu}$ it is $9.04 \%$.

As can be seen in the graphs of the figures
\ref{CupSuave1}, \ref{CupSuave2}, \ref{CupSuave3} the regularization operators used, again, smooth the inverse operator making the recovery in each cut with the coordinate axes stable.
In principle, notable differences between the recovery of each cut are not appreciated; since they are all estimated with the same order of fluctuation.
On the other hand, no significant differences between the approximations with the different regularization operators are noticeable at first glance. If the level curves are carefully observed, for each cut with the coordinate axes, it is seen that, once again, the operator $R^2_{\mu}$ yields better results.

\subsection{Discussion of results}

The numerical examples carried out show that the three regularization operators, introduced in this article, are useful to solve the problem of lack of stability in the source estimation solution. Any of the three operators studied smooth out the inverse operator making the recovery stable. This fact is independent of the general characteristics of the function to be determined, since the results are good, even for discontinuous and non-differentiable sources.

However, in general terms the operator $R^{2}_{\mu}$ yielded better results and allowed estimating the source with the lowest relative error, for the different disturbances considered. Particularly the operator $R^{3}_{\mu}$ offered a better performance when the source to be estimated is a discontinuous function.

\section{Conclusions}

This paper deals with the inverse problem of identifying the source in a complete parabolic equation
from noisy measurements. An analytical solution to the estimation problem is given and shown that the problem is ill-posed since said solution is not stable.
In order to address this drawback, three families of regularization operators specifically designed to compensate for the instability factor in the inverse operator are defined.
Besides,
a choice rule is included for the regularization parameters that is based on the level
of data noise and the smoothness of the source to be identified.
It is shown that for the proposed parameter choice rule the methods are stable. A bound is obtained for the estimation errors  that turn out to be optimal since they are of H\"older type.
 
Several numerical examples of recovering sources that belong to different spaces of Hilbert are included.
It is observed that in all of them a good performance of the adopted regularization approaches is obtained.
 On the other hand, the sources recovered by means of the inverse operator are compared with those obtained
using the regularization operators and it is concluded that the proposed regularization methods offer greater precision in the estimation of the sources considered.

\end{document}